\newcommand*{\R}{{\mathbb R}}
\newcommand*{\N}{{\mathbb N}}
\newcommand*{\Z}{{\mathbb Z}}
\newcommand*{\eps}{\varepsilon}
\newcommand*{\Om}{\Omega}
\newcommand*{\pip}{\varphi}
\newcommand*{\barnu}{\overline{\nu}}
\newcommand*{\M}{\mathcal M}
\providecommand*{\vint}[1]{\mathchoice
          {\mathop{\vrule width 5pt height 3 pt depth -2.5pt
                  \kern -9pt \kern 1pt\intop}\nolimits_{\kern -5pt{#1}}}
          {\mathop{\vrule width 5pt height 3 pt depth -2.6pt
                  \kern -6pt \intop}\nolimits_{\kern -3pt{#1}}}
          {\mathop{\vrule width 5pt height 3 pt depth -2.6pt
                  \kern -6pt \intop}\nolimits_{\kern -3pt{#1}}}
          {\mathop{\vrule width 5pt height 3 pt depth -2.6pt
                  \kern -6pt \intop}\nolimits_{\kern -3pt{#1}}}}
\newcommand*{\jint}{\fint}
\DeclareMathOperator{\dist}{dist}
\DeclareMathOperator{\diam}{diam}
\DeclareMathOperator{\osc}{osc}
\numberwithin{equation}{section}
\theoremstyle{plain}
\newtheorem{definition}[equation]{Definition}
\newtheorem{thm}[equation]{Theorem}
\newtheorem{prop}[equation]{Proposition}
\newtheorem{cor}[equation]{Corollary}
\newtheorem{lem}[equation]{Lemma}
\DeclareMathOperator*{\elimsup}{ess\,lim\,sup}
\DeclareMathOperator{\rcapa}{cap}
\theoremstyle{definition}
\newtheorem{defn}[equation]{Definition}
\newtheorem{remark}[equation]{Remark}
\begin{document}

\title[Sharp H\"older exponent for the Neumann problem]
{Sharp H\"older regularity of weak solutions of the Neumann problem 
and applications to nonlocal PDE
{in  
 metric measure spaces}} 

\begin{abstract} We prove global  H\"older regularity result for weak solutions $u\in N^{1,p}(\Omega, \mu)$ to a PDE of $p$-Laplacian type with a measure as non-homogeneous term:
\[
-\text{div}\!\left( |\nabla u|^{p-2}\nabla u \right)=\overline\nu,
\]	
where $1<p<\infty$ and $\overline\nu \in (N^{1,p}(\Omega,\mu))^*$ is a signed Radon measure supported in $\overline \Omega$. Here, $\Omega$ is a John domain in a metric measure space
satisfying a doubling condition and a $p$-Poincar\'e inequality, and $\nabla u$ is the Cheeger gradient. The regularity results obtained in this paper improve on earlier estimates proved by the authors in \cite{CGKS} for the study of the Neumann problem, and have applications to the regularity of solutions of nonlocal PDE in doubling metric spaces. Moreover, the obtained H\"older exponent matches with the known sharp result in the Euclidean case \cite{CSt,BLS,BT}. 
\end{abstract}

\author{Luca Capogna}
\address{Department of Mathematical Sciences, Smith College, Northampton, MA, 01060, USA}
\email{lcapogna@smith.edu}
\author{Ryan Gibara}
\address{Department of Mathematics, Physics and Geology, Cape Breton University, Sydney, NS~B1Y3V3, Canada}
\email{ryan\_gibara@cbu.ca}
\author{Riikka Korte}
\address{Department of Mathematics and Systems Analysis, Aalto University, P.O.~Box 11100,
FI-00076 Aalto, Finland}
\email{riikka.korte@aalto.fi}
\author{Nageswari Shanmugalingam}
\address{Department of Mathematical Sciences, P.O.~Box 210025, University of Cincinnati, Cincinnati, OH~45221-0025, U.S.A.}
\email{shanmun@uc.edu}
\thanks{
 L.C.'s work is partially supported by the NSF (U.S.A.) grant DMS~\# 2348806;
 R.G.'s work is partially supported by the NSERC (Canada) grant RGPIN-2025-05905;
 R.K.'s work is partially supported by the Research Council of Finland grant 360184;
 N.S.'s work is partially supported by the NSF (U.S.A.) grant DMS~\#2348748.}
\maketitle

\noindent
    {\small \emph{Key words and phrases}: doubling metric measure
    space, John domain, fractional $p$-Laplacian, Neumann problem, H\"older regularity, Morrey spaces, signed measure data, Wiener criterion.
}

\medskip

\noindent
    {\small Mathematics Subject Classification (2020):
Primary: 30L99, 31E05, 35B65
Secondary: 35R11
}

\section{Introduction}

In \cite{CGKS, CKKSS}, the authors and their collaborators developed an extension of the approach of Caffarelli and 
Silvestre \cite{CS} for the study of nonlinear
non-local PDE to the setting of doubling metric spaces $(Z, d_Z,\nu_Z)$. 
The Caffarelli-Silvestre approach hinges on the idea that the solutions of certain non-local PDE in $\R^n$ can be realized as critical points of Besov energies, and that such Besov energies are comparable with the Dirichlet energy associated to a Neumann problem for a (local) PDE in $\R^n\times \R^+$. The extension to metric spaces of this idea makes use of hyperbolic fillings to define a metric measure space $(X,d,\mu)$, satisfying both the doubling condition and a Poincar\'e condition, that has $Z$ as its boundary. 
The papers~\cite{CGKS, CKKSS} go further and study nonlocal energies on $Z$ induced as a trace of a uniform domain
when $Z$ arises directly as the boundary of a uniform domain equipped with a doubling measure supporting a 
$p$-Poincar\'e inequality.
In \cite{CGKS,CKKSS} we have proved well-posedness for the Neumann problem in $X$, and inferred properties for the corresponding non-local differential equations on $Z$. 

One aspect of our work was the study of global regularity of 
weak solutions to the Neumann problem for the $p$-Laplacian operator in $X$. 
In the unweighted Euclidean setting the best possible regularity is $C^{1,\alpha}$-smoothness of weak solutions. 
Since,
in our generality the best possible smoothness is H\"older continuity (see \cite{KRS}),
we focused on H\"older regularity of weak solutions up to the boundary. 

Although the hypotheses on the Neumann data $f$ that were needed in \cite{CKKSS} are the same as the ones that arise from the work of Caffarelli and Stinga \cite{CSt} in the Euclidean setting, in the present paper we contribute a different (more general) approach and are able to improve on  the H\"older exponent itself. In particular, we establish H\"older regularity with an exponent that is sharp with respect to the membership of the Neumann data in a Morrey class.

\bigskip

\noindent{\bf Structure  hypotheses:}
Throughout the paper, we fix $1<p<\infty$ and assume that $\Om$ is a bounded domain in
a complete metric measure space $(X,d,\mu)$ such that:
\begin{enumerate}\label{structure hypotheses}
\item[(H0)] $\Omega$ is a John domain.
\item[(H1)] $(\overline \Om, d, \mu\vert_{\overline \Om})$ is doubling and supports a $p$-Poincar\'e inequality.
\item[(H2)]
The boundary $\partial \Omega$ is  complete and uniformly perfect.
Moreover, it
is  equipped with a Radon measure $\nu$ for which there are constants $C\ge 1$
and $0<\Theta<p$ such that for all $x\in \partial\Omega$
and $0<r<2\diam(\partial \Omega)$,
\begin{equation}\label{eq:Co-Dim-intro}
 \frac{1}{C}\, \frac{\mu(B(x,r)\cap\Om)}{r^\Theta}\le \nu(B(x,r)\cap\partial\Om)\le C\, \frac{\mu(B(x,r)\cap\Om)}{r^\Theta};
\end{equation}
that is, $\nu$ is a $\Theta$-codimensional Hausdorff measure with respect to $\mu\lvert_\Om$.
\end{enumerate}

Going forward, the ambient metric measure space $X$ plays no role, and so 
we may take $X=\overline\Om$, in which case every ball $B\subset X$ is automatically a subset of $\overline\Om$. Equivalently, considering $\overline\Om$ to be a subset of $X$, for $x\in \overline\Om$ and $r>0$, we shall interpret the notation $B(x,r)$ to mean $\{y\in\overline\Om:d(x,y)<r\}$. 

Throughout the paper we will assume that hypotheses (H0), (H1) and (H2) above hold.
The constants associated with the conditions (H0), (H1), and (H2), together with the exponent $p$, 
will be referred to as the structural constants. 

\begin{remark}\label{rem:uniform}
Since our main concern is regularity near the boundary, and the proofs are local in nature, 
our results also hold even when $\Om$ is unbounded (though only locally in that case), provided 
that $\Om$ is a uniform domain.
In the situation where $\Om$ is unbounded,
we should also replace the Newton-Sobolev space $N^{1,p}(\Om)$ with the Dirichlet space $D^{1,p}(\Om)$ (see \cite{CGKS} for more details).
\end{remark}

In our previous work \cite{CGKS, CKKSS}, we established the following global regularity result \cite[Theorem 1.6]{CKKSS} for weak solutions $u\in N^{1,p}(\Om)$ of the Neumann problem in $\Omega$, with Neumann boundary data $f\in L^{p'}(\partial \Omega, d\nu)$  (where $p':=p/(p-1)$ denotes the H\"older
conjugate of $p$):
\begin{equation}\label{eq:Neumann}
\int_{\overline{\Om}}|\nabla u|^{p-2}\, \nabla u\, \cdot\, \nabla v\, d\mu=\int_{\overline{\Om}} v\, f\, d\nu
\end{equation} 
for all $v\in N^{1,p}(\Omega)$. 

\begin{thm}[\protect{\cite[Theorem~1.6]{CKKSS}}]
Let $Q^\partial_\mu$ denote the lower mass bound exponent
associated with the doubling measure $\mu$ for balls centered at points in $\partial \Omega$, 
as defined in~\eqref{eq:lower-mass-exp-1}.
 Assume that $1<p\le Q^\partial_\mu$, and let $B_{R_0}$
be a ball of radius $R_0>0$ centered at a point in $\partial \Omega$. If the  boundary data 
satisfies the additional integrability assumption $f\in L^q(B_{2R_0}\cap \partial \Omega, d\nu)$ for some $q$ with
\begin{equation}\label{oldrange-q}
q_0:=\tfrac{Q_\mu^\partial-\Theta}{p-\Theta}<q \le \infty,
\end{equation}
then any 
solution of the Neumann problem $u$ is $\eps$-H\"{o}lder continuous in $B_{R_0}$ with
\begin{equation}\label{old-exponent}
\eps=\min\left\{ \tau, \left(1-\frac{\Theta}{p}\right)\left(1-\frac{q_0}{q}\right)\right\}
 =\min\left\{ \tau, \frac{q(p-\Theta)-Q_\mu^\partial+\Theta}{pq}\right\},
\end{equation}
where ${\tau}>0$ is the H\"older exponent for the interior regularity estimates 
established in~\cite[Theorem~5.2]{KinSh}.  
\end{thm}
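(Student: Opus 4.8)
\emph{Proof proposal.} The plan is to reduce the theorem to a scale-invariant Morrey-type decay for the Cheeger energy of $u$ on balls centered at $\dOm$, and then to apply a metric Morrey--Sobolev embedding. We may assume $p\le Q_\mu^\partial$, since otherwise the $p$-Poincar\'e inequality of (H1) already forces $u$ to be locally H\"older with the desired exponent. By the interior estimate \cite[Theorem~5.2]{KinSh}, $u$ is locally $\tau$-H\"older inside $\Om$ with quantitative control by its energy, so a chaining argument along John curves (this is where (H0) enters) reduces the theorem to proving that for every $x_0\in\dOm\cap B_{R_0}$ and all $0<\rho\le r\le R_0/2$,
\[
\int_{B(x_0,\rho)\cap\Om}|\nabla u|^p\,d\mu\ \lesssim\ \Bigl(\tfrac{\rho}{r}\Bigr)^{Q_\mu^\partial-p+p\eps}\int_{B(x_0,r)\cap\Om}|\nabla u|^p\,d\mu\ +\ \|f\|_{L^q(B_{2R_0}\cap\dOm,\,\nu)}^{p'}\,r^{\,Q_\mu^\partial-p+p\eps},
\]
since this is a Morrey condition at the mass-bound exponent $Q_\mu^\partial$, which the $p$-Poincar\'e inequality in (H1) then upgrades to $\eps$-H\"older continuity on $B_{R_0}$ (the exponent $\eps$ being the one identified below).

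To prove such a decay, fix $x_0\in\dOm\cap B_{R_0}$ and $0<r\le R_0/2$, and let $h$ be the comparison function: the minimizer of $w\mapsto\int_{B(x_0,r)\cap\Om}|\nabla w|^p\,d\mu$ over $w\in N^{1,p}(B(x_0,r)\cap\Om)$ with $w-u$ vanishing on $\partial B(x_0,r)\cap\Om$ and $w$ unconstrained on $B(x_0,r)\cap\dOm$ --- i.e.\ the solution of the homogeneous Neumann problem on the free part of the boundary. Because the test functions in \eqref{eq:Neumann} are unconstrained on $\dOm$, $h$ is a $p$-harmonic function on the ball $B(x_0,r)$ regarded inside the doubling PI space $(\overline\Om,d,\mu\lvert_{\overline\Om})$ of (H1); hence \cite[Theorem~5.2]{KinSh}, combined with a Caccioppoli inequality and Sobolev--Poincar\'e, gives the homogeneous decay
\[
\int_{B(x_0,\rho)\cap\Om}|\nabla h|^p\,d\mu\ \lesssim\ \Bigl(\tfrac{\rho}{r}\Bigr)^{Q_\mu^\partial-p+p\tau}\int_{B(x_0,r)\cap\Om}|\nabla h|^p\,d\mu\ \le\ \Bigl(\tfrac{\rho}{r}\Bigr)^{Q_\mu^\partial-p+p\tau}\int_{B(x_0,r)\cap\Om}|\nabla u|^p\,d\mu
\]
for $0<\rho\le r$, the last inequality because $u$ is a competitor for the minimization defining $h$.

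Next I would estimate the error $u-h$, which vanishes on $\partial B(x_0,r)\cap\Om$. Subtracting the weak formulation for $h$ from that for $u$ and testing with $u-h$, the monotonicity of the $p$-Laplacian --- with the customary split into the cases $p\ge2$ and $1<p<2$ --- bounds $\int_{B(x_0,r)\cap\Om}|\nabla(u-h)|^p\,d\mu$ by a power of $\bigl|\int_{B(x_0,r)\cap\dOm}(u-h)\,f\,d\nu\bigr|$ (with an extra energy factor when $1<p<2$). I would control this boundary integral by H\"older's inequality with exponent $q$, and then by a trace Sobolev--Friedrichs inequality on $B(x_0,r)\cap\Om$: since $u-h$ vanishes on $\partial B(x_0,r)\cap\Om$, its trace on $B(x_0,r)\cap\dOm$ obeys a Poincar\'e inequality, and converting between the $\nu$- and $\mu$-measures of balls via \eqref{eq:Co-Dim-intro} produces an explicit power of $r$. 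The hypothesis $q>q_0$ is exactly what makes the conjugate exponent $q'$ \emph{strictly} subcritical for this trace embedding, so the power of $r$ is strictly positive; absorbing $\int_{B(x_0,r)\cap\Om}|\nabla(u-h)|^p\,d\mu$ into the left side and tracking exponents --- using the lower mass bound at exponent $Q_\mu^\partial$ --- yields
\[
\int_{B(x_0,r)\cap\Om}|\nabla(u-h)|^p\,d\mu\ \lesssim\ \|f\|_{L^q(B(x_0,r)\cap\dOm,\,\nu)}^{p'}\;r^{\,Q_\mu^\partial-p+p(1-\Theta/p)(1-q_0/q)}.
\]

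Combining the previous two displays through $|\nabla u|\le|\nabla h|+|\nabla(u-h)|$ produces, for $0<\rho\le r\le R_0/2$,
\[
\int_{B(x_0,\rho)\cap\Om}|\nabla u|^p\,d\mu\ \lesssim\ \Bigl(\tfrac{\rho}{r}\Bigr)^{Q_\mu^\partial-p+p\tau}\int_{B(x_0,r)\cap\Om}|\nabla u|^p\,d\mu+\|f\|_{L^q(B_{2R_0}\cap\dOm,\,\nu)}^{p'}\,r^{\,Q_\mu^\partial-p+p(1-\Theta/p)(1-q_0/q)},
\]
and the standard Campanato--Giaquinta iteration lemma then upgrades this to the decay claimed in the first paragraph, with $\eps=\min\{\tau,\,(1-\Theta/p)(1-q_0/q)\}$ --- with no logarithmic loss, because $q>q_0$ keeps the forcing exponent strictly above the homogeneous one. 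The step I expect to be the main obstacle is the sharp trace Sobolev--Friedrichs inequality on $B(x_0,r)\cap\Om$, with the correct $r$-scaling and constants uniform in $r$ and in $x_0\in\dOm$: this is exactly where uniform perfectness of $\dOm$, the codimension condition (H2), and the John (or uniform) structure of $\Om$ must be combined, and obtaining the \emph{sharp} exponent of $r$ --- not merely a positive one --- is what delivers the sharp H\"older exponent $\eps$. A secondary point requiring care is verifying that the homogeneous-Neumann comparison function $h$ is genuinely covered by \cite[Theorem~5.2]{KinSh} by virtue of $(\overline\Om,d,\mu\lvert_{\overline\Om})$ being a doubling PI space, and then passing from its oscillation estimate to the energy-decay form used above.
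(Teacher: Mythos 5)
Your overall architecture is the right one, and it is essentially the same as the route the paper takes to prove its sharper Theorem~\ref{new regularity} (the cited \cite[Theorem~1.6]{CKKSS} is not reproved here; it is recovered and improved via Remark~\ref{rem:improvement}): replace $u$ on $B(x_0,r)$ by a $p$-harmonic comparison function in the metric space $(\overline\Om,d,\mu)$ (which, as you correctly observe, automatically satisfies a homogeneous Neumann condition on $\partial\Om\cap B(x_0,r)$), use the interior oscillation estimate of \cite{KinSh} to get a decay for the comparison energy, estimate the error by testing the two weak formulations against each other and bounding the resulting boundary term, and then iterate \`a~la Campanato--Giaquinta to obtain a Morrey estimate for $|\nabla u|^p$ and hence H\"older continuity. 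The paper's Lemma~\ref{lem:Estimate1}, Lemma~\ref{lem:Gia-Mak}, and Proposition~\ref{prop:Luca-Campanato} carry out exactly these steps; the paper avoids your case split on $p\gtrless 2$ by the algebraic rearrangement at the start of the proof of Lemma~\ref{lem:Estimate1}, and it replaces your trace Sobolev--Friedrichs argument by the Adams-type inequality of Lemma~\ref{lem:Adams t}, which is precisely the metric-space tool that supplies the sharp $r$-scaling you flag as ``the main obstacle.''

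There is, however, a genuine gap in the decay estimates you write down, and it is exactly the issue the paper's Lemma~\ref{lem:Gia-Mak} is designed to circumvent. Your homogeneous decay
\[
\int_{B(x_0,\rho)\cap\Om}|\nabla h|^p\,d\mu\ \lesssim\ \Bigl(\tfrac{\rho}{r}\Bigr)^{Q_\mu^\partial-p+p\tau}\int_{B(x_0,r)\cap\Om}|\nabla h|^p\,d\mu
\]
and your subsequent Morrey condition ``at the mass-bound exponent $Q_\mu^\partial$'' implicitly use the bound $\mu(B(x_0,\rho))/\mu(B(x_0,r))\lesssim(\rho/r)^{Q_\mu^\partial}$ in order to convert the averaged estimate of Lemma~\ref{lem:harm-energy-decay} into a non-averaged one. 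But \eqref{eq:lower-mass-exp-1} gives only the \emph{lower} mass bound $\mu(B(x_0,\rho))/\mu(B(x_0,r))\gtrsim(\rho/r)^{Q_\mu^\partial}$; the inequality you need goes the other way, and a doubling measure need not satisfy it at the exponent $Q_\mu^\partial$ (the paper emphasizes that $\mu$ is only doubling, not Ahlfors regular). Consequently, your decay at exponent $Q_\mu^\partial-p+p\tau$ is in general false, and the iteration as you have set it up does not close. The correct procedure, as in the paper, keeps the ratio $\mu(B(x_0,\rho))/\mu(B(x_0,r))$ explicit in the decay inequality (compare the hypothesis of Lemma~\ref{lem:Gia-Mak}, which is written with $\omega(r)=\mu(B(x_0,r))$ rather than powers of $r$) and then feeds the resulting \emph{averaged} Morrey bound $\jint_{B(x_0,r)}|\nabla u|^p\,d\mu\lesssim r^{p(\eps-1)}$ into the chaining argument of Proposition~\ref{prop:Luca-Campanato}; only after this normalization does the lower mass bound enter harmlessly. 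Related to the same point, your last step asserts that the $p$-Poincar\'e inequality ``upgrades'' your decay to H\"older continuity, but that inference actually requires Proposition~\ref{prop:Luca-Campanato} applied to the averaged Morrey quantity, not the raw power $r^{Q_\mu^\partial-p+p\eps}$. Once these measure ratios are tracked correctly, your trace Sobolev--Friedrichs step does yield the claimed exponent (with the sharper denominator $q(p-1)$ rather than $pq$, as Remark~\ref{rem:improvement} shows), so in fact you would prove more than \cite[Theorem~1.6]{CKKSS}.
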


For the analogue of the above regularity result in the case where $\Om$ is unbounded, see \cite[Theorem~1.10]{CGKS}. 

In the present paper we improve on this regularity result, providing a better H\"older exponent that is sharp with respect to the hypotheses we require from the Neumann data $f$,
see Remark~\ref{rem:improvement}.
In particular, we consider membership of the Neumann data in an appropriate Morrey space, see Definition~\ref{def:Morrey}. Our main result is Theorem~\ref{new regularity}, given next.
The first part of this theorem follows from Proposition~\ref{prop:Luca-Campanato} and from 
Corollary~\ref{Cor: Morrey type estimate},
while the  second part of the theorem is based on the earlier work 
in~\cite[Lemma~4.8]{BMcSh} (see also \cite{Mak-Holder}). 

We continue to assume the structural hypotheses mentioned above.

\begin{thm}\label{new regularity} 
Let $u\in N^{1,p}(\Omega,\mu)$ be a weak solution 
to~\eqref{eq:Neumann} with $f\in L^{p'}(\partial\Om, \nu)$. 

\begin{enumerate}
\item[1)] If $f\in M^{1,-(\alpha+\Theta)}(\partial\Om,\nu)$ and $-p<\alpha<-p(1-\tau) -\tau$, then $u$ is H\"older continuous in $\overline \Omega$ with H\"older exponent $\frac{p+\alpha}{p-1}\in (0,1)$.
\item[2)] If $f$ does not change sign in a ball $B_{4R}$ centered at a boundary point, and $u$ is H\"older continuous on $\overline{\Omega}\cap B_{4R}$ with H\"older exponent $\lambda$ such that $0<\lambda<\tfrac{p-\Theta}{p-1}$, then 
$f\in M^{1, -(\lambda(p-1) -p+\Theta)}(\partial\Om \cap B_R,\nu)$. 
\end{enumerate}
\end{thm}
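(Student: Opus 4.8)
The plan is to prove the two parts separately, treating part (1) as the assertion that a Morrey-space control of the Neumann datum $f$ upgrades the already-known H\"older continuity of $u$ to the sharp exponent $\tfrac{p+\alpha}{p-1}$, and part (2) as the converse: a sign-definite datum whose solution is H\"older continuous must itself lie in the corresponding Morrey space. For part (1), I would first recall that under the structural hypotheses and $f\in L^{p'}(\partial\Om,\nu)$, the earlier work guarantees that $u$ is H\"older continuous in $\overline\Om$ with \emph{some} exponent $\tau>0$ (the interior exponent from \cite{KinSh}); the point is to bootstrap. Fix a boundary ball $B_r=B(x_0,r)$. Test the weak formulation \eqref{eq:Neumann} with $v=(u-u_{B_r})\eta^p$ for a cutoff $\eta$ supported in $B_{2r}$ with $\eta\equiv 1$ on $B_r$ and $|\nabla\eta|\lesssim 1/r$. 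A Caccioppoli-type manipulation gives
\[
\int_{B_r}|\nabla u|^p\,d\mu
\;\lesssim\; \frac{1}{r^p}\int_{B_{2r}}|u-u_{B_r}|^p\,d\mu
\;+\;\int_{B_{2r}\cap\partial\Om}|u-u_{B_r}|\,|f|\,d\nu .
\]
The first term on the right is estimated by the H\"older continuity of $u$ (with the current exponent) and the doubling property, yielding a bound of order $r^{\,(\text{exponent})\cdot p}\,\mu(B_r)/r^p$. For the boundary term, I would use the oscillation bound $|u-u_{B_r}|\lesssim r^{\lambda}$ on $B_{2r}$ together with the Morrey hypothesis $f\in M^{1,-(\alpha+\Theta)}(\partial\Om,\nu)$, which by Definition~\ref{def:Morrey} means $\int_{B\cap\partial\Om}|f|\,d\nu\lesssim r^{\,\alpha+\Theta}\nu(B\cap\partial\Om)\approx r^{\alpha+\Theta}\cdot r^{-\Theta}\mu(B\cap\Om)=r^{\alpha}\mu(B)$ by the codimension relation \eqref{eq:Co-Dim-intro}. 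Combining, the boundary term is controlled by $r^{\lambda+\alpha}\mu(B_r)$, which — once $\lambda$ is chosen so that $\lambda p \ge \lambda+\alpha+\text{(gap)}$, i.e. in the regime $-p<\alpha<-p(1-\tau)-\tau$ — dominates, so that $\fint_{B_r}|\nabla u|^p\,d\mu \lesssim r^{\alpha+p-p}\cdot(\dots)$. Feeding this gradient Morrey decay into Proposition~\ref{prop:Luca-Campanato} and Corollary~\ref{Cor: Morrey type estimate} (the Morrey/Campanato machinery already set up in the paper) converts the decay exponent into the H\"older exponent $\tfrac{p+\alpha}{p-1}$; the condition $\alpha<-p(1-\tau)-\tau$ is exactly what makes $\tfrac{p+\alpha}{p-1}<1$ and also guarantees the bootstrap is not obstructed by the ceiling $\tau$.

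For part (2), I would argue in the opposite direction. Assume $f\ge 0$ (the case $f\le 0$ is identical) on $B_{4R}$ and that $u$ is $\lambda$-H\"older on $\overline\Om\cap B_{4R}$. For a boundary ball $B_r\subset B_R$, I need to bound $\int_{B_r\cap\partial\Om}f\,d\nu$ by $r^{-(\lambda(p-1)-p+\Theta)}\nu(B_r\cap\partial\Om)\approx r^{-\lambda(p-1)+p-\Theta}\cdot r^{-\Theta}\mu(B_r)$, i.e. by $r^{\,p-\lambda(p-1)}\cdot r^{-2\Theta}\mu(B_r)$ — let me instead phrase it directly: I must show $\int_{B_r\cap\partial\Om}f\,d\nu\lesssim r^{\,p-1-\lambda(p-1)}\,r^{-1}\mu(B_r)/r^{\Theta}\cdot r^{\Theta}$; the clean statement is $\int_{B_r\cap\partial\Om}f\,d\nu\lesssim r^{-(\lambda(p-1)-p+\Theta)}\nu(B_r\cap\partial\Om)$. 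To get this, test \eqref{eq:Neumann} with a nonnegative $v$ that equals $1$ on $B_r\cap\partial\Om$, is supported in $B_{2r}$, and satisfies $|\nabla v|\lesssim 1/r$; since $f\ge0$, the right side is $\int_{B_r\cap\partial\Om}f\,d\nu$ up to a harmless lower-order contribution from $B_{2r}\setminus B_r$. The left side is bounded by $\bigl(\fint_{B_{2r}}|\nabla u|^p\,d\mu\bigr)^{(p-1)/p}\mu(B_{2r})^{(p-1)/p}\cdot\|\nabla v\|_{L^p}\le \bigl(\fint_{B_{2r}}|\nabla u|^p\bigr)^{(p-1)/p}\cdot\tfrac{1}{r}\mu(B_{2r})$. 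Now the $\lambda$-H\"older continuity of $u$, via a Caccioppoli estimate at scale $2r$ (as in part (1) but now \emph{using} the conclusion to bound the gradient) gives $\fint_{B_{2r}}|\nabla u|^p\,d\mu\lesssim r^{\lambda p - p}$, hence the left side is $\lesssim r^{(\lambda-1)(p-1)}\cdot r^{-1}\mu(B_r)= r^{\lambda(p-1)-p}\mu(B_r)$. Converting $\mu(B_r)$ to $\nu$ via \eqref{eq:Co-Dim-intro} ($\mu(B_r)\approx r^{\Theta}\nu(B_r\cap\partial\Om)$) yields $\int_{B_r\cap\partial\Om}f\,d\nu\lesssim r^{\lambda(p-1)-p+\Theta}\nu(B_r\cap\partial\Om)$, which is precisely membership in $M^{1,-(\lambda(p-1)-p+\Theta)}(\partial\Om\cap B_R,\nu)$. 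The constraint $\lambda<\tfrac{p-\Theta}{p-1}$ is what keeps the Morrey exponent $-(\lambda(p-1)-p+\Theta)$ positive, so that this is a genuine (nontrivial) Morrey space. This step can also be read off from \cite[Lemma~4.8]{BMcSh}, as the excerpt indicates, so the main task here is simply to verify that the sign condition lets one replace the oscillation-weighted test function by a plain cutoff.

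The main obstacle I anticipate is not in either computation individually but in making the exponent arithmetic in part (1) airtight: one must check that the Caccioppoli term involving $|u-u_{B_r}|^p/r^p$, which only carries exponent $\lambda p$, does not spoil the sharp decay $r^{\alpha}$ coming from the datum — this is exactly why the hypothesis is an \emph{open} interval $-p<\alpha<-p(1-\tau)-\tau$ rather than a closed one, and why the iteration must be run so that the current H\"older exponent $\lambda$ is pushed up in finitely many steps from $\tau$ toward $\tfrac{p+\alpha}{p-1}$, with the gradient-Morrey feedback (Corollary~\ref{Cor: Morrey type estimate}) absorbing the Caccioppoli loss at each stage. A secondary delicate point is the uniform-perfectness and codimension hypothesis (H2): it is needed to pass back and forth between $\mu$-mass and $\nu$-mass of boundary balls with constants independent of $r$, and to ensure the boundary trace of the test function behaves well; I would invoke (H1)–(H2) and the trace theory from \cite{CGKS} rather than reprove it.
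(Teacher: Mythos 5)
Your plan for part (1) has a genuine gap: the Caccioppoli-type iteration with the test function $v=(u-u_{B_r})\eta^p$ does not improve the H\"older exponent, and in fact cannot. If $u$ is currently $\lambda$-H\"older, your estimate reads (schematically)
\[
\fint_{B_r}|\nabla u|^p\,d\mu \;\lesssim\; r^{\lambda p-p} \;+\; r^{\lambda+\alpha},
\]
and in the regime $\lambda<\tfrac{p+\alpha}{p-1}$ (the only one relevant before convergence) the dominant term is $r^{\lambda p-p}$. Feeding this into the Campanato machinery of Proposition~\ref{prop:Luca-Campanato} returns the Morrey exponent $\gamma=1-\lambda$, hence $(1-\gamma)=\lambda$-H\"older continuity --- exactly what was assumed. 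The loop is a fixed point, not a bootstrap, and no finite number of passes will move $\lambda$. Moreover, the picture you describe, ``pushing $\lambda$ up from $\tau$ toward $\tfrac{p+\alpha}{p-1}$'', has the inequality reversed: the hypothesis $\alpha<-p(1-\tau)-\tau$ is precisely $\tfrac{p+\alpha}{p-1}<\tau$, so the target exponent is \emph{below} the interior one. The real obstruction is at the boundary, and the engine that overcomes it in the paper is not Caccioppoli alone but a \emph{harmonic replacement}: one compares $u$ on $B(x_0,R)$ with the Cheeger $p$-harmonic function $v$ having the same boundary values, uses the superior interior decay $\fint_{B_r}|\nabla v|^p\lesssim (r/R)^{\tau p-p}\fint_{B_R}|\nabla v|^p$ from Lemma~\ref{lem:harm-energy-decay}, and controls the error $\int(u-v)\,d\overline\nu$ by the Adams inequality of Lemma~\ref{lem:Adams t}. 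That produces the key inequality of Lemma~\ref{lem:Estimate1}, with the excellent exponent $\tau p - p$ on the shrinking-ball term and the datum contribution at the order $R^{\kappa p'}$, after which the Giaquinta-type iteration of Lemma~\ref{lem:Gia-Mak} closes the argument (and this is exactly where the open condition $\tau p - p > \kappa p'$, equivalently $\alpha<-p(1-\tau)-\tau$, is needed). Without the harmonic comparison your decomposition cannot access the exponent $\tau p-p$.

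For part (2), your primary plan (test with a cutoff, then bound $\fint_{B_{2r}}|\nabla u|^p$ by Caccioppoli) has a circularity that you gloss over: the Caccioppoli inequality for $\nabla u$ itself carries a boundary term $\int_{B_{4r}\cap\partial\Om}|u-u_{B_{2r}}|\,|f|\,d\nu$, which is again an integral of $f$ at a comparable scale, so you end up bounding $\int_{B_r}f\,d\nu$ by a power of $\int_{B_{4r}}f\,d\nu$. This is in principle salvageable --- the recursion is sublinear in $\Phi(r):=\int_{B_r}f\,d\nu/\bigl(r^{\lambda(p-1)-p}\mu(B_r)\bigr)$ because the power appearing is $(p-1)/p<1$, so a dyadic iteration starting from the $L^{p'}$ bound at scale $R_0$ can absorb it --- but this must be spelled out; as written the step is unjustified. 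The paper instead applies~\cite[Lemma~4.8]{BMcSh} directly: since $f\ge 0$ makes $u$ $p$-superharmonic in $B_{4R}$, that lemma bounds the mass $\tfrac{r^p}{\mu(B_r)}\int_{B_r}|f|\,d\nu$ by $(\osc_{B_{2r}}u)^{p-1}$ with no reference to $\nabla u$, which kills the circularity at once. You do flag this lemma as the expected route, so the second part is closer in spirit; the substantive missing idea is in part (1).
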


To compare the two parts of the above theorem, in the second part of the theorem we set $\alpha:=(p-1)\lambda-p$,
and note then that $\tfrac{p+\alpha}{p-1}=\lambda$ and 
$\lambda(p-1)-p-\Theta=\alpha+\Theta$, and so the two Morrey spaces coincide with that choice of
$\alpha$.

In the second part of the above theorem, in the case where $\lambda>1$, the result becomes trivial as then $u$ is constant and then the Neumann data of a constant function is zero. 

The first part of Theorem~\ref{new regularity}  is a consequence of
Theorem~\ref{Morrey type estimate}
related to weak  solutions $u\in N^{1,p}(\Omega, \mu)$ of the more general equation given by
\begin{equation}\label{eq:Neumann-bar}
\int_\Omega|\nabla u|^{p-2}\, \nabla u\, \cdot\, \nabla v\, d\mu=\int_\Omega v\, d{\overline\nu}
\end{equation}
for all $v\in N^{1,p}(\Omega)$, where $\overline{\nu}$ is a signed Radon measure 
on $\overline{\Om}$ 
with 
$\barnu \in (N^{1,p}(\Om, d\mu))^*$  such that its total variation 
$|\overline{\nu}|=\barnu^+ + \barnu^-$ satisfies $\overline{\nu}(\overline{\Om})=0$ and
\begin{equation} \label{eq:nu-f-growth}
\frac{ |\overline\nu|(B(x,r))}{\mu(B(x,r))}\le M\, r^\alpha
\end{equation}
for some $\alpha<0$ and $M>0$, and for all $x\in \partial\Om$ and $0<r\le R_0$.
Here, $\barnu=\barnu^+-\barnu^-$ is the Hahn decomposition of the signed measure $\barnu$.

{ As before, we denote by $\tau\in (0,1)$ the H\"older exponent for the interior regularity estimates
established in~\cite[Theorem~5.2]{KinSh}.}

\begin{thm}\label{Morrey type estimate}
Let $z_0\in\overline\Om$, $R>0$, 
and let $u\in N^{1,p}(B(z_0,2R))$ satisfy equation~\eqref{eq:Neumann-bar} 
for all
$v\in N^{1,p}(B(z_0,2R))$ with compact support contained in $B(z_0,2R)$,
and suppose that  $|\overline{\nu}|$ satisfies~\eqref{eq:nu-f-growth}. 
If  $-p<\alpha<-p(1-\tau) -\tau$,
then $|\nabla u| \in M^{p, \frac{1+\alpha}{1-p}}(B(z_0,R))$, and consequently,
$u$ is locally $\tfrac{p+\alpha}{p-1}$-H\"older continuous in $B(z_0,R/2)$.
\end{thm}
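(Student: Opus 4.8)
The plan is to establish a Morrey-type decay estimate for $\int_{B(z_0,r)}|\nabla u|^p\,d\mu$ as $r\to 0$, and then invoke the known embedding of such Morrey spaces into Hölder spaces (the equivalence with Campanato-type oscillation bounds), which in this metric setting is exactly Proposition~\ref{prop:Luca-Campanato}. So the heart of the matter is the gradient estimate. The natural route is a \emph{comparison argument}: fix a ball $B=B(x,r)$ with $x\in\overline\Om$ and $2r\le$ (some fixed scale), and let $h$ be the $p$-harmonic function in $B$ (solving the homogeneous equation with the same boundary values as $u$ on $\partial B$, i.e. $h-u\in N^{1,p}_0(B)$). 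For $h$ one has the interior/boundary Hölder estimate from \cite[Theorem~5.2]{KinSh} with exponent $\tau$, which upgrades to the standard Morrey decay
\[
\int_{B(x,\rho)}|\nabla h|^p\,d\mu \;\lesssim\; \Bigl(\frac{\rho}{r}\Bigr)^{p\tau}\int_{B(x,r)}|\nabla h|^p\,d\mu
\qquad (0<\rho\le r),
\]
via Caccioppoli applied to $h-h(x)$ together with the $p$-Poincaré inequality and doubling — this is where (H0), (H1), (H2) and the interior regularity of \cite{KinSh} enter.

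Next I would control the error $\int_B|\nabla(u-h)|^p\,d\mu$. Testing the equation for $u$ and the equation for $h$ against $v=u-h\in N^{1,p}_0(B)$ and subtracting, the left side becomes $\int_B(|\nabla u|^{p-2}\nabla u-|\nabla h|^{p-2}\nabla h)\cdot\nabla(u-h)\,d\mu$, which by the standard monotonicity (vector) inequalities is bounded below by $c\int_B|\nabla(u-h)|^p\,d\mu$ when $p\ge 2$ (and by $c\int_B|\nabla(u-h)|^2(|\nabla u|+|\nabla h|)^{p-2}\,d\mu$ when $1<p<2$, which then has to be handled by Hölder's inequality in the usual way). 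The right side is $\int_B (u-h)\,d\overline\nu$, which we bound using $\overline\nu(\overline\Om)=0$ to replace $u-h$ by $u-h-(u-h)_B$, then estimate $\|u-h\|_{L^\infty(B)}$ or an averaged version by the Sobolev/Poincaré inequality in terms of $\bigl(\mu(B)^{-1}\int_B|\nabla(u-h)|^p\,d\mu\bigr)^{1/p}\cdot r$, and finally use the growth hypothesis \eqref{eq:nu-f-growth}, $|\overline\nu|(B)\le M\,r^\alpha\mu(B)$. Absorbing one factor of $\|\nabla(u-h)\|_{L^p(B)}$ yields a bound of the form
\[
\int_B|\nabla(u-h)|^p\,d\mu \;\lesssim\; M^{p'}\,r^{(1+\alpha)p'}\,\mu(B)\quad(\text{schematically, after the absorption}),
\]
with the appropriate modification when $1<p<2$.

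Combining these two pieces by the triangle inequality,
\[
\int_{B(x,\rho)}|\nabla u|^p\,d\mu \;\lesssim\; \Bigl(\frac{\rho}{r}\Bigr)^{p\tau}\int_{B(x,r)}|\nabla u|^p\,d\mu \;+\; C\,r^{(1+\alpha)p'}\mu(B(x,r)),
\]
and since (H1)--(H2) give the doubling/volume-growth bound $\mu(B(x,r))\lesssim r^{s}$ for a suitable exponent, the inhomogeneous term has the right power of $r$. I would then apply the standard iteration lemma for such inequalities (Giaquinta-type) to conclude
\[
\int_{B(x,\rho)}|\nabla u|^p\,d\mu \;\lesssim\; \rho^{\,\sigma}\qquad\text{with}\qquad \sigma = \min\{p\tau,\ (1+\alpha)p'\}+(\text{volume exponent}),
\]
and the hypothesis $-p<\alpha<-p(1-\tau)-\tau$ is precisely what makes $(1+\alpha)p'<p\tau$ (equivalently $\tfrac{p+\alpha}{p-1}<\tau$) \emph{and} keeps $\tfrac{p+\alpha}{p-1}\in(0,1)$, so the minimum is realized by the $\alpha$-term and translates, via Proposition~\ref{prop:Luca-Campanato}, into exactly $|\nabla u|\in M^{p,\frac{1+\alpha}{1-p}}$ and local $\tfrac{p+\alpha}{p-1}$-Hölder continuity of $u$ on $B(z_0,R/2)$.

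The main obstacle I expect is the comparison/error estimate when $1<p<2$: the degenerate monotonicity inequality forces an extra weight $(|\nabla u|+|\nabla h|)^{p-2}$, and recovering a clean $\int_B|\nabla(u-h)|^p$ bound requires a careful Hölder-inequality interpolation together with an a priori (or bootstrapped) bound on $\int_B(|\nabla u|^p+|\nabla h|^p)$, which must be fed back into the iteration without losing the sharp exponent. A secondary technical point is making the duality pairing $\int_B(u-h)\,d\overline\nu$ rigorous — one needs $u-h$ (or its Lebesgue representative, or a mollified version) to be admissible against $|\overline\nu|$, which is where the hypothesis $\overline\nu\in(N^{1,p}(\Om))^*$ and the estimate \eqref{eq:nu-f-growth} are used in tandem with the Sobolev embedding for the ball $B$.
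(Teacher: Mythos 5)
Your overall strategy---compare $u$ to the $p$-harmonic replacement $v$ in a ball, derive a decay estimate for $\int |\nabla u|^p$, and iterate à la Giaquinta---is indeed the skeleton of the paper's proof, and you correctly flag the $1<p<2$ monotonicity issue as the main obstacle. But there are two genuine gaps, and the paper resolves both in ways your outline would not reach.

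First, the paper never uses the quantitative lower bound in the monotonicity inequality, and therefore never splits into the cases $p\ge 2$ and $1<p<2$. Instead of writing $\int_{B_r}|\nabla(u-v)|^p$ on the left-hand side (which is where the $(|\nabla u|+|\nabla v|)^{p-2}$ weight would appear for $1<p<2$), it starts from the algebraic identity
\[
\int_{B_r}|\nabla u|^p
=\int_{B_r}\bigl(|\nabla u|^{p-2}\nabla u-|\nabla v|^{p-2}\nabla v\bigr)\cdot(\nabla u-\nabla v)
+\int_{B_r}|\nabla u|^{p-2}\nabla u\cdot\nabla v
+\int_{B_r}|\nabla v|^{p-2}\nabla v\cdot(\nabla u-\nabla v),
\]
uses the \emph{nonnegativity} of the first integrand (the only input from monotonicity) to enlarge the domain of integration of that term from $B_r$ to $B_R$, and then kills the $v$-part on $B_R$ by $p$-harmonicity and converts the $u$-part into $\int_{B_R}(u-v)\,d\barnu$ by the weak equation. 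The remaining two terms on $B_r$ are treated by Young's inequality and Lemma~\ref{lem:harm-energy-decay}. This is a cleaner route that bypasses your flagged obstacle entirely; your version would require a nontrivial interpolation argument in the subquadratic range that you correctly suspect is ``careful'' but do not supply.

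Second, and more seriously, your estimate of the pairing $\int_B (u-v)\,d\barnu$ via $\|u-v\|_{L^\infty(B)}$ or a Sobolev/Poincar\'e inequality does not close. The sup-norm Morrey embedding requires $p>Q_\mu$, which is not assumed (the interesting regime in the paper's setup is $p\le Q_\mu^\partial$). And the ordinary Sobolev/Poincar\'e inequalities embed into $L^q(\mu)$, not $L^q(|\barnu|)$; these two measures are mutually singular in general since $\barnu$ lives on $\partial\Om$ and $\mu(\partial\Om)=0$, so no amount of H\"older's inequality against $\mu$ controls the $\barnu$-integral. The paper instead invokes a Maz'ya/Adams-type capacitary inequality (Lemma~\ref{lem:Adams t}), which is exactly the tool that embeds $N^{1,p}_0(B)$ into $L^{p^*}(d|\barnu|)$ with a constant quantified by the growth hypothesis~\eqref{eq:nu-f-growth}. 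This is the key analytic ingredient you are missing. (A smaller point: your decay bound for $\nabla v$ should carry a volume ratio $\mu(B_\rho)/\mu(B_r)$ alongside $(\rho/r)^{\tau p-p}$, and the iteration lemma must be the version for non-Ahlfors-regular $\mu$ as in Lemma~\ref{lem:Gia-Mak}, not the classical one which presupposes $\mu(B_r)\approx r^s$ with matching upper and lower bounds.)
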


Theorem~\ref{Morrey type estimate} will be proved in Section \ref{proofs}. 
As an immediate consequence of this theorem, we obtain the following. 

\begin{cor}\label{Cor: Morrey type estimate}
Let $u\in N^{1,p}(\Omega,\mu)$ be a weak solution to \eqref{eq:Neumann} with 
$f\in L^{p'}(\partial\Om, \nu)\cap M^{1,-(\alpha+\Theta)}(\partial\Om,\nu)$.
Then $|\nabla u| \in M^{p, \frac{1+\alpha}{1-p}}(\overline{\Om})$ 
and so $u$ is $\tfrac{p+\alpha}{p-1}$-H\"older continuous in $\overline\Om$
whenever
$-p<\alpha<-p(1-\tau) -\tau$.
\end{cor}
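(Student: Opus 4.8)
The plan is to deduce the corollary from Theorem \ref{Morrey type estimate} by verifying that the Neumann problem \eqref{eq:Neumann} is a special case of the general equation \eqref{eq:Neumann-bar}, with a signed measure $\barnu$ whose total variation obeys the growth bound \eqref{eq:nu-f-growth} with the prescribed $\alpha$. First I would fix a boundary ball $B_{R_0}$ and set $\barnu := f\,\nu$ (extended by zero outside $\partial\Om$), noting that $f\in L^{p'}(\partial\Om,\nu)$ guarantees $\barnu\in (N^{1,p}(\Om,d\mu))^*$ by the trace/Hölder-inequality bounds already used in \cite{CGKS,CKKSS} — this is exactly the pairing on the right-hand side of \eqref{eq:Neumann}. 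Thus any weak solution $u$ of \eqref{eq:Neumann} solves \eqref{eq:Neumann-bar} locally in every $B(z_0,2R)\ssub\overline\Om$ centered at a boundary point (for interior balls, $\barnu=0$ on the ball and the conclusion is just the interior estimate of \cite[Theorem~5.2]{KinSh}, which already gives $|\nabla u|$ in the relevant Morrey class locally).

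The heart of the matter is the growth estimate on $|\barnu|=|f|\,\nu$. Here I would invoke the hypothesis $f\in M^{1,-(\alpha+\Theta)}(\partial\Om,\nu)$: by Definition \ref{def:Morrey}, for all boundary points $x$ and $0<r\le R_0$,
\[
\int_{B(x,r)\cap\partial\Om}|f|\,d\nu \;\le\; C\,\nu(B(x,r)\cap\partial\Om)\,r^{-(\alpha+\Theta)}.
\]
Combining this with the codimension condition \eqref{eq:Co-Dim-intro}, which gives $\nu(B(x,r)\cap\partial\Om)\le C\,\mu(B(x,r)\cap\Om)\,r^{-\Theta}$, one obtains
\[
\frac{|\barnu|(B(x,r))}{\mu(B(x,r))}
=\frac{\int_{B(x,r)\cap\partial\Om}|f|\,d\nu}{\mu(B(x,r))}
\le C\,\frac{\mu(B(x,r)\cap\Om)\,r^{-\Theta}\,r^{-(\alpha+\Theta)}}{\mu(B(x,r))}\cdot r^{\Theta}
= C\,r^{\alpha},
\]
after using $\mu(B(x,r)\cap\Om)\le\mu(B(x,r))$ and simplifying the powers of $r$; so \eqref{eq:nu-f-growth} holds with this $\alpha$ and some $M=M(C)$. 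I would also check the normalization $\barnu(\overline\Om)=0$: testing \eqref{eq:Neumann} with $v\equiv 1$ forces $\int_{\partial\Om}f\,d\nu=0$, which is precisely $\barnu(\overline\Om)=0$ (and is in any case a standard compatibility condition for the Neumann problem).

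With these verifications in hand, Theorem \ref{Morrey type estimate} applies on a covering of $\overline\Om$ by balls $B(z_0,R)$ — interior ones handled by \cite[Theorem~5.2]{KinSh} and boundary ones by Theorem \ref{Morrey type estimate} — and yields $|\nabla u|\in M^{p,\frac{1+\alpha}{1-p}}(\overline\Om)$ together with local $\tfrac{p+\alpha}{p-1}$-Hölder continuity; since $\overline\Om$ is compact (or, in the uniform case, by Remark \ref{rem:uniform}, the estimate is local), local Hölder continuity upgrades to Hölder continuity on all of $\overline\Om$, using the John/quasiconvexity property (H0) to chain the local oscillation estimates along John curves. The only mildly delicate point is the passage from the local Morrey/Hölder bounds of Theorem \ref{Morrey type estimate} to a \emph{global} statement on $\overline\Om$: this requires controlling the oscillation of $u$ across the whole domain, which is where the John condition (H0) and the boundedness of $\Om$ enter, together with a standard Campanato-type chaining argument already implicit in Proposition \ref{prop:Luca-Campanato}. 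Everything else is bookkeeping with the two-sided bounds \eqref{eq:Co-Dim-intro} and the Morrey definition.
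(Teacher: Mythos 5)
Your overall reduction is exactly what the paper does: set $\barnu=f\,\nu$, check the compatibility $\barnu(\overline\Om)=0$ (the paper does not spell this out, but your test with $v\equiv 1$ is the right justification), verify the growth condition~\eqref{eq:nu-f-growth}, and apply Theorem~\ref{Morrey type estimate}. The paper routes the growth-condition verification through Lemma~\ref{membership-Morrey}, which is nothing but the calculation you try to do by hand. So the structure of your argument matches the intended proof.

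However, the actual computation has a sign error that you then ``fix'' with an incorrect algebraic step, so the displayed estimate is wrong as written. Membership in $M^{1,-(\alpha+\Theta)}(\partial\Om,\nu)$ (Definition~\ref{def:Morrey} with $s=1$, $\lambda=-(\alpha+\Theta)$) says
\[
r^{-(\alpha+\Theta)}\,\jint_{B(x,r)}|f|\,d\nu\le C,
\qquad\text{i.e.}\qquad
\int_{B(x,r)}|f|\,d\nu\le C\,\nu(B(x,r))\,r^{\alpha+\Theta},
\]
whereas you wrote the right-hand side with $r^{-(\alpha+\Theta)}$. Then in your chain of inequalities you insert an unexplained factor of $r^{\Theta}$ and assert the final power is $r^{\alpha}$; in fact $r^{-\Theta}\cdot r^{-(\alpha+\Theta)}\cdot r^{\Theta}=r^{-\alpha-\Theta}\neq r^{\alpha}$ unless $\alpha=-\Theta/2$. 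With the sign corrected, the computation is clean and needs no extra factor:
\[
\frac{\int_{B(x,r)}|f|\,d\nu}{\mu(B(x,r))}
\le C\,\frac{\nu(B(x,r))\,r^{\alpha+\Theta}}{\mu(B(x,r))}
\le C\,\frac{\mu(B(x,r))\,r^{-\Theta}\,r^{\alpha+\Theta}}{\mu(B(x,r))}
= C\,r^{\alpha},
\]
which is precisely \eqref{eq:nu-f-growth}. This is the content of (the implication (1)$\Rightarrow$(3) in) Lemma~\ref{membership-Morrey}, and you should either fix the arithmetic or simply cite that lemma as the paper does. Finally, the globalization worry at the end is heavier machinery than needed: $\overline\Om$ is compact, $u$ is continuous and hence bounded, and a uniform local $\beta$-H\"older estimate at scale $r_0$ upgrades to a global one by the trivial bound $|u(x)-u(y)|\le 2\|u\|_\infty\le (2\|u\|_\infty/r_0^{\beta})\,d(x,y)^{\beta}$ when $d(x,y)\ge r_0$; no John-curve chaining is required beyond what is already inside Proposition~\ref{prop:Luca-Campanato}.
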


\begin{remark}
If $f\in L^{p'}(\partial\Om, \nu)\cap M^{1,-(\alpha+\Theta)}(\partial\Om,\nu)$ for some $\alpha\geq -p(1-\tau)-\tau$, 
then necessarily $f\in M^{1,-(\beta+\Theta)}(\partial\Om,\nu)$ for all $\beta<\alpha$, and so by choosing 
$\beta<-p(1-\tau)-\tau$ appropriately, we obtain that $u$ is $(\tau-\iota)$-H\"older continuous in 
$\overline{\Om}$ for any $\iota>0$. However, our proof still does not yield that $u$ is 
$\tau$-H\"older continuous up to the boundary.
\end{remark}

The first instance in the literature, to our knowledge, that addresses signed Radon measures as 
non-homogeneous data for the $p$-Laplacian is the work of Ono~\cite{Ono} in the Euclidean setting. 
Our Theorem~\ref{Morrey type estimate} extends to the setting of metric measure spaces the 
work in~\cite{Ono}. The challenge here is that instead of the Ahlfors regularity of Lebesgue measure considered in~\cite{Ono},
we have to contend with knowing only that the measure $\mu$ is doubling.
For more regular measures $\overline\nu$ in the Euclidean setting, corresponding to $\alpha>-1$, 
the work of \cite{DZ} yields gradient estimates for which there are counter-examples in our more general setting, see \cite{KRS}. 
As we are aiming for lower-order regularity, our hypotheses allow for measures that are significantly more singular. 

\begin{remark}\label{rem:improvement}
To better appreciate these results,  we turn our attention briefly to the restriction placed on $f$ 
in~\cite{CGKS, CKKSS} in obtaining H\"older continuity of solutions and show that such hypotheses, 
coupled with the results in the present paper, lead to a sharper H\"older regularity exponent for the weak 
solutions of the Neumann problem.

If $f\in L^q(\partial\Om,\nu)$, then
 $f$ is in the Morrey space $M^{1,-(\alpha+\Theta)}(\partial\Om,\nu)$ for a suitable $\alpha$ (see Definition~\ref{def:Morrey})
 and so Corollary~\ref{Cor: Morrey type estimate} follows from Theorem~\ref{Morrey type estimate} above.
Indeed, by H\"older's inequality, for $x\in\partial\Om$ and $r>0$, we have
\begin{align*}
\int_{B(x,r)}|f|\, d\nu&=
\int_{\partial\Om} \chi_{B(x,r)}(y)\, |f(y)|\, d\nu(y)\\
  &\le 
  \left(\int_{\partial\Om}|f|^q\, d\nu\right)^{1/q}\, \nu(B(x,r))^{1/q'}\\
  &\approx \|f\|_{L^q(\partial\Om,\nu)}\, r^{-\Theta/q'}\, \mu(B(x,r))^{1/q'}.
\end{align*} 
In the last step above, we used the $\Theta$-codimentionality property~\eqref{eq:Co-Dim-intro} from~(H2). Now using
the lower mass bound property of $\mu$ from~\eqref{eq:lower-mass-exp-1}, we see that when $\xi\in\partial\Om$, $r<R_0$,
and $x\in\partial\Om\cap B(\xi,R_0)$, 
\begin{align*}
\frac{1}{\mu(B(x,r))}\int_{B(x,r)}|f|\, d\nu
&\le C\, \frac{\|f\|_{L^q(\partial\Om,\nu)}}{\mu(B(x,r))^{1/q}}\, r^{-\Theta/q'}\\
&\le C\, \frac{\|f\|_{L^q(\partial\Om,\nu)}}{\mu(B(\xi, R_0))^{1/q}}\, r^{-\Theta/q'}\, \left(\frac{R_0}{r}\right)^{Q^\partial_\mu/q}\\
   &=C\, \frac{R_0^{Q^\partial_\mu/q}\, \|f\|_{L^q(\partial\Om,\nu)}}{\mu(B(\xi, R_0))^{1/q}}\, r^{-(Q^\partial_\mu/q+\Theta/q')}.
\end{align*}
Thus, $f$ is in the Morrey class $M^{1,-(\alpha+\Theta)}(\partial\Om,\nu)$
with the choice of 
\[
\alpha= - \frac{Q^\partial_\mu+(q-1)\Theta}{q}.
\]
In view of Theorem \ref{new regularity}, we know that 
if we have
$-p<\alpha<-p(1-\tau) -\tau$, then
we obtain $\tfrac{p+\alpha}{p-1}$-H\"older continuity of weak solutions to the Neumann problem.  In our context, this implies the bound 
\[
q>\frac{Q^\partial_\mu-\Theta}{p-\Theta}.
\]
In our setting, this bound with the Theorem \ref{new regularity} 
implies that the range \eqref{oldrange-q} guarantees that the solutions are H\"older continuous up to the boundary with H\"older exponent
\begin{equation}\label{sharp exponent}
\min\left\{\tau,\frac{p+\alpha}{p-1}\right\} = \min\left\{\tau,\frac{q(p-\Theta)-Q^\partial_\mu+\Theta}{q(p-1)}\right\}>\eps,
\end{equation}
where $\eps$ is the H\"older exponent derived in \cite{CKKSS}, as in \eqref{old-exponent}.
Thus, the results of the present note improves the H\"older regularity obtained in~\cite{CGKS,CKKSS} when 
$q$ satisfies the above conditions.
\end{remark}

\bigskip

Finally,
we apply the 
results on the Neumann problem to infer this sharper H\"older regularity of solutions to fractional powers of the 
$p$-Laplacian in doubling metric measure spaces, see Theorem~\ref{nonlocal}.

\begin{remark}
In the Euclidean literature on regularity of weak solutions of elliptic equations of the form $Lu=f$, the first time 
the emphasis switched from $L^p$ conditions on $f$ to integral growth conditions on $f$ was in 
Morrey's papers~\cite{Mor2, Mor}. 
\end{remark}

\section{Preliminary results and definitions}

In this section, we provide some of the basic definitions and results from the literature that will play a role in our proofs.

\subsection{Doubling property and codimensionality}\label{subsect:doubling}
A measure $\mu$ is said to be \emph{doubling} if it is a Radon measure and there is a constant $C_d\ge 1$ such that
\[
0<\mu(B(x,2r))\le C_d\, \mu(B(x,r))<\infty
\]
for each $x\in X$ and $r>0$.
Doubling measures satisfy a lower mass bound property: {\it there are constants $c>0$
and $Q_\mu>0$, depending only on $C_d$,  such that
for each $x\in X$, $0<r<R<2\, \diam(\Om)$, and for each $y\in B(x,R)$,
\begin{equation}\label{eq:lower-mass-exp}
c\left(\frac{r}{R}\right)^{Q_\mu}\le \frac{\mu(B(y,r))}{\mu(B(x,R))},
\end{equation}
see~\cite[page~76]{HKST}. } 
In the special case when $\Omega\subset X$ is a John domain and $\overline\Omega=X$, we use 
the symbol $Q_\mu^\partial$ to denote the lower mass bound exponent obtained by considering only balls 
$B(x,r), B(x,R)$ with $x\in \partial \Omega$.  
That is, when $x\in\partial\Om$ and $0<r<R<2\, \diam(\Om)$, 
\begin{equation}\label{eq:lower-mass-exp-1}
c\left(\frac{r}{R}\right)^{Q_\mu^\partial}\le \frac{\mu(B(x,r))}{\mu(B(x,R))}.
\end{equation}
In general, it is possible to have $Q_\mu^\partial < Q_\mu$, see Remark~\ref{remark:CSexample}.

 Given a Radon measure $\mu$ on a domain $\Om$ and a Radon measure $\nu$ on $\partial\Om$,
we say
that $\nu$ is \emph{$\Theta$-codimensional with respect to $\mu$} if for some constant $C\ge 1$
we have 
\begin{equation}\label{eq:codim}
\frac{1}{C}\, \frac{\mu(B(\xi,r)\cap\Om)}{r^\Theta}\le \nu(B(\xi,r)\cap\partial\Om)\le C\, \frac{\mu(B(\xi,r)\cap\Om)}{r^\Theta}
\end{equation}
whenever $0<r<2\diam(\partial\Om)$ and $\xi\in\partial\Om$.
We extend $\mu$ to $\overline{\Om}$ by setting $\mu(\partial\Om)=0$, whence it follows that $\mu$ is 
doubling on $\overline{\Om}$. In the above
codimensionality condition we will, henceforth, dispense with the intersection of $B(\xi,r)$ with $\Om$.
Similarly, we extend $\nu$ to $\overline{\Om}$ by setting $\nu(\Om)=0$. 

\subsection{Sobolev-type spaces and Poincar\'e inequalities}\label{subsect:sob-PI}
One of the main features of a first-order calculus in metric measure spaces was first developed by
Heinonen and Koskela~\cite{HK} by using a notion of upper gradients as a substitute for weak derivatives.
Given a measurable function $u:X\to\R$, we say that a non-negative Borel function $g$ on
$X$ is an \emph{upper gradient} of $u$ if
\[
 |u(x)-u(y)|\le \int_\gamma g\, ds
\]
for every non-constant compact rectifiable curve $\gamma$ in $X$. Here, $x$ and $y$ denote the terminal points of $\gamma$.

The function $u$ is said to be in the \emph{homogeneous Sobolev space} $D^{1,p}(X)$ 
if $u$ has an upper gradient that belongs to $L^p(X)$;
and, $u$ is said to be in the \emph{Newton-Sobolev class} $N^{1,p}(X)$ if it is in $D^{1,p}(X)$ and, in addition, 
$\int_X|u|^p\, d\mu$ is finite.
Given that upper gradients are not unique, we define the energy seminorm on $D^{1,p}(X)$ by
\begin{equation}\label{penergy}
\mathcal{E}_p(u)^p:=\inf_g\int_Xg^p\, d\mu,
\end{equation}
where the infimum is over all upper gradients $g$ of $u$. The norm on $N^{1,p}(X)$ is given by
\[
\Vert u\Vert_{N^{1,p}(X)}:=\Vert u\Vert_{L^p(X)}+\mathcal{E}_p(u).
\]

If $1\le p<\infty$, then for each $u\in D^{1,p}(X)$ there is a unique (up to sets of $\mu$-measure zero) non-negative 
function $g_u$
that is the $L^p$-limit of a sequence of upper gradients of $u$ from $L^p(X)$ and so that for each upper 
gradient $g$ of $u$ we
have that $\Vert g_u\Vert_{L^p(X)}\le \Vert g\Vert_{L^p(X)}$. The functions $g_u$ belong to a larger class 
of ``gradients lengths'' of $u$,
called $p$-weak upper gradients, see for example~\cite{BBbook, HajK, HKST, Sh}. This 
function $g_u$ is said to be the \emph{minimal $p$-weak upper gradient} of $u$.

For $1\le p<\infty$, the metric measure space $(X,d,\mu)$ is said to support a $p$-Poincar\'e inequality if there are constants
$C_P>0$ and $\lambda\ge 1$ such that for all $u\in N^{1,p}(X)$ and balls $B=B(x,r)$ in $X$, we have,
with $u_B:=\mu(B(x,r))^{-1}\, \int_{B(x,r)} u\, d\mu$,
\[
\jint_{B(x,r)}|u-u_B|\, d\mu\le C_P\, r\, \left(\jint_{B(x,\lambda r)}g_u^p\, d\mu\right)^{1/p}.
\]
As shown in~\cite{HajK}, if
$X$ is a length space, then we can take $\lambda=1$ at the expense of increasing the constant
$C_P$. It is also well known that the $p$-Poincar\'e inequality and the doubling property of the measure
together imply the $(p,p)$-Poincar\'e inequality
 as follows:
\[
\jint_{B(x,r)}|u-u_B|^p\, d\mu\le C_{SP}\, r^p\, \jint_{B(x,r)}g_u^p\, d\mu,
\]
see for instance~\cite{HajK, HKST}.

Related to the class $N^{1,p}(U)$ for a given domain $U\subset X$, is the notion of 
\emph{Newton-Sobolev
spaces with zero boundary values}; these are crucial in posing Dirichlet boundary value problems. Given a domain
$U\subset X$ with $X\setminus U$ non-empty, we say that a function $f\in N^{1,p}(X)$ is in 
$N^{1,p}_0(U)$
if there is a representative $\widehat{f}$ of $f$ in $N^{1,p}(X)$ such that $\widehat{f}=0$ pointwise everywhere in
$X\setminus U$. We have that $N^{1,p}_0(U)$ consist of the $N^{1,p}(X)$-norm closure of the collection of all
functions in $N^{1,p}(X)$ with compact support contained in $U$; see Theorem 4.8 in \cite{Sh2}. 

\subsection{ John and uniform domains} \label{subsect:John}
A domain $\Om$ in a complete metric space $X$ is said to be a {\it John domain} if there is a point $x_0\in\Om$, called a  {\it John center},
and a  constant $C_J\ge 1$ such that whenever $x\in\Om$, there exists a rectifiable curve $\gamma_x$
in $\Om$ with end points $x_0$ and $x$ such that for each point $z$ in the image of $\gamma_x$, we have
that
\[
 \dist_{X\setminus\Om}(z)\ge C_J^{-1} \ell(\gamma_x[x,z]),
\]
where $\gamma_x[x,z]$ denotes the segments of $\gamma_x$ with end points $z$ and $x$. As a consequence,
a John domain is a connected open set and, moreover, if $\Om\ne X$ then $\Om$ is bounded.  

As mentioned in Remark~\ref{rem:uniform}, when $\Om$ is unbounded
it cannot be a John domain. A sufficient replacement for a localized version of our result is to assume $\Om$ to be a {\it uniform domain}.
Uniform domains are
characterized by the existence of a constant
$C_U\ge 1$ such that for every pair $x,y\in \Om$ there exists a rectifiable curve $\gamma_{xy}$ joining them with the property
\[
 \dist_{X\setminus\Om}(z)\ge C_U^{-1}  \min\bigg\lbrace\ell(\gamma_{xy} [x,z]),
 \ell(\gamma_{xy}[z,y])\bigg\rbrace \ \text{ and } \ \ell(\gamma_{xy}) \le C_U d(x,y),
\]
for all $z\in \gamma_{xy}$.  It is immediate to see that a bounded uniform domain is also John, but the converse is false
as demonstrated by the example of a planar slit disk, which is a John domain but is not a uniform domain.

\subsection{Differentiable structures}\label{Sec:Cheegerishness}
Some of the properties we are interested in depend on the existence of an Euler-Lagrange equation satisfied 
by energy minimizers. 
To achieve that we use a
Cheeger differentiable structure (see~\cite{Che}).
A metric measure space $(\Om,d,\mu)$ is said to support a Cheeger differential structure of dimension $N\in\N$ if there exists a 
collection of coordinate patches $\{(\Om_\alpha,\psi_\alpha)\}$ and a $\mu$-measurable inner product 
$\langle \cdot,\cdot\rangle_{x}$, $x\in \Om_\alpha$, on $\R^N$ such that
\begin{enumerate}
\item each $\Om_\alpha$ is a measurable subset of $\Om$ with positive measure and 
$\bigcup_\alpha \Om_\alpha$ has full measure;
\item each $\psi_\alpha:\Om_\alpha\rightarrow\R^N$ is Lipschitz;
\item for every function $u\in D^{1,p}(\Om)$, for $\mu$-a.e. $x\in \Om_\alpha$ there is a vector $\nabla u(x)\in\R^N$ such that
\[
	\elimsup\limits_{\Om_\alpha\ni y\rightarrow x}\frac{|u(y)-u(x)-\langle\nabla u(x),\psi_\alpha(y)-\psi_\alpha(x)\rangle_{x}|}{d(y,x)}=0.
\]
\end{enumerate}

When the metric $d$ is doubling, we may assume that the collection of coordinate patches is countable and 
that the coordinate 
neighborhoods $\{\Om_\alpha\}$ are pairwise disjoint. Note that there may be more than one possible 
Cheeger differential structure on a given space. From~\cite{Che} we also know that we can
choose the inner product structure $\langle\cdot,\cdot\rangle_x$ so that there is a constant $c>0$ 
with the property that when $u\in D^{1,p}(\Om)$,
\[
\frac{g_u(x)^2}{c}\le |\nabla u(x)|^2_x=\langle\nabla u(x),\nabla u(x)\rangle_x\le c\, g_u(x)^2
\]
for $\mu$-a.e.~$x\in X$. Thus, in the Poincar\'e inequalities, we can replace the quantity 
$\int_B g_u^p\, d\mu$ with the quantity
$\int_B|\nabla u(x)|_x^p\, d\mu(x)$.

A function $u\in N^{1,p}(U)$, where $U$ is relatively open in $\overline{\Om}$, is a \emph{(Cheeger) $p$-harmonic
function} in $U$ if, whenever $v\in D^{1,p}(U)$ has compact support in $U$, we have
\[
\int_{\text{supt}(v)}|\nabla u|^p\, d\mu\le \int_{\text{supt}(v)}|\nabla (u+v)|^p\, d\mu.
\]
Equivalently, we have the following corresponding Euler-Lagrange equation:
\[
\int_U|\nabla u(x)|^{p-2}\langle \nabla u(x),\nabla v(x)\rangle_x\, d\mu(x)=0.
\]
We say that $u$ is a solution to the Dirichlet problem on a ball $B\subset\overline\Om$ with the same boundary values as $w\in N^{1,p}(B)$ if $u$ is $p$-harmonic in $B$ and $u-w\in N^{1,p}_0(B)$. 

For brevity, in our exposition we will suppress the dependence of $x$ on the inner product structure, and denote
\[
\langle \nabla u(x),\nabla v(x)\rangle_x=:\nabla u(x)\cdot\nabla v(x),
\]
with $\nabla u(x)\cdot\nabla u(x)$ also denoted by $|\nabla u(x)|^2$,
when this will not lead to confusion.
Cheeger $p$-harmonic functions are quasiminimizers of the 
$p$-energy~\eqref{penergy} in the sense of Giaquinta~\cite{Gia}, 
and hence
we can avail ourselves of the properties derived in~\cite{KinSh}.

\subsection{Trace and extension theorems}\label{s:trace}
For a uniform domain $\Om$ with bounded boundary $\partial \Omega$, 
the existence of bounded linear trace operators 
$T:D^{1,p}(\Omega,\mu)\rightarrow B^{1-\Theta/p}_{p,p}(\partial\Omega,\nu)$ was established 
in \cite[Proposition 8.3]{GKS} and follows from the earlier work of Mal\'y \cite{Mal} for John 
domains.
Here, $B^{1-\Theta/p}_{p,p}(\partial\Omega,\nu)$ is a Besov space, see Section~\ref{sec:4} for the definition.
We recall that the trace operator $Tu:\partial\Om\to\R$ is given by
\[
\lim_{r\to 0^+}\frac{1}{\mu(B(\xi,r)\cap\Om)}\, \int_{\mu(B(\xi,r)\cap\Om)}|u-Tu(\xi)|\, d\mu=0
\]
for $\nu$-almost every $\xi\in\partial\Om$.

\begin{remark}
In this paper, we assume that $\mu$ is doubling but not necessarily Ahlfors regular; it follows then from the 
codimensionality condition~\eqref{eq:Co-Dim-intro} above that the measure $\nu$ on $\partial\Om$ is also 
doubling on $\partial\Om$,
even though it is not doubling on $\overline{\Om}$. We also know from the $\Theta$-codimensionality of
$\nu$ with respect to $\mu$ that a set of $p$-capacity zero is necessarily of $\nu$-measure zero, see~\cite{GKS} for
instance. From~\cite{KinLat} it follows that $p$-capacity almost every point is a Lebesgue point of
a Newton-Sobolev function, and so the above definition of trace also sets the value of the trace at $\nu$-almost
every point in $\partial\Om$.
\end{remark}

\subsection{Potential-theoretic preliminaries}

In this subsection, we gather together some theorems 
 that we use in proving the results of
the present paper.
We start with the following weak version of the Maz'ya capacitary 
inequality. The result, in the metric setting, can be found in~\cite[Lemma~2.1]{KinSh}. 

\begin{lem}\label{lem:Mazya}
	Let $(Y,d_Y,\mu_Y)$ be a compact doubling metric measure space supporting a $p$-Poincar\'e inequality.
For each $x\in Y$ and $0<r<\tfrac{1}{4}\diam(Y)$,  there is a constant $C\ge 1$ 
that depends on the constants associated with the doubling property of $\mu_Y$ and the $p$-Poincar\'e inequality
such that 
for each $u\in N^{1,p}_0(B(x,r))$ we have that
\[
\int_{B(x,r)}|u|^p\, d\mu_Y\le C\, r^p\, \int_{B(x,r)}|\nabla u|^p\, d\mu_Y.
\]
\end{lem}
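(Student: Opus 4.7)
My plan is to reduce the capacitary inequality to the standard $(p,p)$-Poincar\'e inequality by extending $u$ by zero to all of $Y$ and absorbing the resulting mean-value term using a reverse doubling property available in doubling Poincar\'e spaces.

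First I would let $\tilde u$ denote the extension of $u$ by zero to $Y\setminus B(x,r)$. Since $u\in N^{1,p}_0(B(x,r))$, the characterization of the zero-boundary-value class recalled in Section~\ref{subsect:sob-PI} ensures that $\tilde u\in N^{1,p}(Y)$ with minimal $p$-weak upper gradient equal to $|\nabla u|\chi_{B(x,r)}$ almost everywhere. Applying the $(p,p)$-Poincar\'e inequality to $\tilde u$ on the ball $B(x,2r)$ (and using that the upper gradient vanishes off $B(x,r)$) yields, with $c:=\tilde u_{B(x,2r)}$,
\[
\int_{B(x,2r)}|\tilde u-c|^p\,d\mu_Y\le C\,r^p\int_{B(x,r)}|\nabla u|^p\,d\mu_Y.
\]
Since $\tilde u=0$ on $B(x,2r)\setminus B(x,r)$, H\"older's inequality applied to the average defining $c$ gives $|c|^p\,\mu_Y(B(x,2r))\le (\mu_Y(B(x,r))/\mu_Y(B(x,2r)))^{p-1}\int_{B(x,r)}|u|^p\,d\mu_Y$.

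Next I would apply the triangle inequality on $B(x,r)$, where $\tilde u=u$, to combine the two displayed estimates into a bound of the form
\[
\int_{B(x,r)}|u|^p\,d\mu_Y\le C\,r^p\int_{B(x,r)}|\nabla u|^p\,d\mu_Y+C\,\frac{\mu_Y(B(x,r))}{\mu_Y(B(x,2r))}\int_{B(x,r)}|u|^p\,d\mu_Y.
\]
The main obstacle is to ensure that the ratio $\mu_Y(B(x,r))/\mu_Y(B(x,2r))$ is bounded away from $1$ uniformly, so that the last term may be absorbed on the left. To handle this, I would invoke the reverse doubling (uniform perfectness) property: any doubling metric space supporting a $p$-Poincar\'e inequality is quasiconvex and therefore uniformly perfect, and the hypothesis $r<\tfrac{1}{4}\diam(Y)$ produces a point of $Y$ lying outside $B(x,2r)$, so that $\mu_Y(B(x,r))\le\theta\,\mu_Y(B(x,2r))$ for some $\theta\in(0,1)$ depending only on the structural constants.

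With this uniform gap in hand, absorption is immediate and yields the stated inequality with a constant $C$ depending only on the doubling and Poincar\'e constants. An alternative route, if one prefers not to invoke uniform perfectness explicitly, is to apply the above chain of estimates at the radii $2^k r$ for $k=0,1,\dots,N$, where $2^N r$ is comparable to $\diam(Y)$; at the top scale, doubling already forces the average of $\tilde u$ to be a sufficiently small multiple of $\bigl(\int_{B(x,r)}|u|^p\,d\mu_Y\bigr)^{1/p}$ that no absorption is needed, and telescoping across scales reproduces the claim.
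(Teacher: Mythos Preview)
The paper does not actually prove this lemma; it merely records the statement and cites \cite[Lemma~2.1]{KinSh}. So there is nothing in the paper to compare your argument against, and your overall strategy---extend by zero, apply the $(p,p)$-Poincar\'e inequality on $B(x,2r)$, control the mean---is the standard one and is on the right track.

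There is, however, a genuine gap at the absorption step. Tracing the constants, the second term on the right of your combined inequality carries the coefficient $2^{p-1}\bigl(\mu_Y(B(x,r))/\mu_Y(B(x,2r))\bigr)^{p}$, and absorbing it requires this to be strictly less than $1$. Reverse doubling only guarantees \emph{some} $\theta_0<1$ with $\mu_Y(B(x,r))\le\theta_0\,\mu_Y(B(x,2r))$; this $\theta_0$ can lie arbitrarily close to $1$ (for instance on $[0,1]$ with the doubling $A_p$ weight $|x|^{-\alpha}$, balls centred at the origin give the ratio $2^{\alpha-1}\to 1$ as $\alpha\uparrow 1$), so $2^{p-1}\theta_0^{p}$ need not be below $1$. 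Your alternative telescoping route does not rescue this either: pushing the Poincar\'e inequality up to scale $\diam(Y)$ replaces the factor $r^p$ in the gradient term by $(\diam Y)^p$, which destroys the desired $r$-dependence; and since $r$ is only assumed $<\tfrac14\diam(Y)$, you may have as few as two dyadic scales available, which is again not enough to force the ratio small.

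The repair is a small change in how you bound $|c|$. Rather than H\"older's inequality on the average, use directly that $\tilde u=0$ on the annulus $A:=B(x,2r)\setminus B(x,r)$: the Poincar\'e estimate already yields
\[
|c|^p\,\mu_Y(A)=\int_A|\tilde u-c|^p\,d\mu_Y\le\int_{B(x,2r)}|\tilde u-c|^p\,d\mu_Y\le C\,r^p\int_{B(x,r)}|\nabla u|^p\,d\mu_Y.
\]
Reverse doubling (valid here since $2r<\tfrac12\diam(Y)$ and the space is connected by the Poincar\'e hypothesis) gives $\mu_Y(A)\ge(1-\theta_0)\,\mu_Y(B(x,2r))\ge(1-\theta_0)\,\mu_Y(B(x,r))$, hence $|c|^p\,\mu_Y(B(x,r))\le C\,r^p\int_{B(x,r)}|\nabla u|^p\,d\mu_Y$. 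Now the triangle inequality on $B(x,r)$ finishes the proof with no absorption at all.
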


This inequality has the following generalization, called an Adams-type inequality.
Notice that by \cite{KeithZhong}, the $p$-Poincar\'e inequality, together with the fact that $1<p<\infty$,
implies that the space $X$
supports also a $t$-Poincar\'e inequality with some $1\leq t<p$. 

\begin{lem}\label{lem:Adams t}
Suppose that $(X,d,\mu)$ satisfies a $t$-Poincar\'e inequality 
for some $1\le t<p$
and that $|\overline{\nu}|$ 
satisfies~\eqref{eq:nu-f-growth}. Then, for $p^{*}:=tp(Q_{\mu}+\alpha)/(t Q_{\mu} -p)$ with
$Q_\mu$ from~\eqref{eq:lower-mass-exp}, there 
exists a constant $C>0$ such that for all $x_0\in \overline{\Omega}$ and $R>0$,
\[
\left( \frac{1}{ \mu(B(x_{0},R))\mathcal{M}R^{\alpha} } \int_{B(x_0,R)} |w|^{p^{*}} d |\overline{\nu}|
\right)^{1/p^{*}}
\le C R \left(\jint_{B(x_{0},R)}|\nabla w|^{p} d\mu \right)^{1/p}
\]
whenever $w\in N^{1,p}_0(B(x_0,R))$.
\end{lem}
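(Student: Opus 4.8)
The plan is to derive the Adams-type embedding from two ingredients: the Maz'ya/Sobolev inequality for $N^{1,p}_0$-functions on balls (Lemma~\ref{lem:Mazya} together with the $t$-Poincaré inequality and the Sobolev-Poincaré improvement it yields via the doubling property), and the growth hypothesis~\eqref{eq:nu-f-growth} on $|\overline\nu|$, which says precisely that $|\overline\nu|$ is a measure whose density against $\mu$ decays like $r^\alpha$ on balls centered at the boundary. The cleanest route is to follow the classical Adams argument (the ``dyadic-annuli'' or ``layer-cake'' estimate) adapted to doubling metric measure spaces: estimate $\int_{B(x_0,R)}|w|^{p^*}\,d|\overline\nu|$ by decomposing the superlevel sets $\{|w|>\lambda\}$, bounding the $|\overline\nu|$-measure of each superlevel set by its $\mu$-measure times a power of its radius via~\eqref{eq:nu-f-growth} (a covering of the superlevel set by boundary-centered balls of comparable radius is needed here), and then summing a geometric-type series against the Sobolev inequality controlling $\|w\|_{L^{p^{**}}(\mu)}$ for the Sobolev conjugate exponent $p^{**}=tpQ_\mu/(tQ_\mu-p)$ associated to the $t$-Poincaré inequality.

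Concretely, I would first record the Sobolev inequality: since $(X,d,\mu)$ supports a $t$-Poincaré inequality with $1\le t<p$, the doubling property and Lemma~\ref{lem:Mazya}-type reasoning give, for $w\in N^{1,p}_0(B(x_0,R))$,
\[
\left(\jint_{B(x_0,R)}|w|^{p^{**}}\,d\mu\right)^{1/p^{**}}\le C\,R\left(\jint_{B(x_0,R)}|\nabla w|^{p}\,d\mu\right)^{1/p},
\]
with $p^{**}=tpQ_\mu/(tQ_\mu-p)$; note $p^{*}=p^{**}(Q_\mu+\alpha)/Q_\mu<p^{**}$ since $\alpha<0$, which is what makes room for the measure $|\overline\nu|$. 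Next, using~\eqref{eq:nu-f-growth}, for any ball $B(y,s)$ centered at $\partial\Om$ one has $|\overline\nu|(B(y,s))\le \mathcal M s^\alpha \mu(B(y,s))$; combining this with the lower mass bound~\eqref{eq:lower-mass-exp} for $\mu$ gives the key ``intermediate'' estimate
\[
|\overline\nu|(B(y,s))\le C\,\mathcal M\,R^\alpha\,\mu(B(x_0,R))\left(\frac{s}{R}\right)^{Q_\mu+\alpha}
\]
as long as $s\lesssim R$, i.e. $|\overline\nu|$ restricted to $B(x_0,R)$ behaves like a $(Q_\mu+\alpha)$-``dimensional'' measure at the scale $R$. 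Then the Adams lemma follows by the standard argument: estimate $\int|w|^{p^*}d|\overline\nu|$ by Cavalieri's principle $\int_0^\infty p^* \lambda^{p^*-1}|\overline\nu|(\{|w|>\lambda\})\,d\lambda$, cover $\{|w|>\lambda\}$ efficiently by balls, apply the intermediate estimate and the Chebyshev bound from the $L^{p^{**}}(\mu)$ norm of $w$, and optimize/sum. I would normalize so that the right-hand side equals $1$ (replacing $w$ by $w/\big(R(\jint|\nabla w|^p)^{1/p}\big)$) to keep the bookkeeping clean.

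The main obstacle — and the place the metric, merely-doubling setting genuinely differs from Adams's Euclidean argument — is the covering step: the hypothesis~\eqref{eq:nu-f-growth} is stated only for balls \emph{centered on $\partial\Om$}, so to bound $|\overline\nu|(E)$ for a general superlevel set $E\subset B(x_0,R)$ I cannot simply use arbitrary balls. I expect to handle this exactly as in the proof of the analogous statement for the doubling-measure case (this is morally where~\cite{Ono} and the metric potential-theory literature, e.g. the Maz'ya-type estimates behind~\cite{KinSh}, do the work): either reduce to $x_0\in\partial\Om$ (the case that matters for the boundary regularity application, with the interior case being easier and covered by standard interior Sobolev embeddings since $|\overline\nu|\in(N^{1,p})^*$), or use a Besicovitch/5r-covering of $E$ by balls centered at points of $E$, inflate each to reach $\partial\Om$ when necessary, and absorb the bounded overlap and dilation constants into $C$. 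A secondary technical point is verifying that $p^*>1$ and that the exponent arithmetic $p^{*}=p^{**}(Q_\mu+\alpha)/Q_\mu$ is consistent with the stated formula $p^{*}=tp(Q_\mu+\alpha)/(tQ_\mu-p)$; this is a direct substitution and requires only that $-p<\alpha$ (in fact $\alpha>-Q_\mu$ together with $tQ_\mu>p$ suffices, which holds under the running hypotheses). Once the covering issue is settled, the summation is routine.
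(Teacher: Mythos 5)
The paper proves this lemma by a one-line citation: it invokes \cite[Theorem~1.4]{Mak-Adams} (Mäkäläinen's Adams inequality on metric measure spaces) and simply checks the exponent identity $\tfrac{t-1}{t}+\tfrac{Q_{\mu}}{p}-\tfrac{Q_{\mu}}{p^{*}}=1+\tfrac{\alpha}{p^{*}}$, i.e.\ $p^*=tp(Q_\mu+\alpha)/(tQ_\mu-p)$. Your proposal instead tries to reprove the Adams inequality from scratch via a Cavalieri/covering decomposition, so it is a genuinely different route, and in principle a reasonable one. However, your central reduction fails.

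The ``key intermediate estimate'' you write,
\[
|\overline\nu|(B(y,s))\le C\,\mathcal M\,R^\alpha\,\mu(B(x_0,R))\left(\tfrac{s}{R}\right)^{Q_\mu+\alpha},
\]
does not follow from the hypotheses: after substituting $|\overline\nu|(B(y,s))\le\mathcal M s^\alpha\mu(B(y,s))$, the displayed inequality is equivalent to the \emph{upper} mass bound $\mu(B(y,s))\le C\,\mu(B(x_0,R))(s/R)^{Q_\mu}$. But a doubling measure only satisfies a \emph{lower} mass bound~\eqref{eq:lower-mass-exp}, which goes in the opposite direction; the upper bound you need would amount to Ahlfors $Q_\mu$-regularity, which the paper explicitly does not assume. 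In other words, you cannot trade the relative growth condition $|\overline\nu|(B_r)\le\mathcal M r^\alpha\mu(B_r)$ for a pure radius power ($|\overline\nu|$ ``behaving like a $(Q_\mu+\alpha)$-dimensional measure''); the whole point of \cite{Mak-Adams} is to carry out the Adams-type argument while keeping the factor $\mu(B_r)$ on the right-hand side. So the subsequent ``optimize/sum'' step has nothing valid to sum. If you want a self-contained proof, you should instead mimic Mäkäläinen's argument: a pointwise Hedberg/Riesz-potential estimate combined with a maximal function bound against $|\overline\nu|$ (or equivalently a Maz'ya-type capacitary criterion), keeping the ratio $|\overline\nu|(B)/\mu(B)$ throughout rather than collapsing $\mu(B)$ to a power of the radius.

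Two smaller remarks. First, the exponent bookkeeping you did, $p^{*}=p^{**}\cdot\tfrac{Q_\mu+\alpha}{Q_\mu}$ with $p^{**}=tpQ_\mu/(tQ_\mu-p)$, is consistent with the statement; that part is fine. Second, your worry about the growth condition~\eqref{eq:nu-f-growth} being stated only for boundary-centered balls is legitimate but easily dispatched: if $B(y,s)$ meets $\partial\Om$, pick $\xi\in B(y,s)\cap\partial\Om$ and inflate to $B(\xi,2s)\supset B(y,s)$; doubling and $\alpha<0$ then give the growth for arbitrary centers up to a constant (this is exactly the implication $(3)\Rightarrow(2)$ in Lemma~\ref{membership-Morrey}). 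For balls disjoint from $\partial\Om$ the measure $|\overline\nu|$ vanishes in the Neumann application. This is not the gap; the upper-mass-bound issue is.
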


Our formulation of Lemma~\ref{lem:Adams t} follows from ~\cite[Theorem 1.4]{Mak-Adams} by noticing that 
$\tfrac{t-1}{t}+\tfrac{Q_{\mu}}{p}-\tfrac{Q_{\mu}}{p^{*}}=1+\frac{\alpha}{p^{*}}$ and re-organizing the terms. 

\bigskip

Next we turn to properties of (Cheeger) $p$-harmonic functions. We start by recalling  the following 
result from~\cite[Proposition~3.3, Proposition~4.3, Theorem~5.2]{KinSh}. 

\begin{lem}\label{lem:harm-Holder}
Suppose that $v\in N^{1,p}(B(x_0,2R))$ is $p$-harmonic in $B(x_0,2R)$. Then, for $0<r<R$ and $k\in\R$, we have that
\begin{equation}\label{eq:DeGiorgi}
\int_{B(x,r)}|\nabla v|^p\, d\mu\le \frac{C}{(R-r)^p}\, \int_{B(x,R)}|v-k|^p\, d\mu,
\end{equation}
\begin{equation}\label{eq:weak-Harnack}
\osc_{B(x,r)}v\, \le C\, \left(\jint_{B(x,2r)}|v-v_{B(x,2r)}|^p\, d\mu\right)^{1/p},
\end{equation}
and
\begin{equation}\label{eq:Holder}
\osc_{B(x,r)} \, v\le C\, \left(\frac{r}{R}\right)^\tau\, \osc_{B(x,R)}\, v,
\end{equation}
with $0<\tau\le 1$ and $C>1$ depending solely on the constants associated with the doubling property of $\mu$ and the
constants associated with the $p$-Poincar\'e inequality.
\end{lem}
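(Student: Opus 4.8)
\textbf{Proof proposal for Lemma~\ref{lem:harm-Holder}.}

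The plan is to derive the three displayed estimates in order, treating \eqref{eq:DeGiorgi} as a Caccioppoli inequality, \eqref{eq:weak-Harnack} as an oscillation bound obtained from a De Giorgi / weak Harnack argument, and \eqref{eq:Holder} as the iterated consequence of the first two. Since $v$ is $p$-harmonic in $B(x_0,2R)$, it is in particular a quasiminimizer of the $p$-energy \eqref{penergy} in the sense of Giaquinta, so we may invoke the machinery of \cite{KinSh}; indeed the statement is exactly a repackaging of \cite[Proposition~3.3, Proposition~4.3, Theorem~5.2]{KinSh}, and the role of the proof here is to explain how those three references assemble into the form we use. First I would fix $B(x,R)\ssub B(x_0,2R)$ (so that the larger ball on which $v$ is $p$-harmonic contains $B(x,R)$; in the intended applications $x=x_0$), and record that all constants below depend only on the doubling constant $C_d$ of $\mu$ and on the Poincar\'e data $C_P,\lambda$, since $p$ is fixed.

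For \eqref{eq:DeGiorgi}: choose $k\in\R$, set $w:=v-k$, and take a Lipschitz cutoff $\eta$ with $\eta\equiv 1$ on $B(x,r)$, $\spt\eta\subset B(x,R)$, and $g_\eta\le 2/(R-r)$. Testing $p$-harmonicity with the competitor $v-\eta^p w$ (equivalently, using the Euler--Lagrange equation with test function $\eta^p w$, which has compact support in $B(x_0,2R)$), expanding $\nabla(\eta^p w)=\eta^p\nabla w+p\eta^{p-1}w\,\nabla\eta$, and applying Young's inequality to absorb the $\eta^p|\nabla v|^p$ term on the left, one obtains
\[
\int_{B(x,r)}|\nabla v|^p\, d\mu\le \int_{B(x,R)}\eta^p|\nabla v|^p\, d\mu\le \frac{C}{(R-r)^p}\int_{B(x,R)}|v-k|^p\, d\mu,
\]
which is \eqref{eq:DeGiorgi}. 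This step is routine once one is careful that $|\nabla(\cdot)|$ here means the Cheeger-inner-product length, so that the chain and product rules used are the ones valid for the Cheeger gradient (as recorded in Section~\ref{Sec:Cheegerishness}).

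For \eqref{eq:weak-Harnack}: this is the quantitative oscillation estimate for quasiminimizers. The plan is to apply the local boundedness and weak Harnack inequalities of \cite{KinSh} to the nonnegative quasiminimizers $v-\essinf_{B(x,2r)}v$ and $\esssup_{B(x,2r)}v-v$ on $B(x,2r)$, giving
\[
\esssup_{B(x,r)}v-\essinf_{B(x,r)}v\le C\left(\jint_{B(x,2r)}\bigl|v-v_{B(x,2r)}\bigr|^p\, d\mu\right)^{1/p};
\]
alternatively one runs a De Giorgi level-set iteration directly, using the Caccioppoli inequality \eqref{eq:DeGiorgi} for truncations $(v-k)_\pm$ together with the $(p,p)$-Poincar\'e inequality and the doubling property, to control $\osc_{B(x,r)}v$ by the $L^p$-oscillation on $B(x,2r)$. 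I expect \emph{this} to be the main obstacle: the De Giorgi iteration for quasiminimizers in the metric setting is the technically heaviest ingredient, and in a self-contained treatment one would need to reproduce the truncation estimates, the iteration lemma, and the passage from $\esssup/\essinf$ to the continuous oscillation (using that quasiminimizers have locally H\"older continuous representatives). Here we circumvent this by quoting \cite{KinSh} directly.

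For \eqref{eq:Holder}: this follows by the standard iteration of \eqref{eq:weak-Harnack} combined with a De Giorgi--Nash oscillation decay. Writing $\omega(\rho):=\osc_{B(x,\rho)}v$, the quasiminimality gives a uniform decay $\omega(\rho/2)\le\theta\,\omega(\rho)$ for some $\theta\in(0,1)$ depending only on the structural constants (this is the content of \cite[Theorem~5.2]{KinSh}; it is obtained by comparing the level sets of $v$ on $B(x,\rho/2)$ and $B(x,\rho)$ via \eqref{eq:weak-Harnack} and a measure-density dichotomy). Setting $\tau:=\log_2(1/\theta)\in(0,1]$ and iterating the dyadic decay, then filling in non-dyadic radii, yields
\[
\omega(r)\le C\Bigl(\frac{r}{R}\Bigr)^{\tau}\omega(R)\qquad(0<r<R),
\]
which is \eqref{eq:Holder}, with $\tau$ and $C$ depending only on the doubling and Poincar\'e constants. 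This completes the three estimates and hence the lemma.
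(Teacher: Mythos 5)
Your proposal is consistent with the paper, which does not prove this lemma at all but simply recalls it from \cite[Propositions~3.3 and~4.3, Theorem~5.2]{KinSh}, exactly the references you lean on for the two hard estimates. Your added sketches (the Caccioppoli test-function computation for \eqref{eq:DeGiorgi}, the De Giorgi/weak-Harnack route to \eqref{eq:weak-Harnack}, and the dyadic oscillation-decay iteration for \eqref{eq:Holder}) are the standard quasiminimizer arguments and are correct in outline, so no further comparison is needed.
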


In the above lemma, for $A\subset X$ we set 
\[
\osc_A\, v:=\sup\{|v(y)-v(x)|\, :\, x,y\in A\}.
\]
Thanks to the above lemma, we have the following decay estimates for gradients of $p$-harmonic functions on balls,
see also~\cite[Lemma~3.10]{Mak-Holder}. 

\begin{lem}\label{lem:harm-energy-decay}
Suppose that $v\in N^{1,p}(B(x_0,2R))$ is $p$-harmonic in $B(x_0,2R)$. Then for $0<r\le R/4$ we have that
\[
\jint_{B(x,r)}|\nabla v|^p\, d\mu\le C\, \left(\frac{r}{R}\right)^{\tau p-p}\, \jint_{B(x,R)}|\nabla v|^p\, d\mu.
\]
\end{lem}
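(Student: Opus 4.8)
The plan is to chain together the three estimates of Lemma~\ref{lem:harm-Holder} with the $(p,p)$-Poincar\'e inequality, inserting an intermediate ball of radius $2r$ for a Caccioppoli step and an intermediate ball of radius $R/2$ for the oscillation decay. Throughout, write $B_\rho:=B(x,\rho)$; since $r\le R/4$ the balls $B_r\subset B_{2r}\subset B_{R/2}\subset B_R$ are concentric and contained in $B(x_0,2R)$, where $v$ is $p$-harmonic, so every cited estimate applies. I use $C$ for a constant depending only on the structural data that may change from line to line.

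First I would apply the Caccioppoli inequality \eqref{eq:DeGiorgi} to the pair $B_r\subset B_{2r}$ with $k=v_{B_{2r}}$, and then bound the integrand pointwise by the oscillation:
\[
\int_{B_r}|\nabla v|^p\,d\mu\le \frac{C}{r^p}\int_{B_{2r}}|v-v_{B_{2r}}|^p\,d\mu\le \frac{C}{r^p}\,\mu(B_{2r})\,\bigl(\osc_{B_{2r}}v\bigr)^p.
\]
Next I would control $\osc_{B_{2r}}v$ by $\osc_{B_{R/2}}v$ using the H\"older decay \eqref{eq:Holder} (with radii $2r\le R/2$; the borderline case $r=R/4$ reduces \eqref{eq:Holder} to a triviality), and then bound $\osc_{B_{R/2}}v$ by the weak-type estimate \eqref{eq:weak-Harnack} on the pair $B_{R/2}\subset B_R$:
\[
\osc_{B_{2r}}v\le C\Bigl(\tfrac{r}{R}\Bigr)^{\tau}\osc_{B_{R/2}}v\le C\Bigl(\tfrac{r}{R}\Bigr)^{\tau}\Bigl(\jint_{B_R}|v-v_{B_R}|^p\,d\mu\Bigr)^{1/p}.
\]
Finally I would invoke the $(p,p)$-Poincar\'e inequality on $B_R$ to replace the last average by $C\,R\,\bigl(\jint_{B_R}|\nabla v|^p\,d\mu\bigr)^{1/p}$.

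Substituting these bounds into the Caccioppoli estimate and dividing by $\mu(B_r)$ yields
\[
\jint_{B_r}|\nabla v|^p\,d\mu\le C\,\frac{\mu(B_{2r})}{\mu(B_r)}\cdot\frac{R^p}{r^p}\,\Bigl(\frac{r}{R}\Bigr)^{\tau p}\jint_{B_R}|\nabla v|^p\,d\mu,
\]
and the doubling property $\mu(B_{2r})\le C_d\,\mu(B_r)$ together with the identity $\frac{R^p}{r^p}\bigl(\tfrac{r}{R}\bigr)^{\tau p}=\bigl(\tfrac{r}{R}\bigr)^{\tau p-p}$ gives the assertion. The argument is essentially bookkeeping, so there is no serious obstacle; the only point requiring a moment's care is that the resulting exponent is exactly $\tau p-p$, which is forced by the single factor $r^{-p}$ from Caccioppoli weighed against the factor $R^{p}$ from Poincar\'e and the factor $(r/R)^{\tau p}$ from the oscillation decay, and by the fact that only one doubling step (from $B_r$ to $B_{2r}$) is needed so that no power of $r/R$ is lost in comparing the measures.
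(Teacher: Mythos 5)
Your argument is correct and follows essentially the same chain as the paper's own proof: Caccioppoli \eqref{eq:DeGiorgi} on $B_r\subset B_{2r}$, then the oscillation decay \eqref{eq:Holder} from $B_{2r}$ to $B_{R/2}$, then the weak Harnack estimate \eqref{eq:weak-Harnack} from $B_{R/2}$ to $B_R$, then the $(p,p)$-Poincar\'e inequality. The only cosmetic differences are the choice $k=v_{B_{2r}}$ rather than $k=v(x)$ in the Caccioppoli step and the explicit doubling comparison $\mu(B_{2r})\lesssim\mu(B_r)$, neither of which changes anything of substance.
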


\begin{proof}
From~\eqref{eq:DeGiorgi} with $2r$ playing the role of $R$ there, we have that
\[
\int_{B(x,r)}|\nabla v|^p\, d\mu\le \frac{C}{r^p}\, \int_{B(x,2r)}|v-v(x)|^p\, d\mu.
\]
An application of~\eqref{eq:Holder} and the doubling property of $\mu$ now gives
\[
\int_{B(x,r)}|\nabla v|^p\, d\mu\le \frac{C}{r^p}\, \left(\osc_{B(x,2r)} v\right)^p\, \mu(B(x,r))
 \le \mu(B(x,r))\, \frac{C}{r^p}\, \left(\frac{r}{R}\right)^{\tau p}\, \left(\osc_{B(x,R/2)}\, v\right)^p.
\]
Now an application of~\eqref{eq:weak-Harnack} gives
\[
\jint_{B(x,r)}|\nabla v|^p\, d\mu\le \frac{C}{r^p}\, \left(\frac{r}{R}\right)^{\tau p}\, \jint_{B(x,R)}|v-v_{B(x,R)}|^p\, d\mu.
\]
Finally, an application of the $(p,p)$-Poincar\'e inequality yields  \[
\jint_{B(x,r)}|\nabla v|^p\, d\mu\le \frac{C}{r^p}\, \left(\frac{r}{R}\right)^{\tau p}\, R^p\, \jint_{B(x,R)} |\nabla v|^p\, d\mu.
\]
A rearrangement of the terms on the right-hand side gives the desired conclusion.
\end{proof}

Let $(Y,d_Y,\mu_Y)$ be a metric measure space. For a compact set $K\subset B(x,r)$, where $x\in Y$ and $r>0$, the relative $p$-capacity $\rcapa_p(K, Y\setminus B(x,2r))$ is the number
\[
\rcapa_p(K, Y\setminus B(x,2r))=\inf_u\int_Y\, g_u^p\, d\mu_Y,
\]
where the infimum is over all $u\in N^{1,p}(Y)$ that satisfy $u\ge 1$ on $K$ and $u=0$ on $Y\setminus B(x,2r)$. The following lemma establishes a uniform $p$-fatness condition (in the sense of Lewis~\cite{JohnLewis}, see~\cite[Definition~1.1]{BMcSh}
for the metric setting) for subsets of $Y$ that have positive codimension. 

\begin{lem}\label{lem:Loewner}
	Let $(Y,d_Y,\mu_Y)$ be a compact doubling metric measure space supporting a $p$-Poincar\'e inequality, and 
	$E\subset Y$ be a closed set supporting a Borel measure $\nu$. If there are constants $C\ge 1$ and $0<\Theta<p$
	such that for each $x\in E$ and $0<r\le \diam(E)$ we have
	\[
	\frac{1}{C}\, \frac{\mu_Y(B(x,r))}{r^\Theta}\le \nu(B(x,r))\le C\, \frac{\mu_Y(B(x,r))}{r^\Theta},
	\]
	then there is a constant $\Lambda>0$ such that for each $x\in E$ and $0<r<\tfrac{1}{4}\diam(E)$, we have
	\[
	\frac{\rcapa_p(\overline{B}(x,r)\cap E, Y\setminus B(x,2r))}{\rcapa_p(\overline{B}(x,r), Y\setminus B(x,2r))}
	\ge \Lambda.
	\]
\end{lem}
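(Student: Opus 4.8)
The plan is to establish the two-sided estimate
\[
\rcapa_p\bigl(\overline{B}(x,r)\cap E,\,Y\setminus B(x,2r)\bigr)\;\gtrsim\;\frac{\mu_Y(B(x,r))}{r^p}\;\gtrsim\;\rcapa_p\bigl(\overline{B}(x,r),\,Y\setminus B(x,2r)\bigr),
\]
with implicit constants independent of $x$ and $r$; the asserted inequality then holds with $\Lambda$ the ratio of these two constants. The second $\gtrsim$ is the standard upper bound for the capacity of a ball: the function $y\mapsto\max\{0,\,1-r^{-1}\dist(y,B(x,r))\}$ is admissible in the definition of $\rcapa_p(\overline{B}(x,r),\,Y\setminus B(x,2r))$ and has an upper gradient bounded by $r^{-1}$ and supported in $B(x,2r)$, so the doubling property of $\mu_Y$ gives $\rcapa_p(\overline{B}(x,r),\,Y\setminus B(x,2r))\le r^{-p}\mu_Y(B(x,2r))\lesssim r^{-p}\mu_Y(B(x,r))$. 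Hence everything reduces to the lower bound for the first capacity.

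Fix a function $u$ admissible in the definition of $\rcapa_p(\overline{B}(x,r)\cap E,\,Y\setminus B(x,2r))$; after truncating we may assume $0\le u\le 1$, $u\equiv 1$ on $\overline{B}(x,r)\cap E$ and $u\equiv 0$ on $Y\setminus B(x,2r)$, so that $u\in N^{1,p}_0(B(x,2r))$. The key is a trace-type Maz'ya inequality for the codimensional measure $\nu$: for every $w\in N^{1,p}_0(B(x,2r))$,
\[
\int_{B(x,2r)}|w|^p\,d\nu\;\le\;C\,r^{p-\Theta}\int_{B(x,2r)}g_w^p\,d\mu_Y .
\]
This is the $\nu$-analogue of Lemma~\ref{lem:Mazya}; the exponent $p-\Theta$ is positive precisely because $\Theta<p$, and it is the only exponent consistent with scaling, since $\nu(B(\xi,s))\approx s^{-\Theta}\mu_Y(B(\xi,s))$ while $g_w$ behaves like $w/r$. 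Granting this and applying it to $w=u$, the normalization $u\equiv 1$ on $\overline{B}(x,r)\cap E$ together with the lower codimensionality bound give
\[
\int_Y g_u^p\,d\mu_Y\;\ge\;\int_{B(x,2r)}g_u^p\,d\mu_Y\;\ge\;\frac{1}{C\,r^{p-\Theta}}\,\nu\bigl(\overline{B}(x,r)\cap E\bigr)\;\ge\;\frac{1}{C^2}\,\frac{\mu_Y(B(x,r))}{r^p},
\]
and taking the infimum over all admissible $u$ gives the desired lower bound.

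Thus the crux is the trace inequality. I would obtain it from the Adams-type inequality of Lemma~\ref{lem:Adams t} --- or rather from the result of \cite{Mak-Adams} from which it is quoted, which is valid in any complete doubling $p$-Poincar\'e space --- applied to $(Y,d_Y,\mu_Y)$ and to $\nu$: extending $\nu$ by zero off $E$ and using the doubling property to handle centres outside $E$, the codimensionality shows that $\nu$ satisfies the growth bound~\eqref{eq:nu-f-growth} with $M\approx C$ and $\alpha=-\Theta\in(-p,0)$, so the Adams inequality gives a Sobolev-type embedding of $N^{1,p}_0(B(x,2r))$ into $L^{p^{*}}(\nu)$, and a single application of H\"older's inequality on $B(x,2r)$ turns this into the displayed $L^p(\nu)$ estimate. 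Alternatively, one can argue directly: for $\nu$-a.e.\ point $\xi\in E$ (sets of $p$-capacity zero are $\nu$-null, and $p$-capacity-a.e.\ point is a Lebesgue point of $u$) one telescopes the averages of $u$ over concentric balls to bound $|u(\xi)|$ by a discrete Wolff-type potential of $g_u^p$, and then integrates the $p$-th power against $\nu$, using the codimensional comparison to pass from $d\nu$ to scaled $d\mu_Y$; the geometric series that arises sums precisely because $\Theta<p$. (This is in substance the route to the uniform $p$-fatness results of~\cite{BMcSh}.) In either approach the real obstacle is exactly this: proving the trace inequality with the sharp scaling $r^{p-\Theta}$ knowing only that $\mu_Y$ is doubling --- not Ahlfors regular --- which is where both $\Theta<p$ and the non-regularity of $\mu_Y$ genuinely enter. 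Everything else is a soft capacity comparison.
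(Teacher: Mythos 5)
Your overall strategy is sound and genuinely different in organization from the paper's. You correctly reduce matters to the lower bound $\rcapa_p(\overline{B}(x,r)\cap E, Y\setminus B(x,2r))\gtrsim \mu_Y(B(x,r))/r^p$, you identify the correct key estimate (the Maz'ya-type trace inequality with sharp scaling $r^{p-\Theta}$ for the codimensional measure $\nu$), and you correctly locate $\Theta<p$ as the reason the argument closes. The paper, however, never isolates a trace inequality: having fixed an admissible $u$, it uses the triangle inequality to split into two cases according to whether $|u-u_{B(x_0,4r)}|>\tfrac13$ on $\overline B(x_0,r)\cap E$ or on $Y\setminus B(x_0,2r)$; in the first case it telescopes pointwise at each $\nu$-Lebesgue point, selects a ``good'' scale $j_x$ where the single term already dominates, and then sums via the $5$-covering theorem, while in the second case it applies the $\mu_Y$-Maz'ya inequality of Lemma~\ref{lem:Mazya}. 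This two-case decomposition sidesteps Fubini/weighted-H\"older manipulations on the Wolff-type sum and keeps the proof self-contained, with no appeal to Lemma~\ref{lem:Adams t}. Your modular route buys a reusable trace lemma at the cost of importing the Adams machinery.

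One caveat on your Adams-plus-H\"older derivation of the trace inequality: passing from the $L^{p^*}(\nu)$ bound to an $L^p(\nu)$ bound by H\"older requires $p^*\ge p$, which (from $p^*=tp(Q_\mu-\Theta)/(tQ_\mu-p)$) is equivalent to $t\Theta\le p$. The Keith--Zhong self-improvement gives some $t<p$, not any prescribed $t$, so when $\Theta>1$ this inequality is not automatic. The easy repair is to skip the intermediate trace inequality entirely: after truncating so $0\le u\le 1$ and $u\equiv1$ on $\overline B(x,r)\cap E$, apply Lemma~\ref{lem:Adams t} directly to $u$ on $B(x,2r)$ with $\alpha=-\Theta$. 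The left side is then bounded below by a universal constant, because $|u|^{p^*}\ge\chi_{\overline B(x,r)\cap E}$ and the lower codimensionality bound gives $\nu(\overline B(x,r)\cap E)\gtrsim r^{-\Theta}\mu_Y(B(x,r))$, which cancels against the normalization $\mu(B(x_0,R))\mathcal M R^\alpha$; this yields $\int g_u^p\,d\mu_Y\gtrsim \mu_Y(B(x,r))/r^p$ with no constraint relating $p^*$ and $p$. Your alternative direct route (telescoping, Fubini against $d\nu$ using codimensionality, geometric series converging because $\Theta<p$) is also correct and is in spirit closest to what the paper actually does, though the paper's covering argument avoids the weighted Cauchy--Schwarz needed to bring the $p$-th power inside the sum.
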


\begin{proof}
	From~\cite[Lemma~2.6]{BMcSh} (or~\cite[Lemma~3.3]{Bj}),
	we know that 
	\[
	\rcapa_p(\overline{B}(x,r), Y\setminus B(x,2r))\approx \frac{\mu(B(x,r))}{r^p}.
	\]
	Thus it suffices to show that 
	\[
	\rcapa_p(\overline{B}(x,r)\cap E, Y\setminus B(x,2r))\gtrsim\frac{\mu(B(x,r))}{r^p}.
	\]
	The proof, given here, uses a technique developed in~\cite{HK}. 
	
	Fix $x_0\in E$ and $0<r< \tfrac{1}{4} \diam(E)$.
	Let $u\in N^{1,p}(Y)$ such that $u\ge 1$ on $\overline{B}(x_0,r)\cap E$ and $u=0$ on $Y\setminus B(x_0,2r)$. Then,
	for each $x\in \overline{B}(x_0,r)\cap E$ and $y\in Y\setminus B(x_0,2r)$ we have that $|u(x)-u(y)|\ge 1$. 
	Therefore by the triangle inequality one of two cases
	must occur: either, for every $x\in \overline{B}(x_0,r)\cap E$ we have that
	$|u(x)-u_{B(x_0,4r)}|>\tfrac13$, or else, for every $y\in Y\setminus B(x_0,2r)$ we must have that
	$|u(y)-u_{B(x_0,4r)}|>\tfrac13$. 
	
	Let us consider first the first case that  
	$|u(x)-u_{B(x_0,4r)}|>\tfrac13$ for each $x\in \overline{B}(x_0,r)\cap E$. We know that $\nu$-a.e. point in $E$ is a Lebesgue point, see for instance~\cite[Propositions~3.11 and~8.3]{GKS}. See also \cite{KinLat, HKST} for more on Lebesgue-point properties of Sobolev functions. Thus for $\nu$-a.e.  $x\in \overline{B}(x_0,r)\cap E$ 
	we have that 
	
	\begin{align*}
		\frac13<|u(x)-u_{B(x_0,4r)}|\le \sum_{j\in\N}|u_{B_j(x)}-u_{B_{j+1}(x)}|,
	\end{align*}
	where $B_1(x):=B(x_0,4r)$ and for positive integers $j\ge 2$, $B_j(x):=B(x,2^{2-j}r)$. Noting that 
	$B_{j+1}(x)\subset B_j(x)$ for 
	each positive integer $j$, from the doubling property of $\mu_Y$ followed by the Poincar\'e inequality, it follows that
	\begin{align*}
		\frac13\le \sum_{j\in\N}|u_{B_j(x)}-u_{B_{j+1}(x)}|
		\lesssim & \sum_{j\in\N}\jint_{B_j(x)}|u-u_{B_j(x)}|\, d\mu_Y\\
		\lesssim & \sum_{j\in\N} 2^{-j}r\, \left(\jint_{\lambda B_j(x)}g_u^p\, d\mu_Y\right)^{1/p}\\
		\lesssim & \sum_{j\in\N}\frac{2^{-j}r}{\mu_Y(B_j(x))^{1/p}}\, \left(\int_{\lambda B_j(x)}g_u^p\, d\mu_Y\right)^{1/p}.
	\end{align*}
	Since such $x$ are in the set $E$, by the assumption on the measure $\nu$ we have for $\eta>0$,
	\begin{align*}
		\frac13 c(\eta)\, \sum_{j\in\N}2^{-j\eta}=
		\frac13\lesssim & \sum_{j\in\N}\frac{(2^{-j}r)^{1-\Theta/p}}{\nu(B_j(x))^{1/p}}\, \left(\int_{\lambda B_j(x)}g_u^p\, d\mu_Y\right)^{1/p}.
	\end{align*}
	All the comparison constants implicitly referred to above depend solely on the doubling constant and the
	constant associated with the Poincar\'e inequality.
	It follows that there is a positive integer $j_x$ such that
	\[
	\frac{c(\eta)}{3}\, 2^{-j_x\eta}
	\lesssim \frac{(2^{-j_x}r)^{1-\Theta/p}}{\nu(B_{j_x}(x))^{1/p}}\, \left(\int_{\lambda B_{j_x}(x)}g_u^p\, d\mu_Y\right)^{1/p},
	\]
	that is,
	\[
	2^{-j_x(\eta p-p+\Theta)}\nu(B_{j_x}(x))\lesssim c(\eta)^{-p}\, r^{p-\Theta}\, \int_{\lambda B_{j_x}(x)}g_u^p\, d\mu_Y.
	\]
	Choosing $\eta=1-\tfrac{\Theta}{p}>0$ in the above analysis, we get
	\[
	\nu(B_{j_x}(x))\lesssim r^{p-\Theta}\, \int_{\lambda B_{j_x}(x)}g_u^p\, d\mu_Y,
	\]
	where the comparison constant now also depends on $c(\eta)$ corresponding to the choice of $\eta$ made above,
	and so on $p$ and $\Theta$.
	
	The collection $\lambda B_{j_x}(x)$, $x\in E\cap\overline{B}(x_0,r)$ with $x$ a Lebesgue point of $u$, is a cover of
	this set. Thanks to the $5$-covering theorem~\cite{Hei}, we obtain a countable pairwise disjoint subcollection
	$\{B_k\}_{k\in I\subset \N}$ such that $\{5B_k\}_{k\in I}$ is a cover of that set. 
	
	Recall that $\mu_Y$ is doubling. It follows that $\nu$ is also a doubling measure. As the set of points $x\in E$ that
	are not Lebesgue points of $u$ forms a $\nu$-measure zero set, it follows that
	\begin{align*}
		\frac{\mu_Y(B(x_0,r))}{r^\Theta}\lesssim
		\nu(E\cap\overline{B}(x_0,r))\le \sum_{k\in I}\nu(5B_k)\lesssim \sum_{k\in I}\nu(\tfrac{1}{\lambda}B_k)
		&\lesssim r^{p-\Theta}\, \sum_{k\in I}\int_{B_k}g_u^p\, d\mu_Y\\
		&\le r^{p-\Theta}\, \int_Yg_u^p\, d\mu_Y,
	\end{align*}
	that is,
	\begin{equation}\label{eq:ABC}
		\frac{\mu_Y(B(x_0,r))}{r^p}\lesssim \int_Yg_u^p\, d\mu_Y.
	\end{equation}
	
	On the other hand, if for every $y\in Y\setminus B(x_0,2r)$ we have that
	$|u_{B(x_0,4r)}|=|u(y)-u_{B(x_0,4r)}|>\tfrac13$, then as $u(y)=0$ for each $y\in B(x_0,4r)\setminus B(x_0,2r)$,
	by Lemma~\ref{lem:Mazya}, that is the Maz'ya inequality, together with the fact that $0<r<\tfrac{1}{4}\diam(E)\leq\tfrac{1}{4}\diam(Y)$, 
	we have 
	\[
	\tfrac{1}{3}\lesssim C\, r\, \left(\jint_{B(x_0,4r)}\, g_u^p\, d\mu_Y\right)^{1/p}.
	\]
	This again leads to~\eqref{eq:ABC}.
	
	Now, taking the infimum over all such $u$, from~\eqref{eq:ABC} we have
	\[
	\frac{\mu_Y(B(x_0,r))}{r^p}\lesssim \rcapa_p(\overline{B}(x_0,r)\cap E, Y\setminus B(x_0,2r)).
	\]
	As the above holds for all $x_0\in E$ and $0<r<\tfrac{1}{4}\diam(E)$, the claim of the lemma follows.
\end{proof}

\begin{remark}\label{rem:fat}
	Such uniform fatness estimates are useful in establishing boundary regularity of solutions to 
	Dirichlet problems with H\"older continuous boundary data. 
	From~\cite[Theorem~5.1]{BMcSh} (see~\cite[Theorem~3.1]{Danielli} for the setting of H\"ormander
	vector fields in $\R^n$), under the structure hypotheses of the present paper,
	we know that there is some $\delta_F\in(0,1)$, depending on the structural constants and $\Lambda$ 
	from Lemma~\ref{lem:Loewner}, such that
	every $p$-harmonic function in $\Om$ with a $\beta$-H\"older continuous trace on $Z\cap B(\xi, r)$ for some $\beta>0$,
	$r>0$, and $\xi\in Z=\partial\Om$,
	is necessarily $\min\{\beta,\tau,\delta_F\}$-H\"older continuous on $X\cap B(\xi, r/2)$. Here, we recall that ${\tau}>0$ is the H\"older exponent for the interior regularity estimates 
	established in~\cite[Theorem~5.2]{KinSh}.  
\end{remark}

\subsection{Morrey  spaces}\label{subsec-Morrey}
Next we recall the definition of the Morrey  spaces. 
\begin{defn}\label{def:Morrey} 
Let $(Y,d_Y,\mu_Y)$ be a metric measure space with $\mu_Y$ a Borel regular measure on $Y$, 
$\lambda\in \R$, $1\le s <\infty$ and $R_0>0$. 
The Morrey space $M^{s,\lambda} (Y,\mu_Y)$ is defined by
\[
M^{s,\lambda}(Y,\mu_Y)=\Bigg\{ g\in L^s_{loc}(Y,\mu_Y)\, :\,  [g]_{M^{s,\lambda}}^s
:= \sup_{x\in Y, 0<r\le R_0} r^{s \lambda }  \jint_{B(x,r)} |g|^s d\mu_Y <\infty \Bigg\}.
\]
\end{defn}

In the following lemma we show that if, in the definition above, we replace $Y$ with $\partial\Om$ and $\mu_Y$ with the measure $\nu$, then 
the choice of $\nu_f$ given by $d\nu_f=f\, d\nu$, the decay condition~\eqref{eq:nu-f-growth},
and the codimensionality condition~\eqref{eq:Co-Dim-intro}, yield that $f\in M^{1,-(\alpha+\Theta)}(\partial\Om,\nu)$.

\begin{lem}\label{membership-Morrey}
For $f\in L^{p'}(\overline{\Om}, \nu)$, setting  $d|\nu_f|=|f|\, d\nu$, 
the following are equivalent:
\begin{enumerate}
\item the function $f$ is in the Morrey space $M^{1,-(\alpha+\Theta)}(\partial\Om,\nu)$,
\item there is some $\M>0$ such that for each $x\in\overline{\Om}$ 
and $0<r\le R_0$ we have that
\begin{equation}\label{eq:Mor-f}
\frac{1}{\mu(B(x,r))}\int_{B(x,r)}|f|\, d\nu\le \max\{1,2^{\alpha}\}\, C_D^2\, \M\, r^{\alpha},
\end{equation}
\item the measure $|\nu_f|$
satisfies \eqref{eq:nu-f-growth}, i.e. there is some constant $\M>0$ such that
\[
\frac{|\nu_f|(B(x,r))}{\mu(B(x,r))}\le \M\, r^\alpha
\]
for all $x\in \partial\Om$ and $0<r\le R_0$.
\end{enumerate}
\end{lem}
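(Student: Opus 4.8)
The plan is to establish the equivalence of the three conditions by proving the cyclic chain of implications $(1)\Rightarrow(2)\Rightarrow(3)\Rightarrow(1)$, where the only genuinely nontrivial step is $(1)\Rightarrow(2)$, which requires upgrading the Morrey-type control with respect to the surface measure $\nu$ on balls centered at boundary points to control by $\mu$ on \emph{all} balls in $\overline{\Om}$ (including those centered in the interior, and of arbitrary radius up to $R_0$).

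First I would dispose of the easy implications. For $(2)\Rightarrow(3)$: if $x\in\partial\Om$, then since $\mu(\partial\Om)=0$ and $\nu$ is supported on $\partial\Om$, we have $|\nu_f|(B(x,r))=\int_{B(x,r)}|f|\,d\nu$, so the inequality in~\eqref{eq:Mor-f} is literally the inequality in~\eqref{eq:nu-f-growth} with $\M$ replaced by the constant $\max\{1,2^\alpha\}\,C_D^2\,\M$; this is exactly~\eqref{eq:nu-f-growth}. For $(3)\Rightarrow(1)$: given $x\in\partial\Om$ and $0<r\le R_0$, we use the codimensionality~\eqref{eq:Co-Dim-intro} to write $\nu(B(x,r))\approx \mu(B(x,r))\,r^{-\Theta}$, so that
\[
r^{-(\alpha+\Theta)}\jint_{B(x,r)\cap\partial\Om}|f|\,d\nu
= r^{-(\alpha+\Theta)}\,\frac{|\nu_f|(B(x,r))}{\nu(B(x,r))}
\approx r^{-(\alpha+\Theta)}\,\frac{|\nu_f|(B(x,r))}{\mu(B(x,r))}\,r^{\Theta}
\le C\,\M,
\]
which is a uniform bound, hence $f\in M^{1,-(\alpha+\Theta)}(\partial\Om,\nu)$ by Definition~\ref{def:Morrey} applied on $(\partial\Om,\nu)$.

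The main work is $(1)\Rightarrow(2)$. Suppose $[f]_{M^{1,-(\alpha+\Theta)}}\le \M$. Fix $x\in\overline{\Om}$ and $0<r\le R_0$. If $B(x,r)\cap\partial\Om=\emptyset$ then the left side of~\eqref{eq:Mor-f} vanishes and there is nothing to prove, so assume there is a point $\xi\in B(x,r)\cap\partial\Om$. Then $B(x,r)\subset B(\xi,2r)$, and since the integrand $|f|\,d\nu$ is supported on $\partial\Om$,
\[
\int_{B(x,r)}|f|\,d\nu\le \int_{B(\xi,2r)}|f|\,d\nu
= \nu(B(\xi,2r)\cap\partial\Om)\jint_{B(\xi,2r)\cap\partial\Om}|f|\,d\nu
\le \nu(B(\xi,2r))\,\M\,(2r)^{\alpha+\Theta},
\]
using $(1)$ with radius $2r$ (here I should note $2r\le 2R_0$, which is within the admissible range after possibly enlarging $R_0$ by a harmless factor, or one simply takes $r\le R_0/2$ and absorbs the change of range into constants as is standard). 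Now apply the upper codimensionality bound $\nu(B(\xi,2r))\le C\,\mu(B(\xi,2r))\,(2r)^{-\Theta}$ to get
\[
\int_{B(x,r)}|f|\,d\nu\le C\,\M\,(2r)^{\alpha}\,\mu(B(\xi,2r)).
\]
Finally I bound $\mu(B(\xi,2r))$ in terms of $\mu(B(x,r))$: since $\xi\in B(x,r)$ we have $B(\xi,2r)\subset B(x,4r)$, and two applications of doubling give $\mu(B(\xi,2r))\le\mu(B(x,4r))\le C_D^2\,\mu(B(x,r))$. Combining, and writing $(2r)^\alpha\le\max\{1,2^\alpha\}\,r^\alpha$ (the max is needed because $\alpha<0$ could make $2^\alpha<1$, or if one thinks of it the other way, because $\alpha$ may be positive or negative depending on convention), we arrive at
\[
\frac{1}{\mu(B(x,r))}\int_{B(x,r)}|f|\,d\nu\le \max\{1,2^\alpha\}\,C_D^2\,C\,\M\,r^\alpha,
\]
which is~\eqref{eq:Mor-f} (after relabeling the constant $C$ from codimensionality into $\M$). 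The one point requiring a little care throughout — and the place where a careless argument would go wrong — is the bookkeeping between balls centered at the interior point $x$ versus the nearby boundary point $\xi$: one must always pass to a concentric-at-$\xi$ enlargement before using hypothesis $(1)$ (which only controls boundary-centered balls), and then pass back to $x$ using doubling of $\mu$ and the containment of balls; the admissible radius range $0<r\le R_0$ must be tracked so that $2r$ (or $4r$) still falls in the range where~\eqref{eq:Co-Dim-intro} holds, which it does since that estimate is valid for $0<r<2\diam(\partial\Om)$.
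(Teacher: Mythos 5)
Your proof is correct and takes essentially the same approach as the paper's: the nontrivial direction rests on the same ball-shifting argument (replace $B(x,r)$ by $B(\xi,2r)$ with $\xi\in B(x,r)\cap\partial\Om$, then return via doubling of $\mu$), with the codimensionality translation between $\nu$- and $\mu$-normalizations done inline in your $(1)\Rightarrow(2)$ step rather than isolated upfront as the paper's $(1)\Leftrightarrow(3)$. The radius-bookkeeping point you flag ($2r$ versus $R_0$) is a genuine minor sloppiness that is also present, unremarked, in the paper's own proof.
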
 

\begin{proof}
The equivalence between~(3) and~(1) follows from the $\Theta$-codimensionality of $\nu$ as in~(H2). 
Observe also that~(2) implies~(3) by choosing $x\in\partial\Om$ in~\eqref{eq:Mor-f} and $\M$ in~(3) replaced by 
$\max\{1,2^\alpha\}\, C_D^2\, \M$.
Thus,
we devote the remainder of the proof to proving that~(3) implies~(2).

Suppose that~(3) holds.
If $x\in\partial\Om$, then~\eqref{eq:Mor-f} follows from the assumption~(3) and 
$C_D\ge 1$.
Thus it suffices to consider the case $x\in\Om$.
If $B(x,r)$ does not intersect $\partial\Om$, then $\int_{B(x,r)}|f|\, d\nu=0$, and \eqref{eq:Mor-f} follows trivially. Hence,  without loss of generality, we can 
 assume that $x\in \Omega$ and $B(x,r)\cap\partial\Om$ is nonempty. In this case, we can choose $\xi\in B(x,r)\cap\partial\Om$,
and note that then $B(x,r)\subset B(\xi,2r)$. It follows from the assumption~(3)
and the fact that $0<2r\le R_0$ that
\[
\frac{1}{\mu(B(x,r))}\int_{B(x,r)}|f|\, d\nu\le C_D^2\, \frac{1}{\mu(B(\xi,2r))}\int_{B(\xi,2r)}|f|\, d\nu
\le C_D^2\, \M\, (2r)^{\alpha}. \qedhere
\]
\end{proof}

\begin{remark}
If $f\in M^{1,-(\alpha+\Theta)}(\partial\Om,\nu)$ and $\alpha+\Theta>0$ then $f=0$, and if $\alpha+\Theta=0$ then $f$ is 
in $L^\infty(\partial\Om,\nu)$. In both of those cases we have H\"older continuity of $u$ from the prior
work~\cite{CGKS, CKKSS}. Hence the interesting part is
the case when  $\alpha+\Theta<0$. 
\end{remark}

The following proposition is a variant of a result of Campanato~\cite{Cam} in the Euclidean setting, and of
Da Prato~\cite{DaP} for Ahlfors regular distances in $\R^n$.  We include the proof here to keep our discussion self-contained.

\begin{prop} \label{prop:Luca-Campanato}
Let $(Y,d_Y,\mu_Y)$ be a doubling metric measure space supporting an $s$-Poincar\'e inequality for some $1\leq s <\infty$. 		
Suppose that $B$ is a ball in $Y$, $u\in N^{1,s}(B)$,
and suppose that $|\nabla u|\in M^{s,\lambda}(B)$ for some
$0<\lambda<1$. Then $u$ is locally $(1-\lambda)$-H\"older continuous on $\tfrac12 B$; that is, there exists some 
$C_*\ge 1$, depending only on the structural constants and $[|\nabla u|]_{M^{s,\lambda}}$,
such that whenever
$x,y\in Y$ with $d_Y(x,y)<R_0/5$, we have $|u(x)-u(y)|\le C_*\, d_Y(x,y)^{1-\lambda}$.
\end{prop}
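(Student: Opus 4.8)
The plan is to run the classical Campanato-type telescoping argument over ball averages: use the $s$-Poincar\'e inequality to convert oscillation of $u$ into $L^s$-averages of $|\nabla u|$, and then use the Morrey bound on $|\nabla u|$ to make the resulting dyadic sum a convergent geometric series.

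First I would fix $x,y\in\tfrac12 B$ with $\rho:=d_Y(x,y)$ small enough (say $\rho<R_0/5$ and $\rho$ smaller than a fixed fraction of $\rad(B)$) that the ball $B_0:=B(x,2\rho)$, together with all the concentric and slightly dilated balls appearing below, is contained in $B$. Working with the precise representative of $u$ whose value at a point equals the limit of its ball-averages there --- which is defined at $\mu$-a.e.\ point, hence on a dense set, since $u\in N^{1,s}(B)\subset L^1_{\loc}$ and $\mu_Y$ has full support --- I would estimate, for such a Lebesgue point $w$ and $0<\sigma\le 2\rho$,
\[
|u_{B(w,\sigma)}-u_{B(w,\sigma/2)}|\le C\,\jint_{B(w,\sigma)}|u-u_{B(w,\sigma)}|\,d\mu_Y\le C\,\sigma\left(\jint_{B(w,L\sigma)}|\nabla u|^s\,d\mu_Y\right)^{1/s},
\]
where $L\ge 1$ is the dilation constant in the $s$-Poincar\'e inequality and $C$ depends only on the doubling and Poincar\'e constants: the first inequality uses $B(w,\sigma/2)\subset B(w,\sigma)$ and doubling, the second is the $s$-Poincar\'e inequality (with $|\nabla u|$ in place of $g_u$, as noted in Section~\ref{Sec:Cheegerishness}). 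By the definition of $[\,\cdot\,]_{M^{s,\lambda}}$, together with doubling to replace $B(w,L\sigma)$ by a comparable ball and absorb $L$, the right-hand side is at most $C\,[|\nabla u|]_{M^{s,\lambda}}\,\sigma^{1-\lambda}$.

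Summing over the dyadic scales $\sigma=2^{-j}\cdot 2\rho$, $j\ge 0$, I get a geometric series of ratio $2^{-(1-\lambda)}<1$ (here is where $\lambda<1$ is used), so that
\[
|u(x)-u_{B_0}|\le\sum_{j\ge 0}\bigl|u_{B(x,2^{-j}\cdot 2\rho)}-u_{B(x,2^{-j-1}\cdot 2\rho)}\bigr|\le C\,[|\nabla u|]_{M^{s,\lambda}}\,\rho^{1-\lambda}.
\]
For $y$ I would note $B(y,\rho)\subset B_0\subset B(y,3\rho)$, so $\mu(B(y,\rho))\approx\mu(B_0)$ by doubling, whence $|u_{B_0}-u_{B(y,\rho)}|\le C\,\jint_{B_0}|u-u_{B_0}|\,d\mu_Y\le C\,[|\nabla u|]_{M^{s,\lambda}}\,\rho^{1-\lambda}$ just as above, and then telescope from $B(y,\rho)$ down to $y$ exactly as for $x$. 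Combining the three bounds by the triangle inequality gives $|u(x)-u(y)|\le C_*\,d_Y(x,y)^{1-\lambda}$ for all Lebesgue points $x,y\in\tfrac12 B$ with $d_Y(x,y)$ small; since such points are dense, $u$ agrees a.e.\ with its unique $(1-\lambda)$-H\"older continuous extension on $\tfrac12 B$, which is the asserted representative.

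The step I expect to demand the most care is the geometric bookkeeping: making sure the dilated balls $B(w,L\sigma)$ produced by the Poincar\'e inequality stay inside $B$ (so the Morrey bound, a statement about $|\nabla u|$ on $B$, applies) and that their radii stay below the threshold $R_0$ in the Morrey seminorm, all while tracking how $L$ and the doubling constant enter $C_*$. This is routine, but it is what pins down exactly how small $d_Y(x,y)$ must be and why the conclusion is stated on $\tfrac12 B$ rather than on all of $B$.
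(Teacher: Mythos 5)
Your proposal is correct and follows essentially the same telescoping argument as the paper: fix two Lebesgue points, build a dyadic chain of balls between them, convert oscillations of ball averages to gradient averages via the $s$-Poincar\'e inequality, and sum the resulting geometric series using the Morrey bound with ratio $2^{-(1-\lambda)}<1$. The only cosmetic difference is that you split the chain into three pieces (from $x$ to $B(x,2\rho)$, across to $B(y,\rho)$, and down to $y$), whereas the paper runs a single two-sided sum over $\mathbb{Z}$ indexing balls centered at $x$ for $i\ge 0$ and at $y$ for $i<0$; both bookkeeping schemes lead to the same estimate and the same constant structure.
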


In the above proposition, $R_0$ is the scale limit in the Morrey space definition, Definition~\ref{def:Morrey}.

\begin{proof}
We first prove the above claimed H\"older estimate for $x,y\in \tfrac12 B$ that are Lebesgue points of $u$; recall
from~\cite{KinLat} (or~\cite[Theorem~9.2.8]{HKST})
that $p$-capacity almost every point in $Y$ is such a 
point. 

Let $x,y\in \tfrac12B$ be Lebesgue points of $u$
such that $d_Y(x,y)<R_0/5$, and set $r=d_Y(x,y)$. For positive integers $i$ we set
$B_i=B(x,2^{1-i}r)$ and $B_{-i}=B(y,2^{1-i}r)$. We also set $B_0=B(x,2r)$.
Then $\lim\limits_{i\to\infty} u_{B_i}=u(x)$ and $\lim\limits_{i\to\infty}u_{B_{-i}}=u(y)$; it follows that
\[
|u(y)-u(x)|\le \sum_{i\in\Z} |u_{B_i}-u_{B_{i+1}}|.
\]
Since $B_{i+1}\subset 4B_i$, by the doubling property of $\mu_Y$ and the $s$-Poincar\'e inequality, we have that
\[
|u(y)-u(x)|\le C\, \sum_{i\in\Z}\jint_{4B_i}|u-u_{4B_i}|\, d\mu_Y
  \le C\, \sum_{i\in \Z} 2^{-|i|}r\, \left(\jint_{4B_i}|\nabla u|^s\, d\mu_Y\right)^{1/s}.
\]
Applying the assumption that $|\nabla u|\in M^{s,\lambda}(Y,\mu_Y)$, we now have
\begin{align*}
|u(y)-u(x)|&\le C\, \sum_{i\in\Z}\, (2^{-|i|}\, r)^{1-\lambda}\, \left((2^{-|i|}r)^{\lambda s}\, \jint_{4B_i}|\nabla u|^s\, d\mu_Y\right)^{1/s}\\
&\le 2C\, [|\nabla u|]_{M^{s,\lambda}}\ r^{1-\lambda}\, \sum_{i\in \Z} 2^{-|i|(1-\lambda)}.
\end{align*}
As $r=d_Y(x,y)$, the claim follows with 
\[
C_*=2C\, [|\nabla u|]_{M^{s,\lambda}}\, \sum_{i\in \Z} 2^{-|i|(1-\lambda)}.
\]
\end{proof}

In our application of this proposition, we will have a ball $B$ in the metric space $Y=\overline{\Om}$
and $s=p>1$. Note that when $u\in N^{1,p}(B\cap\Om)=N^{1,p}(B)$,
by~\cite[Proposition~3.11]{GKS}, we not only have that $p$-capacity almost every point in $B$
is a Lebesgue point, but also that $\nu$-almost 
every point in $B\cap\partial\Om$ is such a point.

\section{Proof of Morrey type estimate for the Neumann problem}\label{proofs}

In this section, we prove Theorem \ref{Morrey type estimate} and Theorem \ref{new regularity}.

\begin{lem}\label{lem:Estimate1}
Under the hypotheses of Theorem~\ref{Morrey type estimate}, 
there is a constant $C>0$ such that for all $0<r<R$, $x_0\in B(z_0,R)$, and $0<\eps<1$ we have
\begin{align}\label{eq:preGiaq}
\int_{B(x_0,r)}|\nabla u|^p\, d\mu  \le &
\,\frac{2C\, \mathcal{M}^{p^\prime}}{p^\prime\, \eps^{1/(p-1)}}\,  \mu(B(x_0,R))\, R^{\kappa p^\prime}
+\notag\\ & + 
\left(\frac{2\eps}{p}+2C\, \left(\frac{r}{R}\right)^{\tau p-p}\,\frac{\mu(B(x_0,r))}{\mu(B(x_0,R))}\, \right)\, \int_{B(x_0,R)}|\nabla u|^p\, d\mu.
\end{align}
\end{lem}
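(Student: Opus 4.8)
The plan is to compare $u$, on the ball $B:=B(x_0,R)$, with its $p$-harmonic replacement, in the spirit of~\cite{Ono}, keeping explicit track of the dependence on the scale $R$, on $\mathcal{M}$, and on the auxiliary parameter $\eps$; the scale exponent $\kappa$ in the statement should work out to be $\kappa=1+\alpha$. First I would fix the $p$-harmonic replacement $v\in N^{1,p}(B)$ of $u$ on $B$, i.e.\ the solution of the Dirichlet problem on $B$ with $v-u\in N^{1,p}_0(B)$, which exists by (H1) since $B\subset B(z_0,2R)$; testing the $p$-harmonicity of $v$ against $u-v$ yields the energy comparison $\int_B|\nabla v|^p\,d\mu\le\int_B|\nabla u|^p\,d\mu$. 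Using $|\nabla u|^p\le 2^{p-1}(|\nabla v|^p+|\nabla(u-v)|^p)$ I would then split
\[
\int_{B(x_0,r)}|\nabla u|^p\,d\mu\le 2^{p-1}\int_{B(x_0,r)}|\nabla v|^p\,d\mu+2^{p-1}\int_B|\nabla(u-v)|^p\,d\mu,
\]
and bound the first term by the gradient decay estimate for $p$-harmonic functions (Lemma~\ref{lem:harm-energy-decay}, applied on $B(x_0,R/2)$, together with the doubling property, and with the trivial monotonicity of the integral when $r$ is comparable to $R$) followed by the energy comparison; this yields precisely the last term $2C(r/R)^{\tau p-p}\tfrac{\mu(B(x_0,r))}{\mu(B(x_0,R))}\int_{B(x_0,R)}|\nabla u|^p\,d\mu$ of~\eqref{eq:preGiaq}. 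Everything then reduces to estimating $A:=\int_B|\nabla(u-v)|^p\,d\mu$.

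To estimate $A$, I would subtract the Euler--Lagrange equation for $v$ from equation~\eqref{eq:Neumann-bar} for $u$, both tested against $w=u-v\in N^{1,p}_0(B)$ (legitimate by density of compactly supported functions together with $\barnu\in(N^{1,p}(\Om))^*$), obtaining
\[
\int_B\bigl(|\nabla u|^{p-2}\nabla u-|\nabla v|^{p-2}\nabla v\bigr)\cdot\nabla(u-v)\,d\mu=\int_B(u-v)\,d\barnu\le\int_B|u-v|\,d|\barnu|.
\]
Write $I:=\int_B|u-v|\,d|\barnu|$. The monotonicity inequalities for the $p$-Laplace vector field bound the left-hand side below by $c(p)A$ when $p\ge 2$, and by $c(p)\int_B|\nabla(u-v)|^2(|\nabla u|+|\nabla v|)^{p-2}\,d\mu$ when $1<p<2$; in the singular range, an application of H\"older's inequality with exponents $\tfrac2p,\tfrac2{2-p}$ together with the energy comparison from the first paragraph reorganizes this. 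In either case one is led to
\[
A\le C\,I^{\theta}\Bigl(\int_B|\nabla u|^p\,d\mu\Bigr)^{1-\theta},\qquad \theta:=\min\{1,\tfrac p2\}.
\]

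Next I would control $I$ by H\"older's inequality with the exponents $p^*,(p^*)'$ from Lemma~\ref{lem:Adams t}, the Adams inequality of that lemma applied to $u-v$, and the growth bound~\eqref{eq:nu-f-growth} in the form $|\barnu|(B)\le\mathcal{M}R^{\alpha}\mu(B)$; since the two resulting factors of $\mathcal{M}R^{\alpha}\mu(B)$ carry exponents that sum to $1$, this gives $I\le C\,\mathcal{M}R^{1+\alpha}\mu(B(x_0,R))^{1/p'}A^{1/p}$. Substituting this into the previous display produces a self-improving inequality for $A$; solving it (using $\theta/p<1$) gives $A\le C\,\mathcal{M}^{p'}R^{(1+\alpha)p'}\mu(B(x_0,R))^{p-1}\bigl(\int_B|\nabla u|^p\,d\mu\bigr)^{2-p}$ when $1<p<2$, and the simpler bound $A\le C\,\mathcal{M}^{p'}R^{(1+\alpha)p'}\mu(B(x_0,R))$ when $p\ge 2$. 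A final application of Young's inequality with parameter $\eps$ (using $\eps^{-(2-p)/(p-1)}\le\eps^{-1/(p-1)}$ for $0<\eps<1$ in the singular range) then splits $2^{p-1}A$ into $\tfrac{2\eps}{p}\int_{B(x_0,R)}|\nabla u|^p\,d\mu$ plus $\tfrac{2C\mathcal{M}^{p'}}{p'\eps^{1/(p-1)}}\mu(B(x_0,R))R^{\kappa p'}$ with $\kappa=1+\alpha$, and combining with the decay term from the first paragraph yields~\eqref{eq:preGiaq}.

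I expect the main obstacle to be the exponent bookkeeping in the singular range $1<p<2$, where the monotonicity of the $p$-Laplacian is degenerate: there the factor $\bigl(\int_B|\nabla u|^p\,d\mu\bigr)^{1-p/2}$ has to be carried through both the self-improvement step and the final Young splitting, and one must arrange matters so that the powers of $\mathcal{M}$, $R$, and $\mu(B(x_0,R))$ end up being exactly $p'$, $(1+\alpha)p'$, and $1$, while the $\eps$-power stays dominated by $\eps^{-1/(p-1)}$. A secondary technical point is verifying the legitimacy of testing the equations against $u-v$ and, in the application of Lemma~\ref{lem:Adams t}, that $p^*\ge 1$ under the structure hypotheses.
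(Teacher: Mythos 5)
Your strategy is valid but genuinely different from the paper's, and the contrast is worth recording. The paper avoids the $p$-Laplace monotonicity inequalities and any $p\ge 2$ versus $1<p<2$ case split entirely. Instead of writing $|\nabla u|^p\le 2^{p-1}(|\nabla v|^p+|\nabla(u-v)|^p)$, the paper decomposes
\[
|\nabla u|^p=\bigl(|\nabla u|^{p-2}\nabla u-|\nabla v|^{p-2}\nabla v\bigr)\cdot(\nabla u-\nabla v)
+|\nabla u|^{p-2}\nabla u\cdot\nabla v+|\nabla v|^{p-2}\nabla v\cdot(\nabla u-\nabla v),
\]
integrates over $B(x_0,r)$, and then enlarges the first integral to $B(x_0,R)$ using only that its integrand is non-negative; the $p$-harmonicity of $v$ then converts that term to $\int_{B(x_0,R)}|\nabla u|^{p-2}\nabla u\cdot(\nabla u-\nabla v)\,d\mu=\int(u-v)\,d\barnu$, after which Adams and Young with $\eps$ handle $I_1$, while the two remaining cross terms on the small ball are controlled by Young's inequality plus Lemma~\ref{lem:harm-energy-decay}. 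This uses the same three ingredients as your argument (Adams inequality, $p$-harmonic gradient decay, $\eps$-Young) but never needs a quantitative lower bound for the monotonicity, which is what forces your $\theta=\min\{1,p/2\}$ dichotomy and the self-improving iteration for $A$.

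Your route does work, and in the end it produces the same $\kappa=1+\alpha$, but there is an arithmetic slip in the singular range that you should fix. Solving the self-improving inequality $A\le C I^{p/2}T^{(2-p)/2}$ with $I\le C\mathcal{M}R^{1+\alpha}\mu(B)^{1/p'}A^{1/p}$ (here $T:=\int_B|\nabla u|^p\,d\mu$) gives
\[
A\le C\,\mathcal{M}^{p}\,R^{(1+\alpha)p}\,\mu(B(x_0,R))^{p-1}\,T^{\,2-p},
\]
with exponents $p$ on $\mathcal{M}$ and $R$, not $p'$ as you wrote. This is not cosmetic: with your stated exponents the subsequent Young step would produce $\mathcal{M}^{p'/(p-1)}R^{(1+\alpha)p'/(p-1)}$ rather than the required $\mathcal{M}^{p'}R^{\kappa p'}$. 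With the corrected exponents, the Young step with conjugate pair $\bigl(\tfrac{1}{2-p},\tfrac{1}{p-1}\bigr)$ does yield
\[
C\mathcal{M}^{p}R^{(1+\alpha)p}\mu(B)^{p-1}T^{2-p}\le (2-p)\,\eps\,T+(p-1)\,C^{1/(p-1)}\mathcal{M}^{p'}R^{(1+\alpha)p'}\mu(B)\,\eps^{-(2-p)/(p-1)},
\]
and $\eps^{-(2-p)/(p-1)}\le\eps^{-1/(p-1)}$ for $0<\eps<1$ closes the estimate as you indicated. For $p\ge 2$ your bound $A\le C\mathcal{M}^{p'}R^{(1+\alpha)p'}\mu(B)$ is correct and the $\eps$-term is not even needed. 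One further small caveat: Lemma~\ref{lem:harm-energy-decay} is stated for $v$ $p$-harmonic on $B(x_0,2R)$ with decay on $0<r\le R/4$, so as you say you need to apply it at scale $R/2$ and absorb the regime $r\sim R$ by doubling, which is the same implicit bookkeeping as in the paper.
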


\begin{proof}
Let $v$ be the solution to the Dirichlet problem (for $p$-harmonicity) on $B(x_0,R)\subset \overline \Om$ with boundary data $u$. Then
\begin{align*}
\int_{B(x_0,r)}|\nabla u|^p\, d\mu
&=\int_{B(x_0,r)}\left(|\nabla u|^{p-2}\nabla u-|\nabla v|^{p-2}\nabla v\right)\,\cdot\,(\nabla u-\nabla v)\, d\mu \,+\\
     &\qquad\quad+\int_{B(x_0,r)}|\nabla u|^{p-2}\nabla u\, \cdot\, \nabla v\, d\mu
        +\int_{B(x_0,r)}|\nabla v|^{p-2}\nabla v\,\cdot\,(\nabla u-\nabla v)\, d\mu\\
 &\le \int_{B(x_0,R)}\left(|\nabla u|^{p-2}\nabla u-|\nabla v|^{p-2}\nabla v\right)\,\cdot\,(\nabla u-\nabla v)\, d\mu\, +\\
      &\qquad\quad+\int_{B(x_0,r)}|\nabla u|^{p-2}\nabla u\, \cdot\, \nabla v\, d\mu
        +\int_{B(x_0,r)}|\nabla v|^{p-2}\nabla v\,\cdot\,(\nabla u-\nabla v)\, d\mu\\
  &=\int_{B(x_0,R)}|\nabla u|^{p-2}\nabla u\,\cdot\,(\nabla u-\nabla v)\, d\mu\,+\\
      &\qquad\quad+\int_{B(x_0,r)}|\nabla u|^{p-2}\nabla u\, \cdot\, \nabla v\, d\mu
        +\int_{B(x_0,r)}|\nabla v|^{p-2}\nabla v\,\cdot\,(\nabla u-\nabla v)\, d\mu\\
  &=\int_{B(x_0,R)}(u-v)\, \, d\barnu+\int_{B(x_0,r)}|\nabla u|^{p-2}\nabla u\, \cdot\, \nabla v\, d\mu\,+\\
       &\qquad\qquad\qquad\qquad\qquad\quad +\int_{B(x_0,r)}|\nabla v|^{p-2}\nabla v\,\cdot\,(\nabla u-\nabla v)\, d\mu\\
  &\le \int_{B(x_0,R)}(u-v)\,  d\barnu+\int_{B(x_0,r)}|\nabla u|^{p-1}\, |\nabla v|\, d\mu\,+\\
     &\qquad\qquad\qquad\qquad\qquad\quad +\int_{B(x_0,r)}|\nabla v|^{p-1}\, |\nabla u|\, d\mu-\int_{B(x_0,r)}|\nabla v|^p\, d\mu.
\end{align*}
In the above, the first inequality was due to the fact that the integrand in the first term on the 
right-hand side of the first line is non-negative;
the next equality was obtained using the fact that $v$ is $p$-harmonic in $B(x_0,R)$ together with the fact that $u-v=0$ in 
$X\setminus B(x_0,R)$. The equality after that was obtained using the fact that $u$ 
satisfies~\eqref{eq:Neumann-bar} together with
the fact that $u-v=0$ in $X\setminus B(x_0,R)$. It follows that
\begin{equation}\label{eq:est1}
\int_{B(x_0,r)}|\nabla u|^p\, d\mu\le 
\int_{B(x_0,R)}(u-v)\,  d\barnu+\int_{B(x_0,r)}\left[|\nabla u|^{p-1}\, |\nabla v|\, +\, |\nabla v|^{p-1}\, |\nabla u|\right]\, d\mu.
\end{equation}

We set
\begin{align*}
I_1&:=\int_{B(x_0,R)}(u-v)\, d\barnu,\\
I_2&:=\int_{B(x_0,r)}|\nabla v|^{p-1}\, |\nabla u|\, d\mu+\int_{B(x_0,r)}|\nabla u|^{p-1}|\nabla v|\, d\mu,
\end{align*}
and note that $u-v\in N^{1,p}_0(B(x_0,R))$. 
Thus, to estimate $I_1$ we utilize the
Adams inequality from Lemma~\ref{lem:Adams t}.
Applying H\"older's inequality and subsequently by~\eqref{eq:nu-f-growth}, and then finally by the Adams inequality, 
we obtain 
\begin{align}\label{bounded-case}
I_1&\le \int_{B(x_0,R)}|u-v|\,  d|\barnu| \notag\\ 
    &\le |\barnu|(B(x_0,R))^{1/p^{*\prime}}\left(\int_{B(x_0,R)}|u-v|^{p^*}\, d|\barnu|\right)^{1/p^{*}}\, \notag \\
    &\le C|\barnu|(B(x_0,R))^{1/p^{*\prime}}  \left(\mathcal{M}\mu(B(x_0,R)) R^{\alpha}\right)^{1/p^{*}} R\left(\jint_{B(x_0,R)}|\nabla (u-v)|^p\, d\mu\right)^{1/p} \notag \\
& \le C \mathcal{M}^{1/p^{*}}|\barnu|(B(x_0,R))^{1/p^{*\prime}} \mu(B(x_0,R))^{1/p^{*}-1/p}\, R^{\alpha/p^{*}+1} \left(\int_{B(x_0,R)}|\nabla (u-v)|^p\, d\mu\right)^{1/p} \notag\\
&\le C\mathcal{M} \mu(B(x_0,R))^{1/p'}\, R^{\alpha +1}    \left(\int_{B(x_0,R)}|\nabla (u-v)|^p\, d\mu\right)^{1/p}.
\end{align}

As $v$ is $p$-harmonic in $B(x_0,R)$ and $v-u\in N^{1,p}_0(B(x_0,R))$, we have that
\[
\int_{B(x_0,R)}|\nabla (u-v)|^p\, d\mu\le 2^{p-1}\int_{B(x_0,R)}\left(|\nabla u|^p+|\nabla v|^p\right)\, d\mu
    \le 2^p\, \int_{B(x_0,R)}|\nabla u|^p\, d\mu.
\]
Thus we obtain, for $\eps>0$,
\[
I_1\le \left[\frac{C}{\eps^{1/p}}\, \mathcal{M}\, \mu(B(x_0,R)^{1/p'}\, R^{\alpha+1}\right]\, 
\left(\eps\, \int_{B(x_0,R)}|\nabla u|^p\, d\mu\right)^{1/p}.
\]
For ease of notation we set
\begin{equation}\label{eq:kappa}
\kappa=
\alpha +1.
\end{equation}
Using Young's inequality, we now get
\begin{align}\label{eq:I1-est}
I_1&\le \frac{\eps}{p}\, \int_{B(x_0,R)}\, |\nabla u|^p\, d\mu+\frac{1}{p^\prime}\, 
\left[\frac{C}{\eps^{1/p}}\, \mathcal{M}\, \mu(B(x_0,R)^{1-\tfrac{1}{p}}\, R^{\alpha +1}\right]^{p^\prime}\notag\\
 &=\frac{\eps}{p}\, \int_{B(x_0,R)}\, |\nabla u|^p\, d\mu+\frac{C\, \mathcal{M}^{p^\prime}}{ \eps^{1/(p-1)}}\, \mu(B(x_0,R))\, R^{(\alpha+1) p^\prime}.
\end{align}
Now we turn our attention to estimating $I_2$. By Young's inequality applied to each of the two terms comprising $I_2$, we know that
\begin{align*}
I_2&\le \int_{B(x_0,r)}\left[\frac{|\nabla u|^p}{2p^\prime}\, +\, \frac{2^{p/p^\prime}|\nabla v|^p}{p}
+\frac{2^{p^\prime/p}|\nabla v|^p}{p^\prime}\, +\, \frac{|\nabla u|^p}{2p}\right]\, d\mu\\
  &=\frac{1}{2}\, \int_{B(x_0,r)}|\nabla u|^p\, d\mu+\left(\frac{2^{p-1}}{p}+\frac{2^{p^\prime-1}}{p^\prime}\right)\, \int_{B(x_0,r)}|\nabla v|^p\, d\mu.
\end{align*}
From Lemma~\ref{lem:harm-energy-decay}, we have 
\[
\int_{B(x_0,r)}|\nabla v|^p\, d\mu\le C_0\, \left(\frac{r}{R}\right)^{\tau p-p}\, \frac{\mu(B(x_0,r))}{\mu(B(x_0,R))}\, \int_{B(x_0,R)}|\nabla v|^p\, d\mu.
\]
Thus, we obtain the estimate 
\begin{align}\label{eq:I2-est}
I_2&\le \frac{1}{2}\, \int_{B(x_0,r)}|\nabla u|^p\, d\mu+C\, \left(\frac{r}{R}\right)^{\tau p-p}\, \frac{\mu(B(x_0,r))}{\mu(B(x_0,R))}\, \int_{B(x_0,R)}|\nabla v|^p\, d\mu\notag\\
&\le \frac{1}{2}\, \int_{B(x_0,r)}|\nabla u|^p\, d\mu+C\, \left(\frac{r}{R}\right)^{\tau p-p}\, \frac{\mu(B(x_0,r))}{\mu(B(x_0,R))}\, \int_{B(x_0,R)}|\nabla u|^p\, d\mu.
\end{align}
Combining~\eqref{eq:I1-est} and~\eqref{eq:I2-est} together with~\eqref{eq:est1}, we obtain the following inequality:
\begin{align*}
\int_{B(x_0,r)}|\nabla u|^p\, d\mu\le 
\frac{\eps}{p}\, \int_{B(x_0,R)}\, |\nabla u|^p\, d\mu+\frac{C\, \mathcal{M}^{p^\prime}}{p^\prime\, \eps^{1/(p-1)}}\, \mu(B(x_0,R))\, R^{\kappa p^\prime}
+\notag\\
+\frac{1}{2}\, \int_{B(x_0,r)}|\nabla u|^p\, d\mu+
C\, \left(\frac{r}{R}\right)^{\tau p-p}\, \frac{\mu(B(x_0,r))}{\mu(B(x_0,R))}\, \int_{B(x_0,R)}|\nabla u|^p\, d\mu.
\end{align*}
Simplifying, we obtain
\begin{align*}
\int_{B(x_0,r)}|\nabla u|^p\, d\mu  \le &
\,\frac{2C\, \mathcal{M}^{p^\prime}}{p^\prime\, \eps^{1/(p-1)}}\,  \mu(B(x_0,R))\, R^{\kappa p^\prime}
+\notag\\ & + 
\left(\frac{2\eps}{p}+2C\, \left(\frac{r}{R}\right)^{\tau p-p}\,\frac{\mu(B(x_0,r))}{\mu(B(x_0,R))}\, \right)\, \int_{B(x_0,R)}|\nabla u|^p\, d\mu
\end{align*}
as desired.
\end{proof}
For subsequent use, we set
$\delta:=\tau p-p-\kappa p'$ and note here that $\delta>0$.

At this point we recall that $z_0\in \overline \Omega$ is fixed and $0<r<R$. If we  set 
 $\phi(r)=\int_{B(x,r)}|\nabla u|^p\, d\mu$ and $\omega(r)=\mu(B(x,r))$, then the previous estimate reads as 
 
 \[
\phi(r)\le A_1\, \left[\frac{\omega(r)}{\omega(R)}\, \left(\frac{r}{R}\right)^{\tau p-p}+\eps\right]\, \phi(R)+A_2(\varepsilon)\, \omega(R)\, R^{\kappa p'}
\]
for every $0<r<R$ and for every $0<\varepsilon<1$. In order to conclude the proof
of Theorem~\ref{Morrey type estimate}, we invoke Lemma~\ref{lem:Gia-Mak} below.
The following lemma is a version of~\cite[Lemma~2.1, page~86]{Gia} 
for measures that are not necessarily   Ahlfors regular. Our  version is essentially the same 
as~\cite[Lemma 2.7]{Mak-Holder} with the choice of $\delta=\tau p-p-\kappa p'$ and 
$\beta =\kappa p'$, but as we use it with different parameters, we included the proof here for the sake of completeness.

\begin{lem}\label{lem:Gia-Mak}
Let $\phi$ and $\omega$ be two non-negative and  non-decreasing functions on an interval $(0,R_0]$ and assume that 
$\alpha<\tau p-p-\tau$ and that there are positive
constants $C_1$ and $s$ such that for all $0<\lambda\le 1$ and $0<r<R$ we have
\begin{equation}\label{eqn:omega}
\frac{\omega(\lambda r)}{\omega(r)}\ge C_1\, \lambda^s,
\end{equation}
and that there is a constant $A_{1}> 1$ and a function $A_{2}:(0,\infty)\rightarrow(0,\infty)$ such that
\[
\phi(r)\le A_1\, \left[\frac{\omega(r)}{\omega(R)}\, \left(\frac{r}{R}\right)^{\tau p-p}+\eps\right]\, \phi(R)+A_2(\varepsilon)\, \omega(R)\, R^{\kappa p'}
\]
for every $0<r<R$ and for every $0<\varepsilon<1$.
Then there are positive constants $C$ and $\varepsilon_{0}$ that depend only on $p, C_1, s, A_1, A_2, \kappa=\alpha+1$,
and $\delta=\tau p-p-\kappa p'$ so 
that for every $0<r<R$ we have
\begin{equation}\label{eq:Morrey-Alternate}
\phi(r)\le C\, \left[\frac{\omega(r)}{\omega(R)}\, \left(\frac{r}{R}\right)^{\kappa p'}\, \phi(R)+A_2(\varepsilon_{0})\, \omega(r)\, r^{\kappa p'}\right].
\end{equation}
\end{lem}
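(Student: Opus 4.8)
The plan is to run a hole-filling iteration in the spirit of Giaquinta's lemma, but with one essential modification: since the only control we have on $\omega$ is the lower bound~\eqref{eqn:omega} (a reverse-doubling estimate), we cannot iterate directly on $\phi$, and we instead iterate on the renormalized quantity
\[
H(r):=\frac{\phi(r)}{\omega(r)\, r^{\kappa p'}},
\]
whose boundedness is essentially what~\eqref{eq:Morrey-Alternate} encodes. (We may assume $\omega>0$ on $(0,R_0]$, which is automatic in all our applications, where $\omega(r)=\mu(B(z_0,r))$.)

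First I would recast the hypothesis in terms of $H$: dividing the assumed inequality by $\omega(r)\,r^{\kappa p'}$ and using $\phi(R)=H(R)\,\omega(R)\,R^{\kappa p'}$, the leading term turns into $A_1(r/R)^{\tau p-p-\kappa p'}H(R)=A_1(r/R)^{\delta}H(R)$, where crucially $\delta=\tau p-p-\kappa p'>0$ precisely because of the hypothesis $\alpha<\tau p-p-\tau$; the remaining two terms acquire a factor $\tfrac{\omega(R)}{\omega(r)}(R/r)^{\kappa p'}$, which by~\eqref{eqn:omega} is at most $C_1^{-1}(R/r)^{s+\kappa p'}$. Evaluating at $r=\theta R$ for a parameter $\theta\in(0,1)$ still to be fixed, this reads
\[
H(\theta R)\le A_1\theta^{\delta}H(R)+A_1C_1^{-1}\eps\,\theta^{-(s+\kappa p')}H(R)+A_2(\eps)\,C_1^{-1}\theta^{-(s+\kappa p')},
\]
valid for all $R\le R_0$ and all $\eps\in(0,1)$. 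Now I would choose $\theta$ so small that $A_1\theta^{\delta}\le\tfrac14$ (possible since $\delta>0$), and then $\eps=\eps_0\in(0,1)$ so small that $A_1C_1^{-1}\eps_0\theta^{-(s+\kappa p')}\le\tfrac14$, obtaining the clean recursion $H(\theta R)\le\tfrac12 H(R)+D$ with $D:=A_2(\eps_0)C_1^{-1}\theta^{-(s+\kappa p')}$ depending only on the admissible data.

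Iterating the recursion gives $H(\theta^kR)\le 2^{-k}H(R)+2D$ for all $k\ge0$. To descend to an arbitrary $0<r<R$, pick $k\ge0$ with $\theta^{k+1}R<r\le\theta^kR$; monotonicity of $\phi$ gives $\phi(r)\le\phi(\theta^kR)=\omega(\theta^kR)(\theta^kR)^{\kappa p'}H(\theta^kR)$, and I would then invoke three elementary comparisons, each in the direction dictated by a sign I have tracked: $\omega(\theta^kR)\le C_1^{-1}\theta^{-s}\omega(r)$ (from~\eqref{eqn:omega} with radius $\theta^kR$ and ratio $r/(\theta^kR)\in(\theta,1]$), $(\theta^kR)^{\kappa p'}\le r^{\kappa p'}$, and $2^{-k}\le\theta^{k\kappa p'}\le(r/R)^{\kappa p'}$ — the last two because $\kappa p'<0$, $\theta^kR\ge r$, and $\theta^{\kappa p'}>1>\tfrac12$. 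Substituting back $\phi(R)=H(R)\omega(R)R^{\kappa p'}$ produces precisely~\eqref{eq:Morrey-Alternate} with $C=\max\{C_1^{-1}\theta^{-s},\,2C_1^{-2}\theta^{-(2s+\kappa p')}\}$, a constant depending only on $p,C_1,s,A_1,\kappa$ and $\delta$, uniform in the pair $(r,R)$.

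The main obstacle is conceptual rather than computational: in Giaquinta's lemma the leading coefficient $(r/R)^{\alpha}$ has a positive exponent and one iterates on $\phi$ itself, whereas here the leading coefficient $\tfrac{\omega(r)}{\omega(R)}(r/R)^{\tau p-p}$ is not small on its own — since $(r/R)^{\tau p-p}\ge1$ — and the smallness lives in the interplay with $\omega$, which we only control from below. Renormalizing by $\omega(r)r^{\kappa p'}$ is exactly what converts the unfavorable exponent $\tau p-p$ into the favorable positive exponent $\delta$; once that is done the remaining work is the classical iteration, and the only care needed is that every passage between the dyadic scale $\theta^kR$ and the given scale $r$ respects the signs of $\delta>0$ and $\kappa p'<0$ together with the monotonicity of $\phi$ and $\omega$.
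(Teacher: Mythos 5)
Your proof is correct, and it takes a genuinely different route from the paper's. The paper iterates on $\phi$ itself: it first fixes a scale $\lambda_0$ via $(2A_1)^2\lambda_0^{\tau p-p-\kappa p'}=1$ and a parameter $\eps_0=C_1\lambda_0^{s+\tau p-p}$ tuned so that $\eps_0\lambda_0^{p-\tau p}\le\omega(\lambda_0R)/\omega(R)$, absorbs the $\eps$-term into the leading coefficient, and then telescopes the resulting one-step decay, carefully tracking the ratio $\omega(\lambda_0^kR)/\omega(R)$ at each stage. Your renormalization $H(r)=\phi(r)/(\omega(r)r^{\kappa p'})$ decouples those two concerns: dividing the hypothesis by $\omega(r)r^{\kappa p'}$ converts the unfavourable exponent $\tau p-p$ into the favourable $\delta=\tau p-p-\kappa p'>0$ at a single stroke, after which the choice of $\theta$ and $\eps_0$ reduces to the standard ``fix $\theta$ to beat $A_1\theta^\delta$, then $\eps_0$ to beat the $\eps$-term'' pattern and yields the elementary contraction $H(\theta R)\le\tfrac12H(R)+D$. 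This makes the structural mechanism of the lemma more visible and is closer in spirit to the way such iteration lemmas are usually organized; the paper's version, by contrast, simultaneously serves as a proof of the marginally sharper decay $(r/R)^{\kappa p'+\delta/2}$ (visible in its final display), which your argument does not reproduce but which the lemma does not claim.

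One small point worth tidying: in the descent step, the chain $2^{-k}\le\theta^{k\kappa p'}\le(r/R)^{\kappa p'}$ is true (because $\kappa p'<0$ forces $\theta^{\kappa p'}>1$), but it is also more than you want. If you literally replace $2^{-k}$ by $(r/R)^{\kappa p'}$, then after also bounding $(\theta^kR)^{\kappa p'}\le r^{\kappa p'}$ and substituting $\phi(R)=H(R)\omega(R)R^{\kappa p'}$, the first term comes out as a multiple of $\frac{\omega(r)}{\omega(R)}(r/R)^{2\kappa p'}\phi(R)$, which (since $\kappa p'<0$) is \emph{larger} than the target $\frac{\omega(r)}{\omega(R)}(r/R)^{\kappa p'}\phi(R)$ and cannot be absorbed into a constant. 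The correct and simpler bound is just $2^{-k}\le1$: the $(r/R)^{\kappa p'}$ in the conclusion is already produced by $(\theta^kR)^{\kappa p'}\le r^{\kappa p'}$ together with the factor $R^{-\kappa p'}$ that appears when you write $H(R)=\phi(R)/(\omega(R)R^{\kappa p'})$. Your stated constant $C=\max\{C_1^{-1}\theta^{-s},\,2C_1^{-2}\theta^{-(2s+\kappa p')}\}$ is exactly what $2^{-k}\le1$ gives, so the bookkeeping you actually carried out is right; it is only the displayed chain that overshoots.
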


\begin{remark} We note explicitly that the quantity $s$ from \eqref{eqn:omega} only appears in the constants $C,\eps_0$ and not in the decay exponent itself.

Note also that we need $\tau p-p-\kappa p'>0$ in the proof of the above lemma. 
Recalling that $\kappa=\alpha+1$, this is equivalent to the condition
$\alpha<\tau p-p-\tau$ stated in the lemma.
\end{remark}

\begin{proof}
If $0<\lambda<1$, we have
\[
\phi(\lambda R)\le A_{1} \lambda^{\tau p-p}\left[   \frac{\omega(\lambda R)}{\omega(R)} 
    + \varepsilon \lambda^{p-\tau p}  \right]\phi(R)+A_{2}(\varepsilon)\omega(R)R^{\kappa p'}.
\]
Let us choose $0<\lambda_{0}<1$ so that $(2A_{1})^{2}\lambda_{0}^{\tau p-p-\kappa p'}=1$ and 
we choose $\eps_0>0$ such that
$\varepsilon_{0}=C_{1}\lambda_{0}^{{s+\tau p-p}}$. Then it follows by \eqref{eqn:omega} that
\begin{equation}\label{eq:lambda0}
\varepsilon_{0}\lambda_0^{p-\tau p}=C_1\lambda_0^s\le \frac{\omega(\lambda_{0}R)}{\omega(R)}.
\end{equation}
Consequently, we have
\[
\begin{split}
\phi(\lambda_0 R) &\le 2 A_{1}\lambda_{0}^{\tau p -p}\,\frac{\omega(\lambda_{0}R)}{\omega(R)}\,\phi(R)
  + A_2(\varepsilon_{0})\omega(R)R^{\kappa p'}\\
&\le\lambda_{0}^{(\tau p -p+\kappa p')/2}\,\frac{\omega(\lambda_{0}R)}{\omega(R)}\,\phi(R)
  + \frac{A_2 (\varepsilon_{0})}{C_{1}}\, \lambda_{0}^{-s}\omega(\lambda_0 R)R^{\kappa p'}.
\end{split}
\]
To simplify notation, recall that we set $\delta=\tau p-p-\kappa p'$, and so $\tau p-p+\kappa p'=\delta+2\kappa p'$.
In obtaining the second term in 
the second inequality, we used~\eqref{eq:lambda0}. 
By iterating this estimate, 
we obtain that for all positive integers, we have 
\[
\begin{split}
\phi(\lambda_{0}^{k}R) & \le
\lambda_{0}^{\kappa p'+\delta/2}\,\frac{\omega(\lambda_{0}^{k}R)}{\omega(\lambda_{0}^{k-1}R)}\phi(\lambda_{0}^{k-1}R)
+ \frac{A_2 (\varepsilon_{0})}{C_{1}}\,\lambda_{0}^{-s}\omega(\lambda_{0}^{k} R)\left(\lambda_{0}^{k-1}R\right)^{\kappa p'}\\
 & \le
 \lambda_{0}^{k\,(\kappa p'+\delta/2)}\,\frac{\omega(\lambda_{0}^{k}R)}{\omega(R)}\phi(R)
+\frac{ A_2 (\varepsilon_{0})}{C_{1}}\,\lambda_{0}^{-s}R^{\kappa p'} \lambda_{0}^{(k-1)\kappa p'}\omega(\lambda_{0}^{k} R)\sum_{j=0}^{k-1}\left(\lambda_{0}^{\delta/2}\right)^{j}\\
& \le
 \lambda_{0}^{k\, (\kappa p'+\delta/2)}\,\frac{\omega(\lambda_{0}^{k}R)}{\omega(R)}\phi(R)
 + \frac{A_2 (\varepsilon_{0})\, R^{\kappa p'}\, \lambda_{0}^{(k-1)\kappa p'-s}\, \omega(\lambda_{0}^{k} R)}{C_1\,(1-\lambda_{0}^{\delta/2})}.
\end{split}
\]
Notice that the series corresponding to the sum above converges exactly when $\tau p-p-\kappa p'=\delta>0$. 
Now we choose the unique positive integer $k$ so that $\lambda_{0}^{k+1}R<r<\lambda_{0}^{k}R$. Then 
\begin{align*}
\phi(r)&\le \phi(\lambda_0^kR)\\
 &\le  \lambda_{0}^{k\, (\kappa p'+\delta/2)}\frac{\omega(\lambda_{0}^{k}R)}{\omega(R)}\phi(R)
 + \frac{A_2 (\varepsilon_{0})\, R^{\kappa p'}\, \lambda_{0}^{(k-1)\kappa p'-s}\, \omega(\lambda_{0}^{k} R)}{C_1\,(1-\lambda_{0}^{\delta/2})}\\
 &\le
 \left(\frac{r}{R}\right)^{\kappa p'+\delta/2}\, 
     \frac{\omega(\lambda_0^{k+1}R)}{\omega(R)}\, \frac{1}{C_1\, \lambda_0^s}\, \phi(R)
   +\frac{A_2 (\varepsilon_{0})\, R^{\kappa p'}\, \lambda_{0}^{(k-1)\kappa p'-s}\, \omega(\lambda_{0}^{k+1} R)}{C_1\,(1-\lambda_{0}^{\delta/2})}
   \frac{1}{C_1\, \lambda_0^s}\\
  &\le \frac{1}{C_1\, \lambda_0^{s}}\, \left(\frac{r}{R}\right)^{\kappa p'+\delta/2}\, \frac{\omega(r)}{\omega(R)}\, \phi(R)
    +\frac{A_2 (\varepsilon_{0})\, R^{\kappa p'}\, \lambda_{0}^{k\, \kappa p'}}{C_1^2\, \lambda_0^{\kappa p'+2s}\,(1-\lambda_{0}^{\delta/2})}\,  \omega(r)\\
  &\le \frac{1}{C_1\, \lambda_0^{s}}\, \left(\frac{r}{R}\right)^{\kappa p'+\delta/2}\, \frac{\omega(r)}{\omega(R)}\, \phi(R)
    +\frac{A_2 (\varepsilon_{0})}{C_1^2\, \lambda_0^{\kappa p'+2s}\,(1-\lambda_{0}^{\delta/2})}\,  \omega(r)\, r^{\kappa p'}.
\end{align*}
In the last step, we used the fact that $\kappa=\alpha+1<0$.
We can set
\[
C=\max\bigg\lbrace\frac{1}{C_1\, \lambda_0^{s+\kappa p' +\delta/2}},\ \frac{1}{C_1^2\, (1-\lambda_0^{\delta/2})\, \lambda_0^{\kappa p'+2s}}\bigg\rbrace.\qedhere
\]
\end{proof}

\begin{proof}[Proof of Theorem~\ref{Morrey type estimate}]
{We note that
$\omega(r):=\mu(B(x_0,r))$ satisfies the  hypothesis \eqref{eqn:omega}
 with the choice of $s=Q_\mu$,
where $Q_\mu$ is the lower mass bound exponent associated with $\mu$ as in~\eqref{eq:lower-mass-exp}.}
We apply Lemma~\ref{lem:Gia-Mak} to the 
inequality obtained in Lemma~\ref{lem:Estimate1} with the choice of $\omega(r)=\mu(B(x_0,r))$ and
$\phi(r):=\int_{B(x_0,r)}|\nabla u|^p\, d\mu$ to obtain that
when $0<r<R$,
\[
\int_{B(x_0,r)}|\nabla u|^p\, d\mu
\le C\, \left[\, \left(\frac{r}{R}\right)^{\kappa p'}\, \frac{\mu(B(x_0,r))}{\mu(B(x_0,R))}\, \int_{B(x_0,R)}|\nabla u|^p\, d\mu
+A_2(\eps_0)\, \mu(B(x_0,r))\, r^{\kappa p'}\right].
\]
Letting
\[
C_1:= C\, \left[ R^{-\kappa p'}\, \jint_{B(x_0,R)}|\nabla u|^p\, d\mu+A_2(\eps_0)\right],
\]
it follows that
\[
\jint_{B(x_0,r)}|\nabla u|^p\, d\mu\le C_1\, r^{\kappa p'}.
\]
Since $\kappa p'= p\, \tfrac{1+\alpha}{p-1}$, it follows that $|\nabla u|\in M^{p,\tfrac{1+\alpha}{1-p}}(B(z_0,R))$
with the Morrey scale $R_0=R$. This completes the proof of the first part of Theorem~\ref{Morrey type estimate}.
The last part now follows from an application of Proposition~\ref{prop:Luca-Campanato}.
\end{proof}

Next we turn to the proof of Theorem~\ref{new regularity}.

\begin{proof}[Proof of Theorem~\ref{new regularity}]
The first statement follows at once from Theorem \ref{Morrey type estimate} applied to the measure 
$d \bar \nu=f d\nu$. In fact, since $f\in M^{1,-(\alpha+\Theta)}(\partial\Om,\nu)$ then in view of 
Lemma~\ref{membership-Morrey}, we know that
\eqref{eq:nu-f-growth} holds, and thus 
Theorem~\ref{Morrey type estimate}  yields the desired conclusion.

To prove the second statement, without loss of generality we may assume  $f\ge 0$ on $B_{4R}$.
As $f\ge 0$ on
$B_{4R}$ and $u$ is the solution to the Neumann problem with measure data $f\, d\nu$, therefore
$u$ is $p$-superharmonic on $B_{4R}$
and hence by~\cite[Lemma~4.8]{BMcSh} we know that
for $x\in B_R$ and $0<r\le R$,
\[
\frac{r^p}{\mu(B(x,r))}\, \int_{B(x,r)}|f|\, d\nu\le C\, \left(\sup_{B(x,2r)}u\, -\inf_{B(x,2r)}u\right)^{p-1}.
\]
So if $u$ is $\lambda$-H\"older continuous on $B_{2R}$,
we must have 
\[
\frac{1}{\mu(B(x,r))}\int_{B(x,r)}|f|\, d\nu\le C\, r^{-p}\, r^{\lambda(p-1)}=C\, r^{\lambda (p-1)-p}.
\]
Thus, $f$ satisfies the decay condition of~\eqref{eq:Mor-f} with $\alpha=\lambda(p-1)-p$. 
\end{proof}

\section{An improved H\"older continuity for solutions to fractional $p$-Laplacian-type equations
}

In this section, we apply the results from the previous section to prove sharp H\"older continuity for solutions of PDE involving fractional  powers of $p$-Laplacian operators on a compact doubling metric measure space $(Z,d_Z,\nu)$. We apply the discussion of the previous sections to
the situation where $\overline\nu$ is given by $d\overline\nu=f\, d\nu$ where $f\in L^{p'}(\partial\Om, \nu)$ represents the right hand side of the nonlocal PDE. 

 For $0<\theta<1$ and $1<p<\infty$ we will consider the following
Besov energy:
\begin{equation}\label{besov energy}
\Vert u\Vert_{\theta,p}^p:=
\int_{Z}\int_{Z}\frac{|u(y)-u(x)|^p}{d_Z(x,y)^{\theta p}\, \nu(B(x,d_z(x,y)))}\, d\nu(y)\, d\nu(x),
\end{equation}
and set $B^\theta_{p,p}(Z)$ to be the space of all $L^p$-functions for which this energy is finite.

We invoke the  uniformization result
 in \cite{BBS}: {\it 
given parameters $1<p<\infty$ and $0<\theta<1$,
every  doubling metric measure space $(Z,d_Z,\nu)$ arises as the boundary of a uniform domain $(\Om,d_X)$ that is equipped
with a measure $\mu$ so that the metric measure space $X=\overline{\Om}=\Om\cup Z$, together with
$Z=\partial\Om$, satisfies conditions~(H0),~(H1) and~(H2), with  $\Theta=p(1-\theta)$. The metric on $\partial \Omega$ 
is induced by the metric on $\Omega$ and while it may not coincide with the original metric $d_Z$ on $Z$, it is in the same bi-Lipschitz class. }

After choosing a Cheeger differential structure $\nabla$ on $\Om$, 
 we proved in \cite{CKKSS,CGKS} that:
\begin{enumerate}
\item For each  function $u\in B^\theta_{p,p}(Z)$, one can find $\widehat{u}$, the unique 
Cheeger $p$-harmonic function in $N^{1,p}(\Om)$ such that
$\widehat{u}$ has 
trace $Tr(\widehat{u})=u$ $\nu$-almost everywhere on $Z$.

\item The Besov norm $\Vert u\Vert_{\theta,p}$ is equivalent to $p$-energy of the extension $\widehat{u}$ of $u$, i.e.  $\int_\Om|\nabla \widehat{u}|^p\, d\mu\approx \Vert u\Vert_{\theta,p}^p$. We then set
\begin{equation}\label{eq:ET-go-home}
\mathcal{E}_T(u,v):=\int_\Om|\nabla\widehat{u}|^{p-2}\nabla\widehat{u}\cdot\nabla \widehat{v}\, d\mu.
\end{equation}
\end{enumerate}

\begin{definition} A function $u\in B^\theta_{p,p}(Z)$ is in the domain of the fractional $p$-Laplacian operator
$(-\Delta_p)^{\theta}$ if there is a function $f\in L^{p'}(Z, \nu)$ such that the integral identity
\[
\mathcal{E}_T(u,\pip)=\int_Z\pip\, f\, d\nu
\]
holds for every  $\pip\in B^\theta_{p,p}(Z)$. 
We then denote
\[
(-\Delta_p)^{\theta} u=f\in L^{p'}(Z,  \nu).
\]
\end{definition}

As a consequence of Theorem~\ref{new regularity}, we obtain the following regularity result for solutions of the fractional $p$-Laplacian.
In what follows, $\delta_F$ is as in Remark~\ref{rem:fat}, and $\tau$ is the interior H\"older regularity assumption, both of which
are determined by the constants for the structural conditions associated with the uniformization $\Om$ of the hyperbolic filling of $Z$
as described at the beginning of the present section.
 
\begin{thm}\label{nonlocal} 
In the hypotheses above, let $u\in B^\theta_{p,p}(Z)$ be a solution to the equation 
\begin{equation}\label{nh}
(-\Delta_p)^\theta u =f,
\end{equation}
for $\theta\in (0,1)$ and $1<p<\infty$. Fix $\xi\in Z$ and $R_0>0$.
\begin{enumerate}
\item If $f\in L^{p'}(Z, \nu)\cap M^{1,-(\alpha+\Theta)}( B(\xi,4R_0),\nu)$, then 
$u$ is $(1-\lambda)$-H\"older continuous on $B(\xi,R_0))$ with $\lambda=\frac{1+\alpha}{1-p} \in (0,1)$.

\item If  
$f$ does not change sign in the ball $B(\xi,4R_0)$ and $u$ is $(1-\lambda)$-H\"older continuous on $B(\xi,2R_0)$ for
some $0<\lambda<1$, then with $\lambda_0\in (0,1)$ given by the equation $1-\lambda_0=\min\{1-\lambda, \tau,\delta_F\}$,
necessarily $f$ belongs to the Morrey space 
$M^{1,-(\alpha+\Theta)}(B(\xi,R_0),\nu)$
with $\alpha=(1-\lambda_0)(p-1)-p$.
\end{enumerate}
\end{thm}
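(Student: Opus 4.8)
The plan is to transfer the nonlocal equation on $Z$ to a Neumann problem on the uniformized domain and then invoke the results of Section~\ref{proofs}. By the uniformization of~\cite{BBS}, I would realize $Z$ as $\partial\Om$ for a uniform (hence locally John) domain $\Om$ carrying a doubling measure $\mu$ satisfying (H0)--(H2) with $\Theta=p(1-\theta)$, which is admissible since $0<\theta<1$. By~\cite{CKKSS,CGKS}, every $u\in B^\theta_{p,p}(Z)$ has a unique Cheeger $p$-harmonic extension $\widehat u\in N^{1,p}(\Om)$ with trace $u$ on $Z$, and $(-\Delta_p)^\theta$ is defined through the pairing $\mathcal{E}_T$ of~\eqref{eq:ET-go-home}. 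The first --- and key --- step is to verify the equivalence
\[
(-\Delta_p)^\theta u=f\qquad\Longleftrightarrow\qquad \text{$\widehat u$ is a weak solution of~\eqref{eq:Neumann} with data $f$.}
\]
To see this I would decompose an arbitrary test function $v\in N^{1,p}(\Om)$ as $v=\widehat w+(v-\widehat w)$, where $w:=Tv$; by the trace theorem of Section~\ref{s:trace} one has $w\in B^{1-\Theta/p}_{p,p}(\partial\Om,\nu)=B^\theta_{p,p}(Z)$ since $1-\Theta/p=\theta$, and $v-\widehat w\in N^{1,p}_0(\Om)$ because $\widehat w$ has trace $w$ as well. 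By $p$-harmonicity of $\widehat u$ in $\Om$, the integral $\int_\Om|\nabla\widehat u|^{p-2}\nabla\widehat u\cdot\nabla(v-\widehat w)\,d\mu$ vanishes, and the right-hand side of~\eqref{eq:Neumann} sees $v$ only through its trace (as $\nu$ is carried by $\partial\Om$), which for $v-\widehat w$ is zero. Hence both sides of~\eqref{eq:Neumann} for $v$ reduce to the corresponding quantities for $\widehat w$, namely $\mathcal{E}_T(u,w)$ and $\int_Z wf\,d\nu$, and these agree for every admissible $v$ precisely when $(-\Delta_p)^\theta u=f$.

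For part~(1), I would first use Lemma~\ref{membership-Morrey} to convert the hypothesis $f\in M^{1,-(\alpha+\Theta)}(B(\xi,4R_0),\nu)$ into the growth bound~\eqref{eq:nu-f-growth} for the measure $d\barnu=f\,d\nu$ at boundary points near $\xi$. Then, for $\alpha$ in the range $-p<\alpha<-p(1-\tau)-\tau$ in which Theorem~\ref{Morrey type estimate} is effective, the localized form of that theorem (equivalently Corollary~\ref{Cor: Morrey type estimate}), applied with $z_0=\xi$, gives $|\nabla\widehat u|\in M^{p,\frac{1+\alpha}{1-p}}$ on a ball about $\xi$, and Proposition~\ref{prop:Luca-Campanato} then makes $\widehat u$ locally $\tfrac{p+\alpha}{p-1}$-H\"older continuous on $B(\xi,R_0)$. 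Restricting $\widehat u$ to $Z=\partial\Om$ --- whose metric is bi-Lipschitz to the one induced from $\Om$ --- and using that $u=\widehat u\vert_Z$ $\nu$-a.e.\ once $\widehat u$ is continuous up to the boundary, I would conclude that $u$ has a $\tfrac{p+\alpha}{p-1}$-H\"older representative on $Z\cap B(\xi,R_0)$; since $\tfrac{p+\alpha}{p-1}=1-\tfrac{1+\alpha}{1-p}=1-\lambda$, this is the assertion.

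For part~(2), I would run the extension in reverse. From the hypothesis, $\widehat u$ is a $p$-harmonic function in $\Om$ whose trace on $Z\cap B(\xi,2R_0)$ is $(1-\lambda)$-H\"older. The codimensionality hypothesis (H2) gives the uniform $p$-fatness of $Z$ in $\overline\Om$ via Lemma~\ref{lem:Loewner}, which feeds the boundary-regularity statement of Remark~\ref{rem:fat}: a $p$-harmonic function with $\beta$-H\"older trace on $Z\cap B(\xi,r)$ is $\min\{\beta,\tau,\delta_F\}$-H\"older on $\overline\Om\cap B(\xi,r/2)$. Applying this with $\beta=1-\lambda$ and $r=2R_0$ yields that $\widehat u$ is $(1-\lambda_0)$-H\"older on $\overline\Om\cap B(\xi,R_0)$, where $1-\lambda_0=\min\{1-\lambda,\tau,\delta_F\}$. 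Since $f$ has a fixed sign on $B(\xi,4R_0)\supset B(\xi,R_0)$, the Neumann solution $\widehat u$ is $p$-superharmonic or $p$-subharmonic there, so part~(2) of Theorem~\ref{new regularity} --- which rests on the capacitary bound of~\cite[Lemma~4.8]{BMcSh} --- applies on a ball about $\xi$ contained in $B(\xi,R_0)$ with H\"older exponent $1-\lambda_0$, and produces $f\in M^{1,-((1-\lambda_0)(p-1)-p+\Theta)}$ near $\xi$; as $\alpha=(1-\lambda_0)(p-1)-p$, this is the claimed Morrey membership.

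The hardest part will be the equivalence in the first step: it requires matching the admissible test classes on the two sides --- every $v\in N^{1,p}(\Om)$ must restrict to an admissible $w\in B^\theta_{p,p}(Z)$, and conversely every element of $B^\theta_{p,p}(Z)$ must have a $p$-harmonic extension in $N^{1,p}(\Om)$ --- which is precisely where the trace and extension theory of Section~\ref{s:trace} and the identity~\eqref{eq:ET-go-home} of~\cite{CKKSS,CGKS} are essential. The remaining difficulties are bookkeeping: tracking the radii across the nested balls $B(\xi,R_0)\subset B(\xi,2R_0)\subset B(\xi,4R_0)$ and verifying that $\alpha$ (respectively the exponent $1-\lambda_0$) stays inside the ranges required by Theorem~\ref{Morrey type estimate}, Remark~\ref{rem:fat}, and part~(2) of Theorem~\ref{new regularity}.
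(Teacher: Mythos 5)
Your proposal is correct and follows the same path as the paper: reduce via the hyperbolic-filling uniformization to a Neumann problem on $\Om$, apply Theorem~\ref{Morrey type estimate} (through Corollary~\ref{Cor: Morrey type estimate}) and Proposition~\ref{prop:Luca-Campanato} for part~(1), and Remark~\ref{rem:fat} together with \cite[Lemma~4.8]{BMcSh} for part~(2). The step you usefully spell out --- that $(-\Delta_p)^\theta u=f$, stated only against $p$-harmonic extensions of Besov test functions, forces $\widehat u$ to solve \eqref{eq:Neumann} against every $v\in N^{1,p}(\Om)$ via the splitting $v=\widehat w+(v-\widehat w)$ with $w=Tv$ --- is left tacit in the paper's proof; just note that concluding $v-\widehat w\in N^{1,p}_0(\Om)$ from the vanishing of its trace uses the uniform $p$-fatness of $\partial\Om$ from Lemma~\ref{lem:Loewner}, not equality of traces alone.
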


\begin{proof} 
In the following, we extend the measure $\nu$ to all of $X$ by setting $\nu(\Om)=0$.
We first address the second claim, and note that the proof is very similar to the argument for the second statement of 
Theorem~\ref{new regularity} .
Without loss of generality we  may assume that $f\ge 0$ on $B(\xi,4R_0)$.
From~\cite[Lemma~4.8]{BMcSh},  it follows that as $f\ge 0$ on
$B(\xi,4R_0)$ and $\widehat{u}$ is the solution to the Neumann problem with measure data $f\, d\nu$ in $\Om$, then 
$\widehat{u}$ is $p$-superharmonic on $B_{\overline\Om}(\xi,4R_0)=\{y\in\overline\Om:d_X(\xi,y)<4R_0\}$,
and consequently for every $x\in B_{\overline\Om}(\xi,R_0)$ and $0<r\le R_0$, we have
\[
\frac{r^p}{\mu(B_{\overline\Om}(x,r))}\, \int_{B_{\overline\Om}(x,r)}|f|\, d\nu\le C\, \left(\sup_{B_{\overline\Om}(x,2r)}\widehat{u}\, -\inf_{B_{\overline\Om}(x,2r)}\widehat{u}\right)^{p-1}.
\]
Since $u=Tr(\widehat{u})$ is $(1-\lambda)$-H\"older continuous on $B(\xi,2R_0)$ and $1-\lambda<\tau$, then 
{from Lemma~\ref{lem:Loewner} and the subsequent Remark~\ref{rem:fat},}
we must have that $\widehat{u}$ is also ${(1-\lambda_0)}$-H\"older continuous in $B_{\overline\Om}(\xi,2R_0)$ 
and so
\[
\frac{1}{\mu(B_{\overline{\Om}}(x,r))}\int_{B_{\overline{\Om}}(x,r)}|f|\, d\nu\le C\, r^{-p}\, r^{(1-\lambda)(p-1)}=C\, r^{(1-\lambda)(p-1)-p}.
\]
Thus $f$ satisfies the decay condition of~\eqref{eq:Mor-f} with $\alpha={(1-\lambda_0)}(p-1)-p$. 

Next we prove the first claim. Consider $u\in B^\theta_{p,p}(Z)$ solution of $(-\Delta_p)^{\theta}u =f$ and let $\widehat{u}$ be as in \eqref{eq:ET-go-home}, satisfying
\begin{equation}\label{eq:ET-go-home-1}
\int_{\overline\Om}|\nabla\widehat{u}|^{p-2}\nabla\widehat{u}\cdot\nabla \widehat{v}\, d\mu= \int_{\partial \Omega} \pip\, f\, d\nu
\end{equation}
for every  $\pip\in B^\theta_{p,p}(Z)$.  By Theorem \ref{new regularity} 
we have that 
 $\widehat u$ is H\"older continuous in $\overline \Omega$ with H\"older exponent $\frac{p+\alpha}{p-1}\in (0,1)$. Since $u$ is the trace of $\widehat u$ on $\partial \Omega$, then it shares the same H\"older regularity, thus completing the argument.
\end{proof}

In terms of the hypotheses needed from $f\in L^q(Z,\nu)$  to guarantee membership in the appropriate Morrey space, we note that by choosing $q>0$ such that
$1-\lambda=\tfrac{q(p-\Theta)-Q_\mu+\Theta}{q(p-1)}$, we get that the decay index is
$\alpha=-\tfrac{Q_\mu+(q-1)\Theta}{q}$, as desired.

\section{Comparison with existing literature in the Euclidean setting}

There is a vast literature concerning the study of the fractional $p$-Laplacian partial differential equations
in the Euclidean setting (see for instance \cite{BT, BLS,DK0,FRRO, GL} and the references therein).  When $p\neq 2$, 
the definition of fractional $p$-Laplacian used in most of these papers is different from ours, as it concerns minimizers of the 
Besov energy~\eqref{besov energy}, while in this paper we follow the approach in \cite{CGKS, CKKSS}, and study minimizers of the equivalent energy~\eqref{eq:ET-go-home}.

The purpose of this section is to illustrate how, despite using these different notions of fractional $p$-Laplacian, the 
sharp H\"older exponents for the regularity of solutions are the same.
We will consider two different non-homogeneous partial differential equations
\begin{equation}\label{PDES}
(-\Delta_p u)_E^s =f \text{ and } (-\Delta_p u)^s =f
\end{equation}
in a bounded Euclidean domain $\Omega\subset \R^n$ with $s\in (0,1)$, each involving a different notion of fractional 
$p$-Laplacian. The operator corresponding to the energy~\eqref{besov energy} is
\[
 (-\Delta_p u)_E^s(x) =2\lim_{\epsilon \to 0^+} \int_{\R^n\setminus B(x,\epsilon)}\frac{|u(y)-u(x)|^{p-2}(u(y)-u(x))}{\nu(B(x,d(x,y))\, d(x,y)^{s p}}\, dy,
 \]
where $\nu$ is the Lebegue measure. Interpreting the first equation in \eqref{PDES} in a weak sense, 
whenever $\varphi$ is a smooth function on $\R^n$ with compact support contained in $\Om$, we are required to have
\[
2\, \lim_{\epsilon\to 0^+}\int_{\R^n}\int_{\R^n\setminus B(x,\epsilon)}\varphi(x)\frac{|u(y)-u(x)|^{p-2}(u(y)-u(x))}{\nu(B(x,d(x,y))\, d(x,y)^{s p}}\, dy
\, dx=\int_\Om f(x)\, \varphi(x)\, dx.
\]

The partial differential equation that arises out of minimizing the energy~\eqref{eq:ET-go-home} is the following: 
For  $(x,y)\in \R^n \times \R^+$, consider weak solutions $u$ of the non-linear Neumann problem
\begin{equation}\label{eqn91}
\begin{cases}
\text{div}\bigg( y^a|\nabla u(x,y)|^{p-2} \nabla u(x,y)\bigg) =0 \text{ for } y>0 \text{ and } x\in \R^n\\
\lim\limits_{y\to 0^+} y^a |\nabla u(x,y)|^{p-2}  \partial_y u (x,y) = f(x) \text{ at }x\in \R^n,
\end{cases}
\end{equation}
where $-1<a<p-1$, and $\text{div}$ and 
$\nabla$ refer to the usual differential structure in the Euclidean domain $\R^n\times\R^+$
endowed with Lebesgue measure. In concordance with our notation, we denote by $\nu$ the $n$-dimensional Lebesgue measure on the boundary $\R^n=\partial(\R^n\times\R^+)$. If $u\in N^{1,p}(\R^n\times \R^+{, \mu})$ is a solution of this 
Neumann problem with weighted Lebesgue measure $d\mu(x,y)=y^ad\nu(x)\, dy$ on $\R^n\times\R^+$,
then its trace $Tu$
on 
the boundary $\R^n=\partial(\R^n\times\R^+)$ 
satisfies the fractional partial differential equation
\[
(-\Delta_p)^{s} Tu=f,
\]
with $s=\frac{p-a-1}{p}$.
This follows from \cite{CGKS} and from  the fact that in this setting
we can choose as the uniform domain $\Omega$ the space $(\R^n\times \R^+, d_{Eucl}, y^a d\nu\,d y)$, that is the upper half space endowed with the weighted measure $y^a d\nu\, d y$. 

\begin{remark}\label{remark:CSexample} 
For $p=2$ and $0<s<1$, when the restriction of the Lebesgue measure to $\Omega$ is doubling and satisfies a 
$2$-Poincar\'e inequality, the two versions of the fractional $2$-Laplacian in \eqref{PDES} are equivalent. See~\cite{CS, CGKS} for more details about the 
differences and similarities of the two notions.
Here $a=1-2s$, which means
$-1<a<1$, 
and the codimensionality exponent is $\Theta=2(1-s)=a+1$. 
Note that the optimal lower mass bound exponent for the boundary~\ref{eq:lower-mass-exp-1} is strictly smaller than the lower mass bound exponent for the entire domain~\ref{eq:lower-mass-exp-1} for some values of $a$. Indeed, the measure of a ball of radius $r$ and center $(x,y)\in \R^n\times \R^+$ is roughly $r^{n+1}(y+r)^a$. 
Let us consider  a ball $B_r$ in $\R^n\times\R^+$ with a center at $(x_0,y)$ and a radius $r$. 
When
 $0\le y\le r$, we have that $\mu(B_r)\approx r^{n+1+a}$, and when $y>r$, we have that 
$\mu(B_r)\approx r^{n+1}y^a$.

 Thus for any $y\ge 0, r>0$ we have $ \mu(B_r) \approx r^{n+1} \max\{ r,y \}^a$.
  It follows that if $0<r<R$ and the balls $B_r$ and $B_R$ are centered at the same point $(x_0,y)$, we obtain

\[
\frac{ \mu(B_r) }{ \mu(B_R) } \approx \left( \frac{ r }{ R } \right)^{n+1} \left( \frac{ \max\{y,r\} }{  \max\{y,R\} }\right)^{a}. 
\]
From this estimate we see that the optimal lower mass bound exponent is $Q_{\mu} = n+1+a = n+\Theta$, if $ a \ge 0 $, and $Q_{\mu}=n+1$ if $a<0$. However, for the balls centered at the boundary $\mathbb R^{n}\times \{0\}$, we always have   $\tfrac{ \mu(B_r) }{ \mu(B_R) } \approx \left(\frac{r}{R}\right)^{n+1+a}$ and thus the optimal lower mass bound for the boundary balls is
\begin{equation}\label{Qpartial}
Q^{\partial}_{\mu}=n+1+a=n+\Theta 
\end{equation}
and in particular $Q^{\partial}_{\mu}<Q_{\mu}$ when $s> \tfrac12$, which corresponds to having $a<0$. 
\end{remark}

Notice that for $p=2$, Theorem~\ref{nonlocal} together with Remark~\ref{rem:improvement} give exactly the same exponents as   Caffarelli-Stinga~\cite{CSt},
where the dimension $n$ of the space coincides, via \eqref{Qpartial},  with the natural lower mass bound dimension of $\nu$ here,
which is $Q_\mu^\partial-\Theta$. The fractional power $s$ in~\cite[Theorem~1.2]{CSt} is given 
by $s=1-\Theta/p=1-\Theta/2$.

\medskip

\begin{remark}
	Observe that \eqref{Qpartial} still holds true for all $a>-1$.  
	Given a choice of $p>1$ and $0<s<1$, we choose $\Theta=p(1-s)$, and subsequently we choose $a=\Theta-1$ in the formulation \eqref{eqn91}, yielding $-1<a<p-1$. It follows that with the choice $a=\Theta-1=p(1-s)-1$, \eqref{Qpartial} holds.
\end{remark}

Let us recall from \cite{BLS, GL, BT} the sharp H\"older exponents for solutions of the non-homogeneous fractional $p$-Laplacian partial
differential equation
\[
(-\Delta_p u)^s_E =f
\]
in a bounded Euclidean domain $\Omega\subset \R^n$ with $s\in (0,1)$:
For $f\in L^q(\Omega)$, with  \begin{equation}\label{euclidean range} q>q_0^E:=\frac{n}{ps},\end{equation} one has that 
$u$ is locally $\lambda_E$-H\"older continuous on $\Omega$ with 
\begin{equation}\label{euclidean exponent} 
\lambda_E= \min\Bigg\{ 1, \frac{1}{p-1} \bigg( sp - \frac{n}{q}\bigg) \Bigg\}.
\end{equation}
The authors of those papers also prove that this result is sharp if $sp\le (p-1) + \frac{n}{q}$. This means  that for $q>q_0^E,$
$\frac{spq-n}{q(p-1)}<1$ and $\epsilon>0$ there exists $f\in L^q_{loc}(\R^n)$ with a solution $u$ to 
$(-\Delta_p u)^s_E =f$ that is not $(\lambda_E+\epsilon)$-H\"older continuous.

\medskip

When it comes to the equation 
$$(-\Delta_p u)^s =f$$
in $(\R^n, dx)$ as formulated in \eqref{eqn91}, our main result Theorem~\ref{nonlocal} and Remark~\ref{rem:improvement}, together, yield that when $f\in L^q(Z,d\nu)$ with 
\[
q> q_0:=\frac{Q_\mu^\partial-\Theta}{p-\Theta} = \frac{Q_\mu^\partial-\Theta}{ps},
\] 
the solutions to \eqref{nh} are $\lambda$-H\"older continuous with 
\[
\lambda=\min\left\{1, \frac{1}{p-1}\left((p-\Theta)-\frac{Q_\mu^\partial-\Theta}{q}\right)  
\right\}.
\]

In comparing $q_0, \lambda$ with $q_0^E, \lambda_E$, we note that in view of \eqref{Qpartial} and of the fact that $s=1-\frac{\Theta}{p}$,
 they are the same,  i.e. $q_0=q_0^E$ and $\lambda=\lambda_E$.

\bigskip

	\medskip
	
We now discuss comparisons with the recent manuscript~\cite{BT}, which considers the 
problem in Euclidean domains $W$ with the restriction that the inhomogeneity data $f\in L^\infty(W)$. One of their main results is \cite[Theorem 1.1]{BT}, which states that local weak solutions 
with $f\in L^\infty(W)$ are $\lambda$-H\"older regular with $$\lambda= \min\Bigg\{ 1, \frac{sp}{p-1}\Bigg\}.$$
Using the hypothesis $f\in L^\infty(W)$, in our argument in Remark~\ref{rem:improvement} above, we see that
\[
\frac{1}{\mu(B(x,r))}\, \int_{B(x,r)}|f|\, d\nu\lesssim \Vert f\Vert_{L^\infty(\partial\Om,\nu)}\, r^{-\Theta};
\]
that is, we can choose in this case to have $\alpha=-\Theta$, provided that $\Theta>1$ (which corresponds to our
need to have $\alpha<-1$). This also is in accordance with the computations leading to~\eqref{bounded-case}, 
and so, if $p(1-\tau)+\tau<\Theta<p$, we obtain the same sharp estimate that~\cite[Theorem~1.1]{BT} claims.
Note that their choice of $s$ corresponds to $1-\Theta/p$ in our paper. Thus, in their calculation,
$sp/(p-1)=1$ corresponds to $\Theta=1$, the situation $sp/(p-1)<1$ corresponds to $\Theta>1$, and
$sp/(p-1)>1$ corresponds to $\Theta<1$. 

In the case that $\Theta<1$, Theorem~1.1 of~\cite{BT}
claims that the solution is locally Lipschitz continuous, whereas this is not possible in the more general
setting of metric measure spaces as even $2$-harmonic functions are at best guaranteed only
to be $\tau$-H\"older continuous, see for instance~\cite{KRS}.

\end{document}